\newtheorem{theo}{Theorem}[section]
\newtheorem{prop}[theo]{Proposition}
\newtheorem{rem}[theo]{Remark}
\newtheorem{lemma}[theo]{Lemma}
\newtheorem{definition}[theo]{Definition}
\newtheorem{cor}[theo]{Corollary}
\newtheorem{example}[theo]{Example}
\newtheorem{ass}[theo]{Assumption}
\numberwithin{equation}{section}
\newenvironment{proof}{\noindent {\bf Proof }}
{\hfill $\bullet$ \vspace{0.25cm}}
\newcommand{\lat}{\mathbb{Z}^d}
\def\comp#1{{#1}^{\rm c}}
\def\buildd#1#2{\mathrel{\mathop{\kern 0pt#1}\limits_{#2}}}
\newcommand{\supt}{\rm sp}
\def\sobre#1#2{\lower 1ex \hbox{ $#1 \atop #2 $ } }
\def\FF{{\mathcal F}}
\def\FF{{\mathcal F}}
\def\E{{\mathbb E}}
\def\P{{\mathbb P}}
\def\R{{\mathbb R}}
\def\Z{{\mathbb Z}}
\def\N{{\mathbb N}}
\def\cG{{\cal G}}
\def\cA{{\cal A}}
\def\cG{{\cal G}}
\def \e {{\epsilon}}
\def\ll{{\bar \Lambda_n}}
\def\sqr{\vcenter{
         \hrule height.1mm
         \hbox{\vrule width.1mm height2.2mm\kern2.18mm\vrule width.1mm}
         \hrule height.1mm}}                  
\def\square{\ifmmode\sqr\else{$\sqr$}\fi}
\def\cR{{\cal R}}
\def \e {{\epsilon}}
\def \g {{\gamma}}
\def \om {{\omega}}
\def\L{{\Lambda}}
\def\G{{\Gamma}}
\title{Neighborhood radius estimation  for Variable-neighborhood   Random Fields} 
\date{}
\author{ Eva L\"ocherbach, Enza Orlandi}
\date{February 16, 2011}
\begin{document}
\maketitle

\abstract We consider  random fields defined by finite-region
conditional probabilities depending on a neighborhood of the region
which changes with the boundary conditions.  
To  predict the symbols within  any finite region  
  it is necessary to   inspect a  random  number of neighborhood
symbols which might change according to the value of them. In analogy
to the one dimensional setting we call these neighborhood symbols   the {\it context} associated
to the region at hand.
This framework is a natural extension, to $d$-dimensional fields, of
the notion of variable-length Markov chains introduced by Rissanen
(1983) in his classical paper. We define an algorithm to estimate the
radius of the smallest ball containing the context based on a
realization of the field.  We prove the consistency of this estimator. 
 {Our proofs are constructive and yield explicit
upper bounds for the probability of wrong estimation of the radius of
the context.}

{\bf Key words:} Gibbs measures, random lattice fields,
Variable-neighborhood   random fields,  Context algorithm, consistent estimation.

{\bf AMS Classification:} Primary: 60D05, 62F12 Secondary: 60G55, 60G60, 62M40

\endabstract

\section{Introduction}
We consider random fields on $\Z^d$ with finite
state space 
defined by prescribing a family of conditional
probabilities indexed by finite subsets $\Lambda$ of $\Z^d .$
We assume that these conditional probabilities depend on a
finite neighborhood which changes according to the boundary conditions.
Contrary to standard Markov random fields which are defined by a
family of conditional probabilities depending on a {\it fixed }
neighborhood and not sensitive to the boundary conditions (fixed order
Markov dependence), the families of conditional probabilities
considered here are not restricted to a predefined uniform depth.
Rather, by examining the training data, a model is constructed that
fits higher order Markov dependencies where needed, while using lower
order Markov dependence elsewhere. We call these random fields {\it
  Variable-neighborhood random fields} or {\it Parsimonious Markov
  random fields}.

Adopting this parsimonious description means that we are aiming at
reducing information by finding the minimal neighborhood of a given
block of sites  able to predict the  states of the  sites within this block.
The neighborhood changes when the outside configuration of the field
changes, and the dependencies depend on the realization of the field.

 Applications of  Variable-neighborhood  random fields 
are in image analysis: in texture synthesis, computer 
vision and graphics. We
refer the interested reader to Efros and Leung (1999), \cite{EL}, for
the presentation of a non-parametric texture synthesis method. Texture is usually modelled as Markov random field,   
``composed of well defined texture primitives (texels) which are placed according to
some syntactic rules'' (Gidofalvi (2001), \cite{Gido}). Thus
a modelization as Variable-neighborhood random field where the width of the context
window may depend on the realization of the field is natural. 
Other possible applications of Variable-neighborhood random fields
are in neuroscience's and in general  in  spatial statistics, whenever  information reduction is needed.

The notion of Variable-neighborhood  random fields  has been inspired by Rissanen's Minimum Description
Length principle for Markov chains, see Rissanen (1983), \cite {R}.
Rissanen calls the relevant neighborhood of a site, i.e.~the sequence
of symbols needed to predict the next symbol, given a finite sample,
{\it context} of a site and proposes an estimator of the length of the
context. Since this seminal paper, there have been several
implementations and extensions of the method. We refer  to the book of Grunwald (2007),  
\cite {Gr}, and to a review
paper by Galves and L\"ocherbach (2008), \cite {GL},    for a   comprehensive introduction. 
 Results for the context algorithm can be found  for example   in Ferrari and Wyner (2003)  and  in  Galves, Maume-Deschamps and Schmitt  (2008), \cite {GMS}.    In this last paper, the rate of convergence of the context algorithm is established and non asymptotic error bounds implying the consistency of the estimator are obtained. All these results  are related to processes in dimension
one.  Our aim is to extend this method to more than one dimension and to define an estimator of
the context in the framework of random fields.

This requires to define a random field which can predict the symbol at a given
site by inspecting a ``random'' number of neighborhood
symbols which might change according to the value of them. In analogy
to the one dimensional setting we call this neighborhood, i.e.~the
subset of symbols needed to predict the symbol at the given site, the {\it context} of this site. 
 For such random fields we estimate the
radius of the context of a given site, i.e.~the radius of the smallest ball
containing the context of this site. It is enough to consider the
contexts for one site, since in our setting 
the one point specification uniquely determines the specification for any other set.  We apply a penalized
pseudo-likelihood method, first introduced by Besag (1975), \cite
{Be}, and developed by Csisz\`ar and Talata (2006), \cite{CT}, in
order to construct our estimator. Our estimator is a function of the
observed blocks or patterns appearing in the sample. It is based
on a sequence of local decisions between two possible values of the radius of the context, lumping them together whenever their corresponding one point conditional probabilities are similar. 
We propose an
estimator for any site within our observation window, depending on
its local neighborhood. Hence we deal with a family of estimators
indexed by the centers of observation patterns. For this family of
estimators, we give in Theorem \ref{theomain1} and Theorem \ref{theomain2} explicit  error-bounds for
the probability of over- and underestimation. These bounds are non
asymptotic with respect to the number of observed sites, i.e.~the size
of the observation window. As a consequence, we obtain the consistency
of the neighborhood radius estimator.
   
Our results are based on several deviation inequalities which are
interesting in its own right. They are collected in Sections \ref{section4} and \ref{section:5}. The
first part of them  (Section \ref{section4}) is based on results obtained by Dedecker (2001), \cite{dedecker}, on
deviation inequalities for random fields, the second part (Section \ref{section:5})  is a
rewriting of typicality results obtained by Csisz\`ar and Talata
(2006), \cite{CT}.   Csisz\`ar and Talata are only interested in
consistency and they do not give explicit upper bounds for the error probabilities.
 We want to control the error bounds, for any
fixed $n,$ and so we carry on their ideas into non-asymptotic
deviation inequalities.

We implement the estimates under the  requirement  that the one point conditional probabilities are strictly positive.  This is enough for the overestimation. 
To implement the estimates for the underestimation, we need to assume that Dobrushin's contraction condition holds, see  Dobrushin  (1968), \cite {D1} and \cite {D2},   and that  there
exists some finite order $L,$ unknown to the statistician, such that the random field is
Markov of order at most $L.$ In the language of context-trees this
means that we deal with finite trees only.

There is large number of papers devoted to parameter estimation for
Markov random fields when the structure of the interaction is known,
see for example Gidas (1993), \cite {gidas},  Comets (1992),  \cite {comets}, 
 Dereudre and Lavancier (2011), \cite{DL},  and many others.
Typically, the parameter estimation addresses the problem of
estimating parameters entering in determining the potential, but not
directly the conditional probabilities.  Quite recently, the
non-parametric problem of model selection has been addressed, i.e.~the
statistical estimation of the interaction structure, see for example
Ji and Seymour (1996), \cite {JS}. Csisz\`ar and Talata (2006),
\cite{CT}, propose to estimate the basic neighborhood of Markov random
fields and estimate the support of the neighborhood (i.e.~its
geometrical structure) which is relevant to determine the conditional
probabilities. In their framework this neighborhood does not depend on
the configuration, hence they work in a strict Markovian frame. 
In Galves et al. (2010), \cite {GOT},  a related problem has been studied.
The authors estimate
 for an Ising model having pairwise interactions of infinite range the pairs of interacting sites based on i.i.d. observations of the field.   Our
paper is not situated in the same framework. We do not address the
problem of estimating the geometrical structure of the contexts,
since this would require to introduce too many free parameters. We
deal with a problem which is simpler and more difficult at the same
time: we estimate only the radius of the basic neighborhood, but this
neighborhood varies when the configuration changes.  This last feature is the main difference from previous models which have appeared in the literature.

The paper is organized as follows.  In Section 2  we define the Variable-neighborhood  random fields, 
based on the prescription of a ``variable-neighborhood''-specification 
 and we provide examples.  In Section 3 we define the estimator of the radius of a single-site context
and formulate the main results.  In Theorem \ref{theomain1} 
we  give 
the   bound  on the probability of overestimation and in Theorem \ref{theomain2} the  bound  on the probability of   underestimation,
under suitable assumptions on the decay of correlations in the field.
In Section 4  we prove the deviation inequalities 
needed for   controlling 
 the underestimation   and in   Section 5  those needed for controlling the overestimation. 
In Sections 6 and 7 we
give the proof of the main results. We conclude with some final remarks in Section 8.  In Section 9,  the appendix,  we collect some mathematical tools needed along the way. In particular  we  prove a  relation between     single site contexts and contexts of finite sets of sites. 

\vskip0.5cm \noindent {\bf Acknowledgements.}  The authors thank
Antonio Galves for introducing them to the problem,    Lionel Moisan and the referees for helpful comments.  Enza Orlandi
thanks Antonio Galves and Roberto Fern\'andez for helpful discussions
at the starting of this project. She further acknowledges  enlightening discussions with Anton
Bovier,  Marzio Cassandro,  Stephan Luckhaus and Errico Presutti.   
  The authors have been partially supported by Prin07:
20078XYHYS (E.O.), ANR-08-BLAN-0220-01 (E.L.) and Galileo 2008-09
(E.L. and E.O.), GREFI-MEFI.
  
Ce travail a b\'en\'efici\'e d'une aide de l'Agence Nationale de la Recherche
portant la r\'ef\'erence ANR-08-BLAN-0220-01.

\section{Variable-neighborhood   random fields} \label{notation}

We consider the $d$ dimensional lattice $\Z^d$. The points $i \in
\Z^d$ are called sites, $\|i\|$ denotes the maximum norm of $i$,
i.e.~for $i = (i_1, \ldots , i_d ) , $ $\| i\| = \max ( |i_1| , \ldots
, | i_d | ) $ is the maximum of the absolute values of the coordinates
of $i$.  The cardinality of a finite set $ \Delta $ is denoted by $
|\Delta|.$ 
The notations $\subset$
and $\Subset $ denote inclusion and strict inclusion.  Subsets of
$\lat$ will be denoted with uppercase Greek letters.  If $\Lambda$ is a
finite set, we write $\Lambda\Subset\lat$.

A {\it random field} $X$ is a family of random variables 
indexed by the sites $i $ of the lattice, $\{X_i: i \in \Z^d\}$,  
where each $X_i$ is a random variable taking values in
a  finite set  $\cA$. 
 
We denote the  set of all possible 
configurations of the random field by
$$\Omega =\cA^{\Z^d},$$ 
where $\Omega$ is endowed with the product topology.
We adopt the following notational conventions.
We write $\omega_\Lambda\in   \cA^\Lambda$ 
for the restriction of the configuration $\omega $ to 
the subset $\Lambda .$ 
If $\Lambda = \{ i \} $ is a singleton, we shall write  
$ \omega(i) $ for $\omega_{\{ i\}}$.
Configurations defined by regions are factorized with
omitted
subscripts indicating completion to the rest of the lattice:
$\omega_\Lambda\eta_{\comp{\Lambda}}=\omega_\Lambda\eta$.  
We call \emph{local configurations} the elements of 
$\cup_{\Lambda\Subset\lat} \cA^\Lambda$.

We  identify  the random field  $ \{X_i: i \in \Z^d\}$ with  the coordinate maps 
$X_i$ by $X_i (\omega ) = \omega (i),$
for any $\omega \in \Omega ,$ and from now on we will use this {\it canonical version} of the
random field.  We  define  the  following $\sigma-$algebras:  For any $\Gamma \subset \Z^d, $ 
let 
$$ {\cal F}_\Gamma = \sigma \{ X_i : i \in \Gamma \} 
\mbox{ and }  {\cal F} = \sigma \{ X_i : i \in \Z^d \} .$$
In this setup a random field is just a probability measure on the product space $ ( \Omega,  {\cal F})$. 
This measure is
defined by local specifications. To  define them,   we recall the
following well-known notions in statistical mechanics, see Georgii (1988), \cite{geo88}.
  
\noindent
\begin{definition}
   A {probability kernel} on $(\Omega,\FF)$ is a function 
  $\Gamma(\,\cdot\mid\cdot\,) : \FF\times \Omega \longrightarrow [0,1]$ such that
 \begin{itemize}
  \item[(a)] $\Gamma(\,\cdot\mid\omega)$ is a probability measure on $(\Omega,\FF)$,
 for each $\omega \in\Omega$,
   \item[(b)] $\Gamma(A\mid\cdot\,)$ is a $\FF$-measurable function for each $A\in\FF$.
   \end{itemize}
  \end{definition}
   
  \noindent
  \begin {definition}    A  {specification} on $(\Omega, \FF)$  is a family 
    $ \g= \{\g_{\Lambda}\}_{\Lambda \Subset  \Z^d}  $ of probability kernels on $(\Omega,\FF)$ 
    such that  
  \begin{itemize}
  \item[(a)] For each $\Lambda\Subset\lat$ and each $ A \in \FF$, the
    function $ \g_{\Lambda}(A\mid\cdot\,)$ is $
    \FF_{\comp\Lambda}-$measurable,

\item[(b)] For each $\Lambda\Subset\lat$ and each $ A \in
  \FF_{\comp\Lambda}$, $ \g_{\Lambda}(A\mid\omega)= 1_{A}(\omega)$,

\item[(c)] For any pair of regions $ \Lambda$ and $\Delta$, with
  $\Lambda \subset \Delta\Subset\lat$, and any measurable set $A$,
 
  \begin{equation}
 \int \g_{\Delta}(d\omega '\mid \omega) \; \g_\Lambda(A\mid\omega ')
 \;=\; \g_{\Delta}(A\mid\omega)
 \label{eq:3}
\end{equation}
for all $\omega\in\Omega$.
\end{itemize}
  \end{definition} 
 
  \noindent
  \begin{definition}
  A probability measure $\mu$ on $(\Omega,\FF)$ is consistent with
    a specification $\g$ if for each $\Lambda\Subset\lat $ and for each
    $A\in\FF ,$
    \begin{equation}
  \int \mu(d\omega) \; \g_\Lambda(A\mid\omega)
  \;=\; \mu(A) .
  \label{eq:3.1}
  \end{equation}
\end{definition}
We now define the   Variable-neighborhood   random fields. 
  \noindent
  \begin{definition}  \label {D23} {\bf Variable-neighborhood  random field} \\
    Let $\mu $ be a probability measure on $(\Omega,\FF)$ consistent
    with the specification $\g .$ Then $\mu $ is a variable-neighborhood random field if for any $\Lambda \Subset \Z^d $ and
    for $\mu -$almost all $\omega_{\comp\Lambda} $ the following
   holds:
   there exists  $ \G = \G(\omega) \Subset \Z^d$ 
 such that   
 $$  \gamma_\Lambda(\cdot \mid\omega_{\comp\Lambda}) =
 \gamma_\Lambda(\cdot \mid \omega_{\Gamma }),$$ 
and 
for all $\tilde \G \subset \Z^d ,$ if $\gamma_\Lambda(\cdot \mid\omega_{\comp\Lambda}) =
 \gamma_\Lambda(\cdot \mid \omega_{\tilde \Gamma }),$ then
 $     \G \subset \tilde \G.$
We  denote 
 \begin{equation} \label {rm1} \supt_\Lambda (\omega) = \G(\omega)  \qquad \hbox {and} \qquad c_\Lambda (\omega) = \omega_{\supt_\Lambda (\omega )},    \end {equation}
 the restriction of $ \om$ on
  the set $\supt_{\Lambda }(\omega)$. 
   \end{definition}
   
    \begin{rem}   In Definition \ref  {D23} the requirement  that  
  if   $\gamma_\Lambda(\cdot \mid\omega_{\comp\Lambda}) =
 \gamma_\Lambda(\cdot \mid \omega_{\tilde \Gamma }),$ then
 $     \G (\omega) \subset \tilde \G $   allows to identify in an unambiguous way the random set  $ \Gamma (\omega)$ on which  $ \gamma_\Lambda(\cdot \mid\omega_{\comp\Lambda}) $ depends on. \end {rem}
  \begin{rem}
  We  call the  Variable-neighborhood   random fields    also  Parsimonious  Markov random fields.  Namely    $\gamma_\Lambda ( \cdot | \omega_{\comp\Lambda} ) $
  depends only on $ \omega_{ \supt_{\Lambda } (\omega )} $ and   we do not  need to inspect   the whole
  configuration $ \omega_{\comp\Lambda} $ in order to decide about the configuration of symbols within $\Lambda$. Indeed it is sufficient to inspect $ \omega_{ \supt_{\Lambda } (\omega )} .$
\end{rem}
  
   According to Definition \ref{D23} there might be a set of realizations
   of $\mu -$measure zero so that $| \supt_{\Lambda}(\omega)| =
   \infty$.  From now on we assume that {\it for all } $ \om \in
   \Omega$, $ \supt_{\Lambda}(\omega) $ is a finite set.  This means
   that for all $ \omega \in \Omega$, $ \gamma_\Lambda(\cdot
   \mid\omega_{\comp\Lambda})$ does only depend on a finite, but
   random neighborhood of $\Lambda$.  When for some $ \Gamma_0 \Subset
   \Z^d$ , $ \supt_{\Lambda}(\omega)= \Gamma_0$ for all $ \omega $,
   then $\mu $ (respectively, $X$) is a Markov field with basic
   neighborhood $\Gamma_0$. 
    Define the
   $\sigma-$algebra
   \begin{equation}\label{F1} \FF_ {\supt_{\Lambda} }= \left \{ A \in
       \FF: \forall \Gamma \subset \Z^d: \{\supt_{\Lambda} = \Gamma \}
       \cap A \in \FF_\Gamma \right \}. \end {equation}
   Then  for  all   $ \omega_\Lambda \in \cA^{\Lambda}$, 
   $   \gamma_\Lambda (\{\omega_\Lambda \} | \cdot)$ is a measurable function with respect 
   to  $ \FF_ {\supt_{\Lambda} }$.  
  
  In analogy to the terminology used for one dimensional variable length  Markov chains we can rephrase  the  Definition \ref{D23} using the concept of 
  family of contexts.   This   generalizes the notion of context trees    to more than one dimension. 
   \noindent
   \begin{definition} \label {D22}    {\bf The  family of contexts associated to the specification $\g$ }\\
  For $\Lambda \Subset \Z^d$ and $ \omega \in \Omega$ we denote  by  $ c^\g_{\Lambda} (\omega)=c_{\Lambda} (\omega) $,    see \eqref {rm1},  the $\Lambda-$context of $\omega$  associated to the specification $\g $. 
   We write $ \tau^{(\L)} \equiv  \tau^{(\L)}_{\g}= \{ c_{\Lambda}(\omega) , \omega \in \Omega
  \}$ for the { family of $\L$-contexts.} Under our assumptions,
  \begin{equation} \label {D25} \tau^{(\L)} \,\subset\,\bigcup_{\Gamma
      \Subset\lat\setminus \Lambda } \cA^\G .
  \end{equation} 
   \end{definition}
 We use the short-hand notation $ c_i ( \omega ) $ for $c_{\{i \} }
  (\omega ) $,  $\supt_i (\omega) $ for $\supt_{\{ i\}} (\omega )$
  and      $ \gamma_i (a | \omega ) $ for $\gamma_{\{i\}} ( \{a\} | \omega),$   $i \in \Z^d .$

   \noindent
   \begin{rem} It is immediate to verify from Definition \ref
     {D23} that the family $ \tau^{(\L)} $ has the following
     properties:

     \begin{itemize}
     \item 
       No element of $\tau^{(\L)}$ is restriction of any other element
       of $\tau^{(\L)}$: If $\eta_\Gamma$ and
       $\widetilde\eta_{\widetilde\Gamma}$ both belong to
       $\tau^{(\Lambda)}$,  $\Gamma \subset \widetilde\Gamma$ and 
       $ \eta_\Gamma = \widetilde\eta_\Gamma,$ then
       $\Gamma = \widetilde\Gamma .$
  
     \item 
       $\tau^{(\L)}$ defines a partition of $\cA^{\lat\setminus \L }$,
       that is, for each $\omega\in\cA^{\lat\setminus \L }$ there
       exists a unique $\G\subset\lat\setminus \L $ such that
       $\omega_\G
       \in\tau^{(\L)}$. 
     \end{itemize} \end{rem}
 
   \noindent 
   In this way the family of local specifications associated to $\mu $
   is
   \begin{equation}\label{s2}\gamma= \{ \gamma_\Lambda (\cdot|
     c_{\Lambda} (\omega)), \Lambda \Subset \Z^d; c_{\Lambda} (\omega)
     \}\end {equation}
   which leads to a more parsimonious description than the original 
   \begin{equation}\label{s1}
     \{ \gamma_\Lambda (\cdot|  \omega_{\comp\Lambda})), \Lambda \Subset \Z^d;   \omega_{\comp\Lambda} \}. \end {equation}
  
 We close the section  presenting three examples.
  In  the first   one we embed a renewal process in a  one dimensional Variable-neighborhood      random field.   This example has been    suggested by    Ferrari and Wyner (2003), \cite {ferrariwyner}. 
    In the second example we construct  a two dimensional Variable-neighborhood  random field by specifying 
a   variable-neighborhood interaction potential.   In the third example, we define a two dimensional Voronoi cell interaction model. This example has been
inspired by Dereudre, Drouilhet and Georgii (2010), \cite{DDG}. 
\noindent
\begin{example} \label{ex:ex1} 
We consider  $\cA= \{0,1\}$.
Let   $ \{ X_n: n \in \Z \}$ be a stationary alternating renewal process taking values in $\cA ,$ i.e. the times  when   the process  switches  between $1$ and $0$ or $0$ and $1$ are independent and identically distributed random variables.  
They have the same distribution as the   random variable $T$ defined through 
$$ \P [ T = j] = c_1 \rho_1^j +  c_2 \rho_2^j,  \quad 0 < \rho_2 < \rho_1<1, \quad j \in \N. $$ 
Set  $ \mu = \E [T].$ 
This process  can be   realized   as  a  one dimensional Variable-neighborhood    random field.  To this aim define  for $a $ and $b$ in $\Z$  
\begin{equation}\label{v1}  R_b (\omega) = \inf \{ n > b+1 : \omega (n) \neq \omega (b+1) \}, \quad 
L_a( \omega ) = \sup \{ n < a-1 : \omega (n) \neq \omega (a-1) \}, \end {equation}
where $ \omega \in \cA^{\Z}$.
 The family    of  local specifications $ \gamma_{\{  [a, b] \}} $  indexed by $[a, b] \subset \Z $,  
is  given as follows: 
 $$  \gamma_{\{  [a, b] \}} ( \cdot  \mid c_{ [a, b] } (\omega) ) =  \gamma_{\{  [a, b] \}} ( \cdot  \mid L_a (\omega) = - k , R_b(\omega) = l  ). $$
The context  $ c_{ [a, b] } (\omega)$  depends only on the neighbor sites   of $[a, b]$ which are all of the same type $0$ or $1 .$
  By standard calculus we obtain the following formulas for the one point specification  $  \gamma_{\{0\}} ( \cdot  \mid c_{0 } (\omega) ) .$ 
Write $\supt_0 (\omega ) = [ L_0( \omega ) , R_0 ( \omega ) ] \setminus \{ 0 \} .$ 
Then  
\begin{multline}  \label{final1}\gamma_{\{ 0\}} ( 1 \mid L_0( \omega ) = - k , R_0( \omega) = l, \omega (1) = \omega (-1) = 1 ) 
\\
= \frac{\left( c_1 \varrho_1^{l+k-1} + c_2 \varrho_2^{l+k-1} \right)  }{\left( c_1 \varrho_1^{l+k-1} + c_2 \varrho_2^{l+k-1} \right) +\left (  c_1  \rho_1^{k-1} +  c_2    \rho_2^{k-1} \right )\left(  c_1  \rho_1^{l-1} +  c_2    \rho_2^{l-1} \right )
\frac{  c_1 \varrho_1 + c_2 \varrho_2    }{\left(  \frac{c_1}{1- \varrho_1}\varrho_1 + \frac{c_2}{1- \varrho_2}\varrho_2  \right)^2    } 
  }
\end{multline}
and   
\begin{multline}  \label{final2}\gamma_{\{ 0\}} ( 1 \mid L_0( \omega ) = - k , R_0( \omega) = l, \omega (1) =0,  \omega (-1) = 1 ) 
\\= \frac{ \left(c_1 \varrho_1^k + c_2 \varrho_2^k  \right) \left(c_1 \varrho_1^{l-1} + c_2 \varrho_2^{l-1}  \right)  }{\left(c_1 \varrho_1^k + c_2 \varrho_2^k  \right) \left(c_1 \varrho_1^{l-1} + c_2 \varrho_2^{l-1}  \right) + \left(c_1 \varrho_1^{k-1} + c_2 \varrho_2^{k-1}  \right) \left(c_1 \varrho_1^{l} + c_2 \varrho_2^{l}  \right) } . 
\end{multline}
Due to the symmetry between $0$ and $1$, it is clear that with formulas \eqref{final1} and \eqref{final2}, we have  completely described  the 
one-point specification. 
 
In this example  the context  $c_{0 } (\cdot)$ is $\P-$almost surely finite, i.e.~there exists a subset of configurations of $ \P -$measure zero for which $| c_{0 } (\omega)| = \infty$. 
 \end{example}
 
We now give an example of a Variable-neighborhood random   field in dimension $d=2.$   In analogy with the   procedure  used in  statistical mechanics  we define a variable-neighborhood  specification by introducing  a   variable-neighborhood interaction potential.  \vskip0.5cm 
\noindent

\begin{example} \label{ex:ex2}
We consider $ \cA= \{-1,1\}$ and $d =2.$ In order to define the support  of  the    variable-neighborhood interaction potential
it is convenient to embed $\Z^2$ into $\R^2$. We partition $\R^2$ into  cubes   
of edge 1 centered  at $\Z^2$.  We say that two cubes are connected if they have one face in common. 
We denote by  $ \cR$  the set of all connected unions of such cubes, by $R $  an element of $\cR$ and by
$ \partial R $ the topological surface of  $R$. 
 
\noindent
We say that  $\G \subset \Z^2 $  is a polygon  if there exists $R \in \cR$ so that 
 $ \G= R \cap \Z^2$.   We denote by  $\partial \G = \{ i \in \G: d(i,  \partial R)\le \frac 12 \},$ 
where $ d(i, \partial R) = \inf \{ \| i - j \| : j \in \partial R \}$ and
$\| \cdot \| $ is the maximum norm introduced at the beginning of this section. Finally,
let $\hat \G $ be the interior of $ \G$,  $  \hat\G = \G \setminus  \partial \G$.    

We  say that $ \G$ is  a simple polygon if  $\partial \G $ is a  path in $\Z^2$ which does not
  cross itself  and  $ \hat \G \neq  \emptyset$.  Note that   $\partial \G $  can be   the union of disjoint  connected paths.  
  
\noindent
Given $ \omega \in \cA^{\Z^2} $ we define for each $i \in \Z^2$
$$   \Gamma^1_{i} (\omega)=    \cap \{ \Gamma \subset \Z^2 ,  \G  \hbox { simple polygon },   i  \in     \hat \Gamma,    \omega_ {\partial \Gamma} = 1 \; 
   \} . $$
In the above definition we do not require $ \Gamma$ to be finite. $ \Gamma$ could be equal to  $\Z^2$ in which case ${\partial \Gamma} = \emptyset$. 
In order to get a   bounded   interaction range, we  set 
$$\Gamma_i (\omega) = V_i (L ) \cap \Gamma_i^1 (\omega )\mbox{  and  } c^K_i (\omega) = \{ \omega _j: j \in \Gamma_i (\omega), \quad  j\neq i  \} ,$$
where
 $ V_i (L) = \{ j \in \Z^2 : \| i - j \| \le L \} .$ In the above definition, the superscript $K$ underlines the fact that
 the context $c^K_i (\omega)$ for the interaction  might not be  the same  as the context $c_i (\omega) $ associated to the specification.  
  Let    $ \{ J_n , n \in \N \} $ be a collection of real numbers
 and  $ | \Gamma_i
 (\omega)| $   the cardinality  of $ \Gamma_i (\omega) .$  We define 
 the  variable neighborhood interaction  $ \{ K^i (\omega),  i \in \Z^2\} $  as following:
   $$ K^i (\omega ) = K^i ( c^K_i(\omega))=   J_ {|\Gamma_i (\omega) | } \prod_{j  \in \Gamma_i (\omega) }  \omega(j).   $$
   By construction,
 the interaction is summable:  
\begin{equation}
\sup_{i \in  \Z^2 }  \sum_{j \in \Z^2  } \;
\sup_{\omega \,:\, \Gamma_j (\omega)\ni i}  |K^j ( \omega ) | \;<\; \infty\;. 
\label{eq:1}
\end{equation}
Denote 
$$ H_{\Lambda}  (\omega_{\Lambda}, \omega_{\Lambda^c})=  
- \sum_{ \{ i \in \Z^2 : \Lambda \cap \Gamma_i (\omega) \neq \emptyset
  \} } K^i (\omega). $$
  The Variable-neighborhood   random field  $\mu$ is determined by the 
   following    family of local specifications
 \begin{equation} \label {mars3} \g_\Lambda(\{\omega_{\Lambda}\} \mid\omega_{\Lambda^c}\,)=  
 \frac 1{Z^{\omega_{\Lambda^c}}} \exp\{-\beta H_{\L}(\omega_{\Lambda}, \omega_{\Lambda^c} )\} , \quad \quad \omega_{\Lambda} \in \cA^{\L},  \omega_{\Lambda^c}\in \cA^{\L_c}, \end{equation}  where
$$ Z^{\omega_{\Lambda^c}} =
\sum_{\om_\L \in \{-1,1\}^\Lambda} \exp\{-\beta H_{\L}(\omega_{\Lambda}, \omega_{\Lambda^c} )\}
 .  $$ 
The  family of contexts  $c_\L(\omega)= \omega_{sp_\L(\omega)}$ associated to  $ \{\gamma_\Lambda \}_{\L}$,   
 defined in  \eqref {mars3},   is   
   determined by  $c_i(\omega) $ for  $ i \in \Lambda $, therefore by the knowledge of $ \omega$ only on   $ \supt_i (\omega )$. By  Definition    \ref {D23} and by (\ref{mars3}) we have that
\begin{equation}\label{eq:oho2}
 \supt_{i} (\omega ) = \left[ \bigcup_{j \in \Z^2} \{ \Gamma_j (\omega ) : i \in \Gamma_j (\omega) \}
  \cup \bigcup_{j \in \Z^2} \{ \Gamma_j (\omega^i ) : i \in \Gamma_j (\omega^i) \}
\right] \setminus \{i\} ,
\end{equation} 
where $\omega^i ( j) = \omega (j) $ for
all $ j \neq i, $ $ \omega^i (i) = - \omega (i) .$  This formula  gives the relation between the support of the context of the specification and the support of the
interaction.  We show in the appendix 
that the following identity holds:
\begin{equation}\label{eq:oho}
 \supt_{i} (\omega ) =  \left ( \Gamma^1_i (\omega) \cap V_i(2L)\right ) \setminus \{i\}, \qquad    c_i(\omega)= \om_{\supt_{i} (\omega )} . 
\end{equation} 
 Note that the context  associated to the family of the constructed specification   $ c_i(\omega)$ is different from  $ c_i^K(\omega),$ the context associated to the interaction.  
 \end {example}
  
 We now give a second example of a Variable-neighborhood random field in dimension $d=2$ in which contexts are no more bounded. This example is inspired by a recent  paper of Dereudre, Drouilhet and Georgii (2010), \cite{DDG} in which    Gibbs point processes on $ \R^d $ with geometry dependent interactions  are considered.  Such processes can be  realized as  Variable-neighborhood random fields.  Compared to their work, our setup is  simpler  
since we do not work on $\R^2$ but  on the grid $\Z^2.$    \vskip0.5cm 
\noindent

\begin{example} \label{ex:ex3}
We consider $ \cA= \{0,1\}$ and $d =2.$ 
  For  $\om \in  \cA^{\Z^2}$ we denote by $ C(\om) = \{ i \in \Z^2 : \omega (i) = 1 \}.$    We embed $\Z^2 $ into $\R^2 .$  For any fixed $\omega \in \cA^{\Z^2} $ with $C(\om) \neq \emptyset $ and $ i \in  C(\om), $ let 
$$ Vor_i(\omega)  := \{ y \in \R^2 : \| i - y \|_2 \le \| k - y \|_2,\; \forall k \in  C(\om), \, k \neq i  \} $$ 
be the Voronoi cell associated to a point $ i \in  C(\om).$  
If   there exists no such  $ k \in   C(\om) $, then set $Vor_i(\omega)= \Z^2 \setminus \{i \}$.  We denote by   $\| \cdot \|_2 $ the  $L^2-$norm on $\R^2.$  
\noindent
For a bounded function $ \Phi : {\cal P } ( \R^2 )  \to \R $ defined on all subsets of $\R^2 ,$ we define for any $i \in C(\om),$
$$ K^i (\omega ) =  \Phi ( Vor_i (\omega )) . $$
For $i \notin  C(\om),$ we set  $ K^i (\omega )= 0 .$ When $C(\om) = \emptyset $ set    $ K^i (\omega )= 0 $ for all $i \in \Z^2$.  
Finally, let 
$$ H_{\Lambda}  (\omega_{\Lambda}, \omega_{\Lambda^c})=  
- \sum_{ \{ i \in \Z^2 : \, i \in  C(\om), \, \Lambda \cap Vor_i (\omega) \neq \emptyset
  \} } K^i (\omega). $$
Clearly, the range of interaction is not bounded in this case. We have to ensure that the interaction is summable. Since for any fixed $i \in \Z^2 $   and $k \in \N $ 
$$ card \{ j \in \Z^2 : i \in \Z^2 \cap Vor_j (\omega ) , \| i - j \|  = k \} \le  8 k,$$ 
 where    $ \| \cdot \|$ is the $L^\infty-$norm, see  at the beginning of Section 2,  it suffices to impose that for any  $ A \subset \R^2, $     $ \Phi ( A ) \le  \frac   C { diam (A)^{2+ \e}},$   for some constant $C$ and   $ \e>0.$  Then,    
\begin{equation} \label {eq:1aa}
\sup_{i \in  \Z^2 } \sup_\omega \sum_{j \in \Z^2 : j \in C( \omega ) , \, i \in Vor_j (\omega) } \;
  |K^j ( \omega ) | \;<\; \infty\;. 
\end{equation}
  The Variable-neighborhood   random field  $\mu$ is   then  determined by the 
   following    family of local specifications
 \begin{equation}  \g_\Lambda(\{\omega_{\Lambda}\} \mid\omega_{\Lambda^c}\,)=  
 \frac 1{Z^{\omega_{\Lambda^c}}} \exp\{-\beta H_{\L}(\omega_{\Lambda}, \omega_{\Lambda^c} )\} , \quad \quad \omega_{\Lambda} \in \cA^{\L},  \omega_{\Lambda^c}\in \cA^{\L_c}, \end{equation}  where
$$ Z^{\omega_{\Lambda^c}} =
\sum_{\om_\L \in \{0,1\}^\Lambda} \exp\{-\beta H_{\L}(\omega_{\Lambda}, \omega_{\Lambda^c} )\}
 .  $$ 

 \end {example}
  One can   generalize in a relatively straightforward way  the last  two examples to $d\ge 3$.

\vskip0.5cm 
 We  are  interested in estimating the  support of the context $\supt_\L(\omega)$ for a given set of sites $\Lambda
\Subset \Z^d $ and a given observation $\omega .$  
Proposition \ref {p1} of the Appendix shows     that
    $ \gamma_\Lambda(\,\cdot\mid c_\Lambda (\omega )\,) $ can be derived from the   one 
   point specification $ \gamma_i(\,\cdot\mid c_i (\omega )\,) $ and 
  that for $ \Lambda \Subset \Z^d$, we have 
\begin{equation}\label{eq:freiburg1}
  \supt_{\Lambda} (\omega)=  \cup_{\omega_\Lambda} \left ( \cup_{i \in \Lambda} \supt_{\{i \}}(\omega)\right) \setminus \Lambda.   
 \end{equation}
Hence, in order to estimate $\supt_\L (\omega ),$ it is sufficient to estimate the context for single sites, i.e.~$\supt_i (\omega).$ 
To implement the  estimation procedure   we need  translation covariant models.  
 \vskip 0.5cm \noindent For any fixed $i \in \Z^d,$ we denote by   $\tau_i: \Z^d \to  \Z^d$ the $i-$shift 
defined by $\tau_i(j)= i+j$. This naturally induces on $\Omega$ the $i-$shift 
$T_i: \Omega \to \Omega$ defined  by 
$$ (T_i \omega) (j) = \omega (\tau_i (j) ) = \omega (i+j)  \quad \forall j \in \Z^d.$$ 

\begin {definition}  \label {mars2}  
  A   Variable-neighborhood   random field   $\mu$ on $(\Omega,\FF)$, determined by a family of local specifications  $\{\gamma_{  \Lambda}\}_\L , $   
is   translation covariant   if  for all $\L \Subset \Z^d$ and 
    for  all $ \omega \in \Omega$ 
$$ \gamma_{  \Lambda} (\cdot |\omega)=  \gamma_{\tau_i\Lambda} (\cdot |T_{i}  \omega),\quad i \in \Z^d $$
where $\tau_i \Lambda= \Lambda +i$.  
\end {definition} 
  In the following we will consider only   translation covariant Variable-neighborhood   random fields.  This implies that $
\gamma_{i} (\cdot |c_i(T_i (\omega ) ))= \gamma_{0 } (\cdot |c_{0} (
\omega)) $. 
 
\section{Main Results and Estimation procedure}\label{section3}
\noindent
In Section 2 we introduced the notion of Variable-neighborhood   random
fields. Such a random field is completely determined by the one point specification. It
would therefore be interesting to estimate $\supt_{i }(\omega)$, i.e. the set of points in $ \Z^d$  which enables to determine the value of the   symbol  at the site $i$. 
This requires, however, to estimate too many unknown parameters. Therefore we
are less ambitious and estimate the radius of the smallest ball containing $\supt_{i}(\omega)$.  
For $\ell \geq 1$
and $i \in \Z^d,$ define
\begin{equation} \label {set1}  V_i (\ell ) = \{ j \in
  \Z^d : || i-j || \le \ell \}   \quad \hbox { and} \quad  V^0_i (\ell ) = V_i (\ell ) \setminus \{i\}   .  \end {equation}
We also write
$$ \partial V_i ( \ell ) = \{ j \in \Z^d : \| i - j \| = \ell \} .$$
Then we define the length of the context of site $i$ by  
\begin{equation}\label{e1}   l_i (\omega) = \inf  \{  \ell>0:     \supt_{i}(\omega) \subset V_i(\ell)  \} .   \end {equation}
Note that $ l_i (\omega)$ is a stopping time with respect to the filtration  $ (\cG^i_n)_n = (\FF_{ V_i(n) })_n$.

Recall that $ \omega \in \Omega = \cA^{\Z^d} $ stands for a generic configuration of the
field.  In order to distinguish  between    generic configurations and observed data,  we  
will   denote  the   observed data by $\sigma.$ Our statistical inference is based on observations of    the 
Variable-neighborhood   random field 
$\mu  $ over an increasing  and absorbing sequence of finite regions $ \Lambda_n \subset \Z^d ,$ i.e.~$ \Lambda_n \subset \Lambda_{n+1} \subset \Z^d $ for
all $n ,$ and for all $ \Lambda' \subset  \Z^d$, there exists $n$ such that $  \Lambda' \subset \L_n$. 
 
Hence, at step $n,$ the sample is $\sigma_{\Lambda_n},$ where $\sigma_{\Lambda_n}$ is the fixed realization of $\mu $
in restriction to $\Lambda_n.$ We will construct our
estimators based on sites within some security region $\ll  \subset \Lambda_n ,$  where 
\begin{equation}\label{e4}
  \ll = \{ i \in \Lambda_n : V_i ({k(n)})  \subset \Lambda_n \} \end{equation}
with 
\begin{equation}\label{e6} k(n) =  \left ( \log |\Lambda_n| \right )^{\frac1 {2d}}. \end{equation}

In order to estimate $l_i (\omega ),$ we have to compare 
the neighborhood configuration of site $i$ with   the neighborhood configurations   of different sites $j$ for all  $j \in \ll $.    To  do so  we  
define for any fixed $i \in \ll $ and any $ 1 \le \ell \le k(n),$   
\begin{equation}\label{eq:wk}
  X_i^\ell (\omega)= \{  X_i^\ell (\omega) (j)  = \omega ( i + j),  \quad j : 0 < || j || \le \ell\}, 
\end{equation} 
hence  $ X_i^\ell (\omega)$ is the configuration around $i$ in a box of edge $\ell$. 
In terms of the shift operator,   $X_i^\ell (\omega) = (\omega \circ \tau_i ))_{V_0^0 (\ell)} $, i.e. this is the restriction of $ T_i \omega $ to $V_0^0 (\ell ) .$ 
  We stress  that $X_i^\ell $ does not depend on $\omega (i),$ the center of the observation window, and this   is important  to  our purposes.
We shall use the short-hand notation  
$$ X_i^\ell (\omega) = \omega_i^\ell.  $$
For any $ 1  \le \ell \le k(n),$ for any fixed configuration $\eta \in \cA^{V^0_0 ( \ell ) }, $ let
\begin{equation}\label{Do.1}  N_n (\eta)   = 
 \sum_{ j \in \ll }  1_{\{ X_j^{\ell} = \eta  \} }   \end{equation}
be the total number of occurrences of $\eta $ within $\ll .$  Moreover, for any fixed value $a \in \cA ,$ we write 
\begin{equation}\label{Do.2}   N_n (\eta , a) =  \sum_{ j \in \ll }  1_{\{ X_j^{\ell} = \eta , X_j = a \} } .  \end{equation}
In particular for the observed data $ \sigma$, $ \sigma_i^\ell$ is the data observed around the site $i$ in a ball of radius $ \ell$,  $N_n ( \sigma_i^\ell )  $ 
is the total number of occurrences of the local pattern around $i$ within $\ll .$ By construction $ N_n (\sigma_i^\ell)  \ge 1$.
Note that  $ N_n (\sigma_i^\ell , a  )  $ could  be zero. 

Let $\gamma: \cA \times \cA^{V^0_0(\ell)} \to [0,1]  .$  $\gamma$ is interpreted as possible one-point specification of a hypothetical Markov random field for which the corresponding context is
contained in $ V_i (\ell)$.
For any site $i$, under the hypothesis that its context  is contained in    $ V_i (\ell),$ 
we define the {\it  pseudo-likelihood of $\gamma$} as follows:
 \begin{equation} \label {ps1}
 PL^{(i, \ell)}_n (\gamma)= \prod_{j \in  \ll, \;  X_j^{\ell} = \sigma _i^{\ell}  }  \gamma (X_j | X_j^{\ell})  = 
  \prod_{ a \in \cA}  \gamma ( a | \sigma_i^{\ell})^{ N_n ( \sigma_i^{\ell}, a)} ,
\end{equation}
 where we restrict the product to all sites $ j \in \ll $ in order to be sure that $ V_j (\ell ) $ is still contained inside the observation window $\Lambda_n .$ 
Maximizing \eqref {ps1}  with respect to $\gamma$ under the  constraint 
  $$ \sum_{ a \in \cA}  \gamma ( a | \sigma_i^{\ell}) =1 $$
 gives  the following  estimator of the one-point specification 
  \begin{equation}\label{eq:estimator}
    \hat p_n (a | \sigma_i^{\ell}) = \frac{N_n (\sigma_i^{\ell}, a)}{ N_n (\sigma_i^{\ell})}.  
  \end{equation}
Analogously, we can define for any fixed configuration $\eta \in \cA^{V_0^0 (\ell ) }, $
\begin{equation}\label{eq:estimatorbis} \hat p_n ( a|\eta ) =  \left \{ \begin {array}{ll} \frac{N_n (\eta, a)}{ N_n (\eta)} & \mbox{ if } N_n (\eta) > 0, \\
0 & \mbox{ otherwise.} \end {array} \right. 
\end{equation}

 \noindent
 \begin{rem} 
 Not all $\gamma$ satisfying $ \sum_{ a \in \cA} \gamma ( a
 | \sigma_i^{\ell}) =1 $ are possible one-point specifications; one
   point specifications have to satisfy additional conditions,  which  are collected in the appendix, see 
       \eqref {eq:cons},  and which  are not considered here.  However, we define the
      pseudo-likelihood also for $\gamma$ not satisfying these additional
   conditions.
  \end{rem}
 \medskip
%

Thus, given the sample  $ \sigma_{\L_n}$, the logarithm of the maximum  pseudo-likelihood   of $\g$ is   the following  quantity:
 \begin{equation}\label{eq:loglikelihood1}
\log  MPL_n (i,\ell)=     \sum_{ a \in \cA } N_n (\sigma_i^{\ell}, a) \log  \hat p_n (a| \sigma_i^{\ell}) . 
\end{equation}
The decision if for a given $i$ the context has radius $ \ell-1$
rather than $\ell $ is based on the Kullback-Leibler information.
We introduce
 \begin{equation}\label{eq:loglikelihood}
\log L_n (i,\ell)  =    \sum_{ v \in \cA^{\partial V_0(\ell)   } } N_n (\sigma_i^{\ell-1}v) D(\hat p_n (\cdot | \sigma_i^{\ell-1} v ), \hat p_n(\cdot | \sigma_i^{{\ell}-1} )),
\end{equation}
where we sum over all possibilities of extending $\sigma_i^{\ell -1} $ 
to a configuration $\sigma_i^{\ell-1}v$ of radius $\ell $ and where
$$ D(\hat p_n (\cdot | \sigma_i^{\ell-1} v ), \hat p_n(\cdot | \sigma_i^{{\ell}-1} ))=
 \sum_{ a \in \cA } \hat p_n (a| \sigma_i^{\ell-1} v ) \log \left[  \frac{\hat p_n (a| \sigma_i^{\ell-1} v )}{\hat p_n(a| \sigma_i^{{\ell}-1} ) }\right] 
$$
is the Kullback-Leibler information.
Note that $ \log L_n (i,\ell)$ is a function of $\sigma_i^{\ell - 1 },$ but not of $\sigma_i^\ell .$ 
We rewrite it as follows:
\begin{equation}\label{eq:loglikelihood3}
\log L_n (i,\ell)=   \sum_{j: \; X_j^{\ell-1} = \sigma_i^{{\ell}-1}  } \; \frac{1}{N_n (X_j^\ell) }\sum_{ a \in \cA } N_n (X_j^{\ell}, a) \log \left[  \frac{\hat p_n (a| X_j^{\ell})}{\hat p_n(a| X_j^{{\ell}-1} ) }\right] . 
\end{equation}
Finally note that  
\begin{equation}\label{eq:loglikelihood4}
\log L_n (i,\ell)=   \left [  \sum_{j : \;  X_j^{\ell-1} = \sigma_i^{{\ell}-1}  } \frac{1}{N_n (X_j^\ell) }  \log  MPL_n (j,\ell) \right ]- \log  MPL_n (i,\ell-1). 
\end{equation}
 Now we start from $ \ell = R_n$ and proceed successively from $\ell $ to $\ell - 1.$ The log likelihood ratio statistics $\log L_n (i, \ell ) $ will be basically equal to zero for all $ \ell > l_i (\sigma ) .$ The first range at which $\log L_n (i, \ell ) $ is significantly different from zero is a range such that 
$ \hat p_n (\cdot | \sigma_i^{\ell - 1 }) \neq  \hat p_n (\cdot | \sigma_i^\ell ) $ in which case it is
reasonable to suppose that $\ell =  l_i (\sigma ) .$  

Before formalising this intuition in the definition of the estimator, for technical reasons we have to introduce the following security diameter
\begin{equation}\label{eq:rn}
R_n=  \left [ \left (\log |\ll | \right )^{\frac 1 {2d}} \right ], 
\end{equation}
where $[\cdot]$ denotes the integer part of a number. Note however that 
$$ R_n / k(n) \to 1 \mbox{ as } n \to \infty ,$$
where $k(n)$ was defined in \eqref{e6}. 
We are now able to define the estimator of the context length function.
\noindent
\begin {definition}  {\bf The estimator} 
Given the observation $\sigma_{\Lambda_n} ,$  for any $i \in \ll$, see \eqref {e4},  the  estimator of
$l_i(\sigma )$, defined in   \eqref {e1},    is the following random variable
 \begin{equation}\label{eq:estimator2}
\hat l_n (i) =\hat l_n (i, \sigma ) = \min \{ \ell = 1, \ldots, R_n - 1 : \forall \, k > \ell , \; \log L_n (i, k ) \le pen (k , n)   \},
\end{equation}
whenever $\{ \ell = 1, \ldots, R_n - 1 : \forall \, k > \ell , \; \log L_n (i, k ) \le pen (k , n)   \} \neq \emptyset $.   Otherwise we set  $\hat l_n (i)= R_n$.  In the above definition, 
 \begin{equation}\label{eq:pen1}  pen (\ell , n  ) = \kappa |\cA|   |\cA|^{| \partial V_0(\ell) |   }  \log | \Lambda_n| , \end{equation} 
and $\kappa $ is a positive constant that can be chosen freely,
provided it is at least of the order given in \eqref{eq:kappa}.
\end {definition} 
In other words, the above estimator chooses the minimal length $\ell $ such that all sites which are
relevant to determine  the value of the  symbol  at site $i $ belong to a ball of radius $\ell .$  
Once we have estimated the context length function, the underlying context $c_i (\sigma )$ is then estimated by  
$$\hat c_{n,i} (\sigma) = \sigma_{ V_i^0 (\hat l_n (i))} ,$$
and the corresponding one point specification by $ \hat \gamma_{n,i} ( a | \sigma ) = \hat p_n (a | \hat c_{n,i} (\sigma) ) .$

\begin{rem}
\begin{enumerate}
\item In one dimension the above penalization  term  is 
independent of  $\ell, $ since in this case $ |\cA|^{| \partial V_0(\ell) |   }  = |\cA|^2 .$ This leads to a penalty term 
$$ pen (n) = \kappa | \cA|^3 \log | \Lambda_n| .$$
\item
 Once the statistician has determined the radius of the context $\ell = l_i (\sigma) $ by means of the estimator $\hat l_n (i), $ is
is possible, in a second step, to determine the geometry of the context, i.e. to estimate $c_i ( \sigma) $ itself. This can be done 
by adapting the estimator of Csisz\`ar and Talata (2006), \cite{CT}, to our setup where the penalty can be restricted to all shapes 
contained in $V_i ( \ell ) $ for $ \ell = \hat l_n (i) $. 
\end{enumerate}
\end{rem}

\subsection{Main results}
The estimator  $\hat l_n (i)$ depends on the penalization term,  \eqref {eq:pen1},  therefore on  the choice of the constant $\kappa$. 
  Choose $ \delta > 2^d \log
| \cA | \frac{3 e}{4 q_{min} } $ and   define  
\begin{equation}\label{eq:kappa}
  \kappa= \kappa(\delta)= 5^{d} \left(\frac{3}{2}\right)^{1/2} \delta .
\end{equation}
 For the estimator defined in this way the following theorems are our  main
 results.
 
\noindent
\begin{ass}\label{ass:ergod}
The local specification is positive. We define
\begin{equation}\label{e3}  q_{min} = \inf_{a \in \cA} \inf_{ \omega \in \Omega} \gamma_0 ( a  |  \omega) >0. \end{equation}
\end{ass}

\noindent
\begin{theo}\label{theomain1}  {\bf (Overestimation)}  Let $ \mu$ be a translation covariant Variable-neighborhood   random field  
for which  Assumption \ref{ass:ergod} holds.  For any 
$\e
  > 0 $   there
  exist $n_0= n_0(\e , \delta, q_{min} ) $ and $ c(\delta)= c(\delta , q_{min}
  ),$ so that for any $n \geq n_0$ the probability of overestimation is bounded by
  \begin{eqnarray} \label{eq:badupperbound} && \mu \left[ \exists i
      \in \ll : \hat l_n (i) > l_i (\sigma )\right] \le \nonumber
   \\
   && \quad  C(d)  (\log  | \ll | )^{\frac{d+1}{2d }}   \cdot  \exp \left(  -c(\delta)  \sqrt{   \log | \ll |  }  \right)  
   + C(d) \exp \left( -  |\ll | ^{ 1 -
        \e } \right) ,
\end{eqnarray}
where $C(d)$ is a positive constant depending only on the dimension and where $ \ll$ is given in \eqref {e4}.  
\end{theo}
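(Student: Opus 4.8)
\noindent The plan is as follows.

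\emph{Step 1: reduce to a union over patterns.} A factor $|\ll|$ in front of the error would be fatal, so I would first replace the union over sites by a union over local configurations. If $\hat l_n(i)>l_i(\sigma)$, then $l_i(\sigma)\le R_n-1$ and, by \eqref{eq:estimator2}, $\log L_n(i,\ell)>pen(\ell,n)$ for some $\ell$ with $l_i(\sigma)<\ell\le R_n$; for such $\ell$ one has $\supt_i(\sigma)\subset V_i(\ell-1)$, and I will call $\eta:=\sigma_i^{\ell-1}\in\cA^{V^0_0(\ell-1)}$ an \emph{$\ell$-good} configuration. Here the point is that, since $l_0$ is a stopping time for $(\FF_{V_0(n)})_n$ and $\supt_0$ does not involve the coordinate at $0$, one has $\{l_0\le\ell-1\}\in\FF_{V^0_0(\ell-1)}$; combined with translation covariance this forces \emph{every} site $j$ with $X^{\ell-1}_j=\eta$ to satisfy $l_j(\sigma)\le\ell-1$. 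By \eqref{eq:loglikelihood3}, $\log L_n(i,\ell)$ is a function $\tilde L_n(\ell,\eta)$ of $\eta$ and of the occurrence counts alone, and $N_n(\eta)\ge1$. Hence
\[
\mu\bigl[\exists\,i\in\ll:\hat l_n(i)>l_i(\sigma)\bigr]\ \le\ \sum_{\ell=2}^{R_n}\ \sum_{\eta\ \ell\text{-good}}\ \mu\bigl[\,N_n(\eta)\ge1,\ \tilde L_n(\ell,\eta)>pen(\ell,n)\,\bigr],
\]
a union of at most $R_n\,|\cA|^{|V^0_0(R_n-1)|}$ terms; since $R_n^{d}$ is of order $\sqrt{\log|\ll|}$, this cardinality is of the form $C(d)(\log|\ll|)^{(d+1)/(2d)}\exp(O(\sqrt{\log|\ll|}))$, which supplies the polynomial prefactor of \eqref{eq:badupperbound}.

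\emph{Step 2: a deterministic likelihood comparison.} For a fixed $\ell$-good $\eta$ put $p(a\mid\eta):=\gamma_0(a\mid\eta)$; by $\ell$-goodness, $\gamma_j(a\mid\sigma)=p(a\mid\eta)$ for every $j$ with $X^\ell_j=\eta v$, and $p(a\mid\eta)\ge q_{min}$ by Assumption \ref{ass:ergod}. Since $\hat p_n(\cdot\mid\eta)$ is the minimiser of $p\mapsto\sum_v N_n(\eta v)\,D(\hat p_n(\cdot\mid\eta v),p)$, taking $p=p(\cdot\mid\eta)$ and using $D(q,p)\le\sum_a(q(a)-p(a))^2/p(a)$ gives
\[
\tilde L_n(\ell,\eta)=\sum_v N_n(\eta v)\,D\bigl(\hat p_n(\cdot\mid\eta v),\hat p_n(\cdot\mid\eta)\bigr)\ \le\ \frac1{q_{min}}\sum_v\sum_{a\in\cA}\frac{Z_{\eta,v,a}^2}{N_n(\eta v)},
\]
with $Z_{\eta,v,a}:=N_n(\eta v,a)-p(a\mid\eta)N_n(\eta v)=\sum_{j\in\ll}\one_{\{X^\ell_j=\eta v\}}\bigl(\one_{\{X_j=a\}}-p(a\mid\eta)\bigr)$. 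The crucial structural observation is that every summand has conditional expectation $0$ given $\FF_{\comp{\{j\}}}$ (the factor $\one_{\{X^\ell_j=\eta v\}}$ is $\FF_{\comp{\{j\}}}$-measurable, and on $\{X^\ell_j=\eta v\}$ one has $\gamma_j(a\mid\sigma)=p(a\mid\eta)$ by $\ell$-goodness). Hence, for any ordering of $\ll$, $Z_{\eta,v,a}$ is a sum of martingale differences bounded by $1$ with small conditional variance. This conditional centering is exactly what replaces a mixing hypothesis, and it uses only positivity of the specification — which is why the overestimation bound needs no Dobrushin condition.

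\emph{Step 3: deviation inequalities and typicality.} It remains to prove that, off an event of the announced probability, $\tilde L_n(\ell,\eta)\le pen(\ell,n)$ for every $\ell\le R_n$ and every $\ell$-good $\eta$ with $N_n(\eta)\ge1$. I would split according to whether the relevant occurrence count is below or above a threshold $s_n$ of order $\log|\Lambda_n|$. The below-threshold contributions are bounded deterministically: from $D(\hat p_n(\cdot\mid\eta v),\hat p_n(\cdot\mid\eta))\le\log N_n(\eta)$ (and, when $N_n(\eta)>s_n$, from $\hat p_n(a\mid\eta)\ge q_{min}/e$ on the typicality event, giving $D(\hat p_n(\cdot\mid\eta v),\hat p_n(\cdot\mid\eta))\le\log(e/q_{min})$) they sum to at most $\tfrac12 pen(\ell,n)$ for a suitable $s_n$. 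The above-threshold contributions are controlled by applying the Bernstein/Freedman-type deviation inequalities of Section \ref{section4} to the martingales $Z_{\eta,v,a}$, together with the typicality estimates of Section \ref{section:5}, which bound the denominators $N_n(\eta v)$ and the predictable quadratic variations from below and above; with a confidence level of order $\sqrt{\log|\ll|}$ — the order forced by the cardinality of the union in Step~1, which is exponential in $2^d\log|\cA|\,R_n^{d}$ — one gets simultaneously over all $(\ell,\eta,v,a)$ a bound $Z_{\eta,v,a}^2/N_n(\eta v)\le C\,q_{min}^{-1}\log|\Lambda_n|$, whence these terms also contribute $\le\tfrac12 pen(\ell,n)$ as soon as $\kappa$ is at least of the order in \eqref{eq:kappa} — the hypothesis $\delta>2^d\log|\cA|\,\tfrac{3e}{4q_{min}}$ being precisely what lets the confidence level beat the union. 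The martingale deviations then fail with probability at most $C(d)(\log|\ll|)^{(d+1)/(2d)}\exp(-c(\delta)\sqrt{\log|\ll|})$, and the Csisz\`ar--Talata typicality event with probability at most $C(d)\exp(-|\ll|^{1-\e})$; summing the two yields \eqref{eq:badupperbound}.

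\emph{Main obstacle.} The delicate part is Step~3. Because only positivity is assumed, the counts $N_n(\eta)$ are \emph{not} concentrated by any elementary argument, so in place of a mixing condition one must use the combinatorial typicality results of Csisz\`ar and Talata — recast in non-asymptotic form in Section \ref{section:5} — to control all the random denominators $N_n(\eta v)$ and the predictable quadratic variations \emph{uniformly} over the exponentially (in $R_n^{d}$) many relevant patterns, while simultaneously tuning the confidence level of the deviation inequalities of Section \ref{section4} so that the union over those patterns is absorbed and the comparison with the penalty \eqref{eq:pen1} closes with the admissible constant \eqref{eq:kappa}. Reconciling the two quite different error scales — $\exp(-c(\delta)\sqrt{\log|\ll|})$ from the martingale part and $\exp(-|\ll|^{1-\e})$ from the typicality part — is what produces the two-term form of the bound.
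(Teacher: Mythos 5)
Your reduction to a union over $\ell$-good patterns, the replacement of $\hat p_n(\cdot\mid\eta)$ by the true specification via the maximizing property of the pseudo-likelihood, and the chi-square bound \eqref{eq:divergence} on the Kullback--Leibler terms all match the paper's argument (compare \eqref{eq:ub1}). The genuine gap is in your concentration step. You claim that, because each summand $\one_{\{X_j^\ell=\eta v\}}\bigl(\one_{\{X_j=a\}}-p(a\mid\eta)\bigr)$ is conditionally centered given $\FF_{\comp{\{j\}}}$, the sum $Z_{\eta,v,a}$ over \emph{all} $j\in\ll$ is ``for any ordering of $\ll$'' a sum of martingale differences. This does not follow: to apply the tower property at the $k$-th step you need the previously revealed terms to be $\FF_{\comp{\{j_k\}}}$-measurable, but for any earlier site $j_l$ with $0<\|j_l-j_k\|\le\ell$ the weight $\one_{\{X_{j_l}^\ell=\eta v\}}$ depends on $X_{j_k}$, so in the dense window $\ll$ the partial sums are not a martingale under any natural ordering, and neither Lemma \ref{le:bernstein} nor an elementary Hoeffding bound applies directly to $Z_{\eta,v,a}$ or to the counts $N_n(\eta v)$. (Note also that the deviation inequalities of Section \ref{section4} you invoke are the Dedecker-based ones requiring Assumption \ref{ass:mixing}, which is exactly what the overestimation theorem avoids; the martingale Bernstein inequality is Lemma \ref{le:bernstein} and is used in Section \ref{section:5}.)

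The paper closes precisely this gap by partitioning $\ll$ into the $(4R_n+1)^d$ sparse sublattices $\ll^k$: for centers in one sublattice the windows $V_j(\ell)$, $\ell\le R_n$, are pairwise disjoint, the indicators $\one_{\{X_j^\ell=\eta\}}$ involve no other center, and after conditioning on the configuration off the union of these windows the ordered partial sums do form a bounded-increment martingale to which Lemma \ref{le:bernstein} applies (Lemma \ref{le:1}); likewise Lemma \ref{le:g} uses a conditional Hoeffding bound on each $\ll^k$ to control the counts from below, which produces the $\exp(-|\ll|^{1-\e})$ term. The per-sublattice estimates are then recombined via Jensen's inequality in the proof of Theorem \ref{theo1}, and it is this recombination that produces the constant $\kappa(\delta)=5^d(3/2)^{1/2}\delta$ of \eqref{eq:kappa} and, after summing over $\|k\|\le 2R_n$ and $\ell\le R_n$, the prefactor $C(d)R_n^{d+1}\le C(d)(\log|\ll|)^{\frac{d+1}{2d}}$ of \eqref{eq:badupperbound} --- not the cardinality of the union over patterns, which is instead absorbed into the exponent through the requirement $\delta>2^d\log|\cA|\,\tfrac{3e}{4q_{min}}$, exactly as you anticipated. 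Without this sublattice (dependency-graph) decomposition, or an equivalent device, your Step 3 cannot be carried out as written.
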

\noindent
\begin{rem}
 To obtain an upper bound in
\eqref{eq:badupperbound}  summable in $n$, we need a fast increase of the sampling
regions of order for example
$$ \log |\ll | \sim ( \log n^{1 + \varepsilon}  )^2 ,$$
which requires faster increase than choosing $\Lambda_n = [-
\frac{n}{2}, \frac{n}{2} ]^d .$
\end {rem}
 For bounding  the  probability of underestimation we need an additional assumption.   To this aim  define
$$ r(i,j)= \sup_{\omega , \omega':  \omega_{ \{j\}^c} = \omega'_{ \{j\}^c}}  \frac 12 \|\gamma_{i} ( \cdot| \omega )- \gamma_{i} ( \cdot| \omega ')\|_{\ TV} $$
where $\|\cdot\|_{\ TV}  $ denotes the total variation norm. By translation covariance  $ r(i,j)= r(0,i-j)$.   We    denote

\begin{equation}\label{eq:betal}                             
\beta (\ell) =  \sum_{k \in \Z^d : \|i\| > \ell}   r(0,k). 
\end{equation}
 \noindent
\begin{ass}\label{ass:mixing}
We assume that  there exists  $L>0$ such that 
\begin{equation}\label{eq:fin}     r(0,i)=0  \quad \mbox{ for all }  \|i\| \ge L 
\end{equation} 
and
\begin{equation}\label{eq:erre}       r=  \sum_{i \in \Z^d \setminus \{0\}} r(0,i) <1. \end{equation}
\end{ass}

\noindent
\begin{rem}
  Condition (\ref{eq:fin}) implies that the observed
  random field is actually a Markov random field of order $L.$ The
  order $L,$ however, is unknown. We do not propose to estimate this
  unknown order $L.$ When passing to the parsimonious description
  (\ref{s2}), what we actually propose is to estimate, for every site
  $i,$ given the observation $\sigma ,$ the minimal order $l_i (\sigma)$ that we need
  in order to determine the specification at that site, given $\sigma
  .$ This is also called {\it Minimum Description Length} in the
  literature. However, if $l_i (\sigma) $ does not depend on the
  configuration, then our estimator naturally provides an estimator of
  $L.$
  
  Condition \eqref {eq:erre} is the Dobrushin condition which implies uniqueness of the measure $ \mu$, see Dobrushin (1968), \cite {D1}, \cite {D2}.
  
   For the Example \ref{ex:ex2}  and  Example \ref{ex:ex3},    thanks to the summability assumptions  \eqref{eq:1} and \eqref {eq:1aa}, there exists for both of them  a critical value of the temperature $\beta_c$ such that \eqref {eq:erre} is satisfied for all $ \beta < \beta_c .$   For the   Example \ref{ex:ex1} ,  condition \eqref {eq:erre} is verified. This  can be shown as in the paper by  Ferrari and Wyner (2003),  \cite{ferrariwyner}, page 23.   
  \end{rem}

\noindent
\begin{theo}\label{theomain2} {\bf (Underestimation)}
Let $ \mu$ be a translation covariant Variable-neighborhood   random field  
for which  Assumption \ref{ass:ergod} and Assumption  \ref{ass:mixing} hold. Then for any $\e
  > 0 $  there
  exists $n_0= n_0(\e , \delta, q_{min}, L) ,$ so that for any $n \geq n_0$ the probability of underestimation is bounded by
\begin{eqnarray} \label {M1} \mu \left[ \exists i \in \ll : \hat l_n
    (i) < l_i (\sigma) \right] & \le &  \exp \left( -  |
    \ll |^{1 - \e } \right) .
\end{eqnarray}
\end{theo}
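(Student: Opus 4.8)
The plan is to reduce the underestimation event to a deterministic ``good‑event'' failure which the deviation inequalities of Section \ref{section4} control, after isolating a purely analytic identifiability statement for the specification. \emph{Reduction.} Under Assumption \ref{ass:mixing} the field is Markov of order $L$, so $\supt_i(\sigma)\subset V_i^0(L)$ and $l_i(\sigma)\le L$ for every $i$ and every $\sigma$; hence for $n$ large $l_i(\sigma)\le R_n-1$ and, choosing $k=l_i(\sigma)>\hat l_n(i)$ in the definition \eqref{eq:estimator2}, the event $\{\hat l_n(i)<l_i(\sigma)\}$ implies $\log L_n(i,l_i(\sigma))\le pen(l_i(\sigma),n)$. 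Thus
\[
\mu\bigl[\exists\, i\in\ll:\ \hat l_n(i)<l_i(\sigma)\bigr]\ \le\ \mu\Bigl[\exists\, i\in\ll,\ \exists\,\ell\in\{2,\dots,L\}:\ l_i(\sigma)=\ell,\ \log L_n(i,\ell)\le pen(\ell,n)\Bigr],
\]
and it suffices to produce an event $G_n$, \emph{not depending on $i$}, with $\mu[G_n^{\rm c}]\le\exp(-|\ll|^{1-\e})$ on which $\log L_n(i,l_i(\sigma))>pen(l_i(\sigma),n)$ for all $i\in\ll$ simultaneously.

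\emph{Identifiability of the true depth, and the main obstacle.} By Dobrushin uniqueness and translation covariance $\mu$ is translation invariant and Markov of order $L$, so $\hat p_n(a\mid\eta)$ estimates $\bar p(a\mid\eta):=\mu(X_0=a\mid X_0^{|\eta|}=\eta)$. Fix $i$ with $l_i(\sigma)=\ell$. Since $c_i(\sigma)$ is supported in $V_i^0(\ell)$, any configuration agreeing with $\sigma$ on $V_i^0(\ell)$ has one‑point specification at $i$ equal to $\gamma_i(\cdot\mid c_i(\sigma))$, so $\bar p(\cdot\mid\sigma_i^\ell)=\gamma_i(\cdot\mid c_i(\sigma))$ exactly; whereas $l_i(\sigma)=\ell$ provides $j\in\supt_i(\sigma)$ with $\|i-j\|=\ell$, and minimality of $\supt_i(\sigma)$ forces the one‑point specification to change, by at least a model constant $\delta_0>0$ (finitely many witnesses because of order $L$), when $\sigma$ is flipped at $j$. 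The key lemma I would prove is that, combining this ``direct'' gap with the Dobrushin contraction \eqref{eq:erre} — which bounds by $\beta(\ell)$ the influence of the exterior of $V_i(\ell)$ on the one‑point specification, hence limits how far the averaging defining $\bar p(\cdot\mid\sigma_i^{\ell-1})$ over that exterior can pull it back to $\gamma_i(\cdot\mid c_i(\sigma))$ — there is a model constant $\Delta=\Delta(q_{min},L,r)>0$ with $\|\bar p(\cdot\mid\sigma_i^\ell)-\bar p(\cdot\mid\sigma_i^{\ell-1})\|_{\rm TV}\ge\Delta$ whenever $l_i(\sigma)=\ell$. This is the delicate step: one must show the direct gap is not swamped by the indirect perturbation, which is precisely where \eqref{eq:erre} (not merely finiteness of the Markov order) is needed, and one must obtain a single $\Delta>0$ uniformly over the finitely many possible context shapes.

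\emph{The good event and the deterministic bound.} Fix $\epsilon_0<\Delta/4$ and let $G_n$ be the intersection, over all patterns $\eta$ of radius at most $L$ (finitely many), of $\{\|\hat p_n(\cdot\mid\eta)-\bar p(\cdot\mid\eta)\|_{\rm TV}\le\epsilon_0\}$ and $\{N_n(\eta)\ge\tfrac12|\ll|\,\mu(X_0^{|\eta|}=\eta)\}$. On $G_n$, retaining only the term $v=\sigma_i^\ell$ in \eqref{eq:loglikelihood}, Pinsker's inequality together with $\|\hat p_n(\cdot\mid\sigma_i^\ell)-\hat p_n(\cdot\mid\sigma_i^{\ell-1})\|_{\rm TV}\ge\Delta-2\epsilon_0\ge\Delta/2$ and $\mu(X_0^{|\eta|}=\eta)\ge q_{min}^{|V_0^0(L)|}$ (Assumption \ref{ass:ergod}) give
\[
\log L_n(i,\ell)\ \ge\ N_n(\sigma_i^\ell)\,D\bigl(\hat p_n(\cdot\mid\sigma_i^\ell),\hat p_n(\cdot\mid\sigma_i^{\ell-1})\bigr)\ \ge\ \tfrac14\,|\ll|\,q_{min}^{|V_0^0(L)|}\,\Delta^2\ =:\ c_1|\ll| .
\]
Since $pen(\ell,n)$ is of order $\log|\Lambda_n|=o(|\ll|)$, this yields $\log L_n(i,\ell)>pen(\ell,n)$ for every $i\in\ll$ once $n\ge n_0$, so $\mu[\text{underestimation}]\le\mu[G_n^{\rm c}]$. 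Finally $G_n^{\rm c}$ is a union of $O(1)$ events, each asserting that an indicator sum $N_n(\eta)$ or $N_n(\eta,a)$ over $\ll$ deviates from its mean by a fixed fraction, with targets bounded below by positive model constants; the deviation inequalities of Section \ref{section4} bound each such probability, hence their finite union, by $\exp(-|\ll|^{1-\e})$ for $n\ge n_0(\e)$, which is \eqref{M1}. Verifying that those inequalities apply to the sums $N_n(\eta),N_n(\eta,a)$ under Assumption \ref{ass:mixing} with this exponent is routine given the results of Section \ref{section4}.
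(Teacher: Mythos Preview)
Your overall architecture matches the paper's: reduce underestimation to $\log L_n(i,l_i(\sigma))\le pen(l_i(\sigma),n)$, produce a deterministic lower bound $\log L_n(i,\ell)\ge c\,|\ll|$ on a good event, and control the complement via the Section~\ref{section4} inequalities. The difference---and the gap---lies in the identifiability step.

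You retain only the single summand $v=v_0:=\sigma(\partial V_i(\ell))$ in \eqref{eq:loglikelihood} and claim a uniform gap
\[
\|\bar p(\cdot\mid\sigma_i^{\ell})-\bar p(\cdot\mid\sigma_i^{\ell-1})\|_{\rm TV}\ \ge\ \Delta>0
\quad\text{whenever }l_i(\sigma)=\ell .
\]
This is not a consequence of the assumptions. You correctly observe $\bar p(\cdot\mid\sigma_i^\ell)=\gamma_i(\cdot\mid c_i(\sigma))$, but $\bar p(\cdot\mid\sigma_i^{\ell-1})=\sum_v w_v\,\bar p(\cdot\mid\sigma_i^{\ell-1}v)$ is a convex combination, and nothing prevents this average from coinciding with the particular extreme point $\bar p(\cdot\mid\sigma_i^{\ell-1}v_0)$ even though other $v$'s give different values. (A one--dimensional toy model with $\gamma_0(1\mid\omega(-1),\omega(1))$ taking values $0.3,0.4,0.4,0.5$ and $\mu(X_0=1)=0.4$ already illustrates the failure at $\ell=1$.) Your appeal to Dobrushin contraction does not help here: the averaging defining $\bar p(\cdot\mid\sigma_i^{\ell-1})$ is over sites in $\partial V_i(\ell)$, which are \emph{inside} the Markov range~$L$, so $\beta(\ell)$ is irrelevant to controlling how this average relates to the specific value at $v_0$.

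The paper avoids this by \emph{not} discarding the other $v$'s. It rewrites $\frac{1}{|\ll|}\log L_n(i,\ell)$ as a fluctuation part $\sum_v\Delta_n(\sigma_i^{\ell-1}v)-\Delta_n(\sigma_i^{\ell-1})$ (controlled by the inequality \eqref{eq:deltan2}) plus the deterministic signal
\[
D(i,\ell,\sigma)=\sum_{v}p(\sigma_i^{\ell-1}v)\,D\bigl(\bar p(\cdot\mid\sigma_i^{\ell-1}v),\,\bar p(\cdot\mid\sigma_i^{\ell-1})\bigr),
\]
which vanishes only when \emph{all} the $\bar p(\cdot\mid\sigma_i^{\ell-1}v)$ are equal. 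Since $\bar p(\cdot\mid\sigma_i^{\ell-1}v_0)=\gamma_i(\cdot\mid c_i(\sigma))$ and $l_i(\sigma)=\ell$ forces at least one other $v$ to yield a different value, $D(i,\ell,\sigma)>0$, and finiteness of the alphabet together with $l_i\le L$ gives a uniform lower bound $d_0>0$. The rest of your argument then goes through. The fix, in short, is to keep the full sum over $v$ and work with $D(i,\ell,\sigma)$ rather than the single KL term at $v_0$.
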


\noindent
\begin{rem}
\begin{enumerate}
\item The above results  are stated for all $ n \geq n_0$ where $n_0$
  depends on the (unknown) model parameter $q_{min} $ and on the
  interaction through $L.$ 
  It is possible to write down upper bounds 
  which hold for all $n, $ but then the bounds become more  complicated
   and depend on $q_{min} $ and on $L.$  We adopted the above
   way of writing in order to state the results in a more transparent  way.
  \item Note that the trade-off between the rates of the two kind of errors (exponential
convergence for the probability of underestimation in \eqref{M1} and (basically)
polynomial convergence of the probability of overestimation in \eqref{eq:badupperbound}) 
  is a typical feature
in problems of order estimation appearing already in the simpler problem
of order estimation for Markov chains, see e.g. the papers by Finesso et al. (1996),
  \cite{finesso},
and Merhav et al. (1989), \cite{merhav}.

This represents the usual trade-off between type one and type two errors
in statistical decision problems: Overestimation means that the estimate exceeds
the true order and that we choose models that include the true
data-generating mechanism. This choice is not optimal but does only lead to a higher cost.
On the other hand underestimation leads to a restriction to lower order models that do not
describe the observed data.

So it is desirable to have an exponential control on the probability of underestimation while
keeping some polynomial control on the probability of overestimation. 
\item The definition of our estimator depends on the parameter $\delta .$ This
plays an  important  role only for the  overestimation. Namely  it 
appears in the exponent of the upper bound through the constant
$$ c( \delta ) = \frac23 \frac{2 q_{min} \delta}{e } - 2^d \log | \cA| ,$$
(see end of the proof of Lemma \ref{le:1}).
To ensure the consistency of the estimator we need to choose $\delta $
sufficiently large, depending on the one-point specification and on $q_{min} $ such
that $c(\delta) > 0 .$ Therefore, our estimator is not universal, in the sense that for
fixed $\delta $ it fails to be consistent for any random field such that $c (\delta ) < 0 .$

This problem appears even in the simpler case of order
estimation for Markov chains, see for example Finesso et al. (1996), \cite{finesso}, 
and Merhav et al.
(1989), \cite{merhav}. As pointed out by Finesso et al. (1996), \cite{finesso}, 
it is not possible to have an exponential bound on the overestimation probability of an order estimator
without rendering it inconsistent, for at least one model, for the underestimation. 
\end{enumerate}
\end{rem}

\begin{rem}
  Let us finally compare our results in the case of dimension one to the results of 
Ferrari and Wyner (2003), \cite{ferrariwyner}.  They  consider
stationary chains taking values in a finite alphabet without imposing any a priori bound on the memory. 
Hence, they are dealing with infinite trees. They overcome this difficulty by approximating the possibly
infinite memory chain by a sequence of finite range Markov chains of growing order. The price 
to pay in order to deal with these general processes is to impose geometrically $\alpha -$mixing conditions     both
for the control of the over- and the underestimation.  

In comparison to their results, to control  the underestimation, we need a slightly 
 stronger  assumption.  We  require     geometrically $ \Phi-$mixing which implies   geometrically $\alpha -$mixing.
This  is crucial 
  to obtain Theorem \ref{theomain2}.   We use Condition (\ref{eq:fin}) as sufficient condition  to obtain  the  geometrically $ \Phi-$mixing.
 
  Condition (\ref{eq:fin}), which  implies that the random field is of finite range,    could probably be relaxed.
It should be possible to deal with infinite range models, provided
one finds   other  sufficient conditions implying   mixing. 

Note however, that  mixing implies automatically the uniqueness  of
the underlying measure $\mu .$ Hence, using this kind of technique always
implies that the Dobrushin uniqueness condition (\ref{eq:erre}) must be satisfied. 
There is some hope to deal with the underestimation even
in the case of phase transition, see Remark \ref{rem:phasetransition}. 

Concerning the control of the overestimation, we are able to deal with the general
long range case without  requiring mixing. Hence we can 
do better than Ferrari and Wyner (2003), \cite{ferrariwyner}, in this aspect.
  We need to impose  the   positivity condition on the specification,  see  Assumption  \ref{ass:ergod}.
Ferrari and Wyner do only impose positivity within each step of the canonical
Markov approximation, allowing these lower bounds to tend to
zero at a certain rate.  

\end{rem}

\section{ Deviation  inequalities  for   underestimation}\label{section4}

The deviation inequalities needed for the  underestimation are based on results obtained by Dedecker (2001),    \cite {dedecker},  on
exponential inequalities for random fields.  To adapt   these results to   our model   we need      Assumption  \ref{ass:ergod}  and  Assumption \ref{ass:mixing}. 
In the next subsection we present a preliminary deviation inequality based on the results of Dedecker (2001), \cite{dedecker}, and
then give in the following subsection the deviation inequalities
needed   to control the probability of underestimation. 

\subsection{Preliminaries}
Fix $\ell >0$.  For
a given configuration $\eta \in \cA^{V^0_0 (\ell)}, $ we define
\begin{equation}\label{MM2} p(\eta)= \mu (\{ X_i = \eta_i, \; \forall
  \; i \in V^0_0 (\ell )\} ) . \end{equation}
Recall that $N_n (\eta ) $ is the total number of occurrences of $\eta$ in the observation $\sigma_{\ll }.$
Then we get the following result which is an immediate consequence of Corollary 4 of Dedecker (2001), \cite {dedecker}.

\begin{prop}\label{prop:dedecker}    Under    Assumption  \ref{ass:ergod}  and  Assumption \ref{ass:mixing}
there exists a constant $c (d, L) $ depending only on the dimension and on the range $L,$ such that 
for any configuration $\eta \in \cA^{V^0_0 (\ell)}, $
\begin{equation}\label{eq:expodedeckerbis}
\mu \left(\Big| \frac{ N_n (  \eta ) }{ |\ll |} - p  (  \eta ) \Big| \geq \delta \right) \le e^{1/e} \exp \left( 
- \frac{ c(d, L)  |\ll | \delta^2 }{  ( 2 \ell )^{2d -1}  e } 
\right) .
\end{equation}
\end{prop}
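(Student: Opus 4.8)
The plan is to recognise the left-hand side of \eqref{eq:expodedeckerbis} as a deviation of a bounded, centered, stationary additive functional of the field, and then to feed this directly into Corollary~4 of Dedecker (2001), \cite{dedecker}; the role of Assumption~\ref{ass:ergod} and Assumption~\ref{ass:mixing} is precisely to supply the weak-dependence hypotheses of that corollary. First I would use that $\mu$ is translation invariant (which follows from translation covariance of the model together with the uniqueness guaranteed by Assumption~\ref{ass:mixing}) to note that $\mu(X_j^\ell=\eta)=p(\eta)$ for every $j\in\ll$, with $p(\eta)$ as in \eqref{MM2}. Setting $Y_j:=1_{\{X_j^\ell=\eta\}}-p(\eta)$, one then has
$$ \frac{N_n(\eta)}{|\ll|}-p(\eta)\;=\;\frac{1}{|\ll|}\sum_{j\in\ll}Y_j , $$
where each $Y_j$ is centered, bounded by $1$ in absolute value, and, by the definition \eqref{eq:wk} of $X_j^\ell$, measurable with respect to $\FF_{V_j^0(\ell)}$. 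Thus $(Y_j)_{j\in\lat}$ is a bounded, centered, stationary random field whose coordinates are localised in boxes of radius $\ell$, and we want a Gaussian-type tail bound for its partial sum over the finite set $\ll$.

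The substantive step is the control of the mixing coefficients required by Corollary~4. Condition \eqref{eq:fin} says precisely that $\mu$ is a Markov random field of range $L$, while \eqref{eq:erre} is Dobrushin's uniqueness condition with $r<1$; combined with the positivity in Assumption~\ref{ass:ergod}, the Dobrushin comparison estimates (see e.g. Georgii (1988), \cite{geo88}, Ch.~8) show that $\mu$ is $\Phi$-mixing with coefficients decaying exponentially at a rate depending only on $d$, $L$ and $r$. Since $Y_j$ is a function of $X$ through $V_j^0(\ell)$ only, the conditional covariances of $Y_j$ against its lexicographic past that enter the dependence coefficient of Corollary~4 are effectively supported on those sites $j'$ whose windows interact with $V_j(\ell)$ up to range $L$, i.e. on $\|j'-j\|\le 2\ell+L$; bounding each such contribution by a constant and using the exponential $\Phi$-mixing tail beyond this range, one checks that the weak-dependence variance proxy entering the corollary is at most $|\ll|\,(2\ell)^{2d-1}\,e/(2c(d,L))$ for a suitable $c(d,L)>0$ depending only on $d$ and $L$. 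Corollary~4 of \cite{dedecker} then yields an inequality of the form
$$ \mu\!\left(\Big|\sum_{j\in\ll}Y_j\Big|\ge t\right)\;\le\;e^{1/e}\exp\!\left(-\frac{c(d,L)\,t^2}{|\ll|\,(2\ell)^{2d-1}\,e}\right),\qquad t>0, $$
and choosing $t=|\ll|\,\delta$ gives exactly \eqref{eq:expodedeckerbis}.

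The only genuinely delicate point is the estimate in the second paragraph: one has to match the abstract dependence coefficient appearing in Dedecker's theorem with the concrete Markov-plus-Dobrushin structure of our field, and to track the polynomial factor in $\ell$ — the exponent $2d-1$ — through the (somewhat crude) count of overlapping windows $V_j(\ell)$ and the exponentially small contribution of distant windows. Everything else is routine bookkeeping, and this is why the result is stated as an essentially immediate consequence of \cite{dedecker}.
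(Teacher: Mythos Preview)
Your proposal is correct and follows essentially the same route as the paper: define the bounded local functional $Y_j=1_{\{X_j^\ell=\eta\}}$, invoke Corollary~4 of Dedecker for the deviation bound with denominator $4\beta_\ell e$, and then use Dobrushin comparison under Assumptions~\ref{ass:ergod}--\ref{ass:mixing} to control the dependence coefficient $\beta_\ell$ by $C(d,L)(2\ell)^{2d-1}$. The only place where the paper is more explicit than your sketch is that it isolates the mixing estimate as a separate lemma (Lemma~\ref{oct1}), proving $\Phi^\ell_{\infty,1}(n+2\ell)\le c^*\,|V_0^0(\ell)|\,e^{-kn}$ via the Dobrushin bounds in Presutti~\cite{P} and then summing $\beta_\ell\le |V_0^0(2\ell)|+\sum_{n\ge 2\ell+1}|\partial V_0(n)|\Phi^\ell_{\infty,1}(n)$ to extract the exponent $2d-1$; your informal ``count of overlapping windows plus exponential tail'' is exactly this computation, but you would want to carry it out in detail to justify the precise power of $\ell$.
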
   

The remainder of this section is devoted to show how this result can be obtained as a consequence of 
Corollary 4 of Dedecker (2001), \cite {dedecker}. We give this proof in detail since this shows at which extend  
Assumption \ref{ass:mixing} is needed. \\

\noindent \begin  {proof}   
For any $i,$ let 
$$ Y_i = 1_{\{ X_i^\ell = \eta  \}}  .$$
Then under $\mu ,$ $ \{ Y_i :  i \in \Z^d \}$ is a stationary random field. The associated filtration is defined
as follows. For any $\Gamma \subset \Z^d,$ let 
$$ {\cal G^\ell}_\Gamma = \sigma \{ Y_i , i \in \Gamma \} ,$$
and define the $\Phi-$mixing coefficient 
$$ \Phi ( {\cal G^\ell}_{\Gamma_1},  {\cal G^\ell}_{\Gamma_2}) = 
\sup \{ || \mu ( B | {\cal G^\ell}_{\Gamma_1}) - \mu (B) ||_\infty , B
\in {\cal G^\ell}_{\Gamma_2} \} .$$ 
Moreover, let
\begin {equation} \label {mars10}  \Phi_{\infty, 1 }^\ell(n)= \sup \{  \Phi ( {\cal G^\ell}_{\Gamma_1},  {\cal G^\ell}_{\Gamma_2}) : | \Gamma_ 2 | = 1 , 
dist (\Gamma_1, \Gamma_2 ) \geq n \} , \end {equation} 
where $dist (\Gamma_1,
\Gamma_2 )= \min \{ \|j-i\|, i \in \Gamma_1, j \in \Gamma_2 \}$.  Let
\begin{equation}\label{eq:betall}
\beta_\ell= 1 +  \sum_{n \geq 1}  \Phi_{\infty, 1 }^\ell(n) | \partial V_0(n)|.
\end{equation}
The quantity $\beta_\ell$ depends on $\ell$ through the filtration $\{{\cal G}^\ell_\Gamma, \Gamma \subset \Z^d  \}.$ To avoid confusion we warn the reader that $\beta_\ell$ defined in
\eqref {eq:betall} is a different quantity from $\beta (\ell)$ defined
in \eqref {eq:betal}, although related. 
Corollary 4 of Dedecker (2001), \cite {dedecker}, implies the following exponential
inequality
\begin{equation}\label{eq:expodedecker}
\mu \left(\Big| \frac{ N_n (  \eta ) }{ |\ll |} - p  (  \eta ) \Big| \geq \delta \right) \le e^{1/e} \exp \left( 
- \frac{ |\ll | \delta^2 }{ 4 \beta_\ell e } 
\right) .
\end{equation}
 We     estimate  $ \beta_\ell$ in  Lemma  \ref  {oct1}, stated below. 
 Then,  defining
 $ c(d,L) = \frac{1}{4 C(d,L)} $, where  $C(d,L)$ is the constant of  Lemma  \ref  {oct1},  we obtain \eqref {eq:expodedeckerbis}.  
 \end {proof}

 Assumption \ref{ass:mixing} is essential 
for proving  the following lemma.     

\noindent
\begin {lemma} \label {oct1}
  Under Assumption \ref{ass:mixing}, there exist constants $c^* = c^* (L)$ and $k = k(L)$
  such that
$$
\Phi_{\infty, 1}^\ell (n+2 \ell ) \le c^* \; |V_0^0(\ell)| e^{ - k n } , $$
and for $\beta_\ell $ defined in \eqref{eq:betall}, we have
\begin {equation} \label {feb1}  \beta_\ell   \le    
C (d, L) (2 \ell )^{2d-1} \end {equation}
 where $C(d,L)$ is a positive constant
depending on the dimension $d$ and on $L.$
\end{lemma}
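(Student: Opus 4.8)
The plan is to control the $\Phi$-mixing coefficient $\Phi_{\infty,1}^\ell(n+2\ell)$ of the indicator field $Y_i = 1_{\{X_i^\ell = \eta\}}$ by exploiting the fact that, under Assumption \ref{ass:mixing}, the underlying field $X$ is a Markov random field of range $L$ satisfying Dobrushin's uniqueness condition \eqref{eq:erre}. The key observation is that the $\sigma$-algebra ${\cal G}^\ell_{\Gamma_2}$ with $|\Gamma_2|=1$, say $\Gamma_2 = \{j\}$, is generated by the event $\{X_j^\ell=\eta\}$, which is ${\cal F}_{V_j(\ell)}$-measurable; hence it depends on the configuration only in the ball $V_j(\ell)$. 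Similarly ${\cal G}^\ell_{\Gamma_1} \subset {\cal F}_{\cup_{i\in\Gamma_1}V_i(\ell)}$. Thus if $dist(\Gamma_1,\Gamma_2)\ge n+2\ell$, the relevant regions for $\Gamma_1$ and for $\Gamma_2$ are separated by distance at least $n$. First I would therefore bound $\Phi({\cal G}^\ell_{\Gamma_1},{\cal G}^\ell_{\Gamma_2})$ by the ordinary $\Phi$-mixing coefficient of $X$ between the set $\cup_{i\in\Gamma_1}V_i(\ell)$ and the ball $V_j(\ell)$, reduced to distance $n$.

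Second, I would produce an exponential bound on the $\Phi$-mixing coefficient of the Markov random field $X$ itself. Under the Dobrushin condition \eqref{eq:erre} with finite range \eqref{eq:fin}, the Dobrushin interdependence matrix has $\ell^1$-operator norm $r<1$ and its entries vanish outside range $L$; the standard Dobrushin comparison theorem (see Georgii \cite{geo88}) then gives, for a single site $j$ and a configuration differing outside a set at distance $n$ from $j$,
$$ \|\gamma_{\{j\}}(\cdot\mid\omega) - \gamma_{\{j\}}(\cdot\mid\omega')\|_{TV} \;\le\; \sum_{m\ge n/L} r^{m} \;\le\; C\, e^{-kn} $$
for suitable $c^*=c^*(L)$, $k=k(L)$ depending only on $L$ (through $r<1$ and the geometry of range-$L$ neighborhoods). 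Summing this single-site estimate over the $|V_j^0(\ell)|$ sites on which the event $\{X_j^\ell=\eta\}$ depends, via a telescoping/triangle-inequality argument along a sequence of configurations that are modified one coordinate at a time, yields
$$ \Phi_{\infty,1}^\ell(n+2\ell) \;\le\; c^*\,|V_0^0(\ell)|\,e^{-kn}. $$

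Third, I would plug this into the definition \eqref{eq:betall} of $\beta_\ell$. Splitting the sum at $n=2\ell$: for $n\le 2\ell$ we bound $\Phi_{\infty,1}^\ell(n)\le 1$ and $|\partial V_0(n)| \le C(d)\,n^{d-1} \le C(d)(2\ell)^{d-1}$, giving a contribution of order $(2\ell)^d$; for $n>2\ell$ we write $n = m+2\ell$ and use the exponential bound, so that
$$ \sum_{m\ge 1} c^*\,|V_0^0(\ell)|\,e^{-km}\,|\partial V_0(m+2\ell)| \;\le\; c^*\,|V_0^0(\ell)|\sum_{m\ge1} e^{-km}\,C(d)(m+2\ell)^{d-1}, $$
and since $|V_0^0(\ell)| \le C(d)(2\ell)^d$ and the geometric series $\sum_m e^{-km}(m+2\ell)^{d-1}$ is bounded by $C(d,L)(2\ell)^{d-1}$, this whole term is at most $C(d,L)(2\ell)^{2d-1}$. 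Collecting both pieces gives $\beta_\ell \le C(d,L)(2\ell)^{2d-1}$, which is the claimed bound \eqref{feb1}.

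The main obstacle I expect is the second step: making the passage from the Dobrushin condition to a genuine \emph{exponential} decay rate $e^{-kn}$ for the $\Phi$-mixing (i.e.\ sup-norm/uniform) coefficient, rather than merely an algebraic rate or a decay in a weaker norm. One has to be careful that the Dobrushin comparison is applied to single-site specifications (so that the resulting bound is in total variation, which dominates the $\Phi$-coefficient for one-site $\Gamma_2$), that the finite range $L$ is used to convert "$m$ steps of the Dobrushin matrix" into "spatial distance $mL$", and that the telescoping over the $|V_0^0(\ell)|$ coordinates of the event does not destroy the exponential factor — it only produces the polynomial prefactor $|V_0^0(\ell)|$. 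Once this is in place, the summation in the third step is routine.
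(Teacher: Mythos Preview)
Your proposal follows essentially the same route as the paper: reduce the $\Phi$-coefficient of the indicator field to a comparison of finite-volume Gibbs measures $\gamma_{V_0(n+\ell)}(\cdot\mid\omega)$ with differing boundary conditions, invoke Dobrushin uniqueness with finite range $L$ to obtain exponential decay of this comparison in $n$, pick up the factor $|V_0^0(\ell)|$ from the number of sites on which the event $B=\{X_0^\ell=\eta\}$ depends, and then split the sum defining $\beta_\ell$ at $n=2\ell$. The paper cites Presutti (2009), Theorem~3.1.3.2 and Corollary~3.2.5.5, for the exponential decay step, whereas you cite Georgii's Dobrushin comparison theorem; these are equivalent inputs and the third step (the summation) is handled identically.

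One minor caution: the displayed bound in your second step is miswritten. The object $\gamma_{\{j\}}(\cdot\mid\omega)$ is the one-point specification, which for a range-$L$ Markov field depends only on $\omega_{V_j(L)}$ and hence coincides for $\omega,\omega'$ agreeing near $j$; the estimate as written is trivially $0$ for $n>L$. What you actually need to bound is the marginal at $j$ of the finite-volume Gibbs measure $\gamma_{V_0(n+\ell)}(\cdot\mid\omega)$ (equivalently $\mu(X_j\in\cdot\mid{\cal F}_{V_0(n+\ell)^c})$), and it is this quantity that Dobrushin comparison controls via $\sum_{m\ge n/L} r^m$. The factor $|V_0^0(\ell)|$ then arises not from a telescoping over configurations but directly from the Dobrushin comparison inequality, as a sum over the $|V_0^0(\ell)|$ coordinates on which $f=1_B$ has nonzero oscillation --- exactly the sum $\sum_{i\in V_0^0(\ell)} u_{V_0(n+\ell)}(i)$ that the paper obtains from Presutti. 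With this notational correction your plan and the paper's proof coincide.
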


\begin {proof}
For any $\Gamma \Subset \Z^d, $ let 
$$ \Gamma (\ell)= \{i \in \Z^d:  d(i, \Gamma)\le \ell \}.  $$  
We have that, whenever $|\Gamma|>1,$ 
    $$ {\cal G^\ell}_\Gamma = \sigma \{ Y_i , i \in \Gamma \}  \subset  \sigma \{ X_i , i \in \Gamma (\ell) \}
= {\cal F}_{ \Gamma (\ell) }  . $$
    When  $|\Gamma|=1$, assuming $ \Gamma= \{i \},$  
    $$ {\cal G^\ell}_\Gamma = \sigma \{ Y_i  \} \subset  \sigma \{ X_j  , j \in \Gamma (\ell)\setminus \{i\}   \} =   \sigma \{ X_j  , j \in  V_i^0 (\ell)  \}  . $$
By translational covariance, if is sufficient to set  $\Gamma_2= \{ 0 \} ,$
 $ |\Gamma_1| = \infty $ and  $dist (\Gamma_1, \Gamma_2 ) \geq n +2 \ell $, for $n \ge 1$.
Now, take $B = \{ X_0^\ell = \eta \}  $ for some fixed $\eta \in \cA^{V_0^0 (\ell ) }.$ Since $ {\cal G}^\ell_{\Gamma_1} \subset  {\cal F}_{ V_0 (n + \ell )^c } $ and $ \mu ( B | {\cal G}^\ell_{\Gamma_1} ) = 
\mu ( \mu ( B | {\cal F}_{ V_0 (n + \ell )^c } ) | {\cal G}^\ell_{\Gamma_1} ), $ in order to bound $\Phi_{\infty, 1}^\ell ( n + 2 \ell ),$ it is sufficient to bound
$$ || \mu ( B | {\cal G}^\ell_{\Gamma_1} ) - \mu (B) ||_{\infty } \le 
  || \mu ( B | {\cal F}_{ V_0 (n + \ell )^c } ) - \mu (B)  ||_{\infty }  .$$
But 
$$  \mu (B) = 
 \mu (\mu ( B | {\cal F}_{ V_0 (n + \ell )^c } )) .$$
Hence, using the specification $\gamma $ defined in \eqref{eq:3} and \eqref{eq:3.1}, by   definition \eqref {mars10} we have 
\begin{equation}  \begin {split} \label {eq:diff}
& \Phi^{\ell}_{\infty, 1} (n+ 2\ell ) 
\\
&  \le  \sup_\omega 
\left\{ \int d \mu (\omega ') \left[ | \gamma_{ V_0 (n + \ell )} ( B | \omega ) - 
 \gamma_{ V_0 (n + \ell )} ( B | \omega' ) |\right] \right\} \\
&   \le \sup_{\omega , \omega '} \left[ |\gamma_{ V_0 (n + \ell )} ( B | \omega ) - 
 \gamma_{ V_0 (n + \ell )} ( B | \omega' )| \right] \\
&  \le \sup_{ \omega (V^0_0 (\ell)), \omega ( V_0 ( n + \ell )^c), \omega ' ( V_0 ( n + \ell )^c)}
|\gamma_{ V_0 (n + \ell )} ( \omega (V^0_0 (\ell))  | \omega ) - 
 \gamma_{ V_0 (n + \ell )} (  \omega (V^0_0 (\ell)) | \omega' ) | .
\end {split} \end{equation}
To control this last term Assumption  \ref {ass:mixing}  is essential. Indeed, we need to show that, uniformly on boundary  conditions outside  $V_0 (n + \ell )$,
  \eqref  {eq:diff} is  exponentially small in $n$.    Applying 
  Theorem 3.1.3.2 of Presutti (2009), \cite{P}, we obtain the following. There exists a function $u_{  V_0 (n+ \ell )} : \Z^d \to \R_+ $ such that
\begin{multline}
\sup_{ \omega (V^0_0 (\ell)), \omega ( V_0 ( n + \ell )^c), \omega ' ( V_0 ( n + \ell )^c)}
|\gamma_{ V_0 (n + \ell )} ( \omega (V^0_0 (\ell))  | \omega ) - 
 \gamma_{ V_0 (n + \ell )} (  \omega (V^0_0 (\ell)) | \omega' ) |
   \le \\
\sum_{i \in V^0_0 (\ell) } 
u_{  V_0 (n+ \ell )} (i). 
\end{multline}
Moreover by Corollary 3.2.5.5.~of Presutti (2009), \cite{P}, under  \eqref {eq:fin}, there exist  $c^*= c^*(L)$ and $k=k(L)$ so that 
\begin{equation}\label{eq:er2} u_{  V_0 (n + \ell )} (i) \le  c^* e^{-k d(i, V_0(n+ \ell )^c)}, i \in  V^0_0(\ell) .
\end{equation}
Therefore, we have
$$
\Phi^\ell_{\infty, 1} (n+2 \ell ) \le    c^* e^{-k n }  |V_0^0(\ell)|   ,$$
and thus
\begin{equation}     \begin {split} 
\beta_\ell & \le   | V^0_0 (2 \ell ) | + \sum_{ n \geq 2 \ell +1} | \partial V_0 (n)| \Phi^\ell_{ \infty, 1 } (n) \\
& \le   ( 4 \ell )^d  + c^*  (4 \ell )^d  \sum_{ n \geq 2 \ell +1} n^{d-1} e^{ -k (n- 2\ell)} .  \\
  \end {split}  \end{equation}
  Immediately one   gets \eqref  {feb1}. 
\end{proof}

\begin{rem}\label{rem:phasetransition}
In Proposition \ref{prop:dedecker}  we obtain an  exponential rate of convergence in the ergodic theorem.  It is very likely that to our purposes polynomial or sub-exponential  rates of  convergence will be enough. This would allow to get the control for the probability of  underestimation also 
 in the regime of phase transition.  
This lies, however, outside the scope of the present paper. \end{rem}
 
\subsection{Deviation inequalities }
We are now able to state the deviation inequalities needed to control
the probability of underestimation. They are consequences of Proposition \ref{prop:dedecker} 
 and follow ideas of Galves and Leonardi
(2008), \cite {GL}.  Before doing so, we define for any $a \in \cA$, $\eta \in
\cA^{V^0_0 (\ell)}, $
\begin{equation}\label{LL2} p ( a | \eta ) =\frac{p( (a,\eta))}{p
    (\eta) }= \frac{\mu (\{ X_0 = a, X_i = \eta_i, \; \forall \; i
    \in V^0_0 (\ell ) \}) }{\mu (\{ X_i = \eta_i, \; \forall \; i \in
    V^0_0 (\ell ) \})}.\end{equation}
By Assumption \ref {ass:ergod} we have that 
for any given configuration $\eta \in \cA^{V^0_0 (\ell)}, $ 
$$ p(\eta)  \geq q_{min}^{(2l)^d } , $$
and
$$ p  ( a | \eta )  \geq q_{min } .$$
We are interested in configurations   having support in a ball of radius at most $L.$ Hence, writing
\begin{equation}\label{eq:encorehorrible}
\alpha_0  = \inf_{\ell \le L} \inf_{a\in \cA , \eta\in \cA^{V_0^0 (\ell) }} \{ p(a | \eta), p (\eta) \}  ,
\end{equation}
we obtain that
\begin{equation}\label{LE.1} \alpha_0  \geq q_{min}^{(2L)^d}.
\end{equation}
We define the following quantity
\begin{equation}\label{eq:deltan}
\Delta_n (  \eta ) = \sum_{ a \in \cA } \left(  \frac{N_n (\eta, a )  }{| \ll | } \log \hat p_n ( a | \eta ) 
- p ( ( \eta, a)) \log p ( a | \eta ) \right),   
\end{equation}
where $\hat p_n ( a | \eta )$ is the quantity defined in
(\ref{eq:estimatorbis}).  We obtain the following deviation
inequalities.

\noindent
\begin{cor} Let $ \mu$ be a translation covariant Variable-neighborhood   random field  
for which  Assumption \ref{ass:ergod} and Assumption \ref{ass:mixing} hold. 
Let $t > 0$, $\ell \le L$ where $L$ is given in \eqref
  {eq:fin}, let $\eta \in \cA^{V^0_0 (\ell )} $, $\hat p_n ( \cdot |
  \eta ) $ defined in (\ref{eq:estimatorbis}), $ p(\cdot | \eta) $ in
  \eqref {LL2} and let $\Delta_n ( \eta )$ as defined in \eqref
  {eq:deltan}.  Then there exists a constant $C(d,L) $ depending only on dimension and on   $L$ such that
  \begin{equation}\label{eq:ineq2}
    \mu \left(  \left|  \hat p_n ( a |  \eta) - p( a |  \eta ) \right|  > t \right) \le 2 e^{1/e} exp \left(- C(d,L) \frac{| \ll| t^2 \alpha_0  }{ 4    e } \right) , \quad \forall a \in \cA,   
  \end{equation}
  \begin{equation}\label{eq:deltan2}
    \mu \left( \left| \Delta_n (  \eta ) \right|  > t \right) \le 3 | \cA | e^{1/e} \exp \left( - C(d,L) \frac{ | \ll |  (t\wedge t^2  ) \alpha^2 _0     }{8 | \cA|^2  ( \log^2  \alpha_0     \vee 1 )  e } \right),
  \end{equation}
  where $\alpha_0 $ is given in \eqref {eq:encorehorrible} and
  estimated in \eqref {LE.1}.

\end{cor}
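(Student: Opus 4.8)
The plan is to deduce both deviation inequalities from Proposition~\ref{prop:dedecker} by writing the quantities of interest as functions of the empirical frequencies $N_n(\eta)/|\ll|$ and $N_n(\eta,a)/|\ll|$, and then controlling how sensitive these functions are to the fluctuations of the frequencies around their means $p(\eta)$ and $p((\eta,a))$. Since $\ell\le L$, we know from \eqref{LE.1} that $p(\eta)\ge q_{min}^{(2L)^d}=\alpha_0>0$ and $p(a|\eta)\ge q_{min}\ge\alpha_0$, so on the (overwhelmingly likely) event that $N_n(\eta)/|\ll|$ is within, say, $\alpha_0/2$ of $p(\eta)$, the denominators in $\hat p_n(a|\eta)=N_n(\eta,a)/N_n(\eta)$ stay bounded away from zero, which is what makes the ratios Lipschitz.

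First I would prove \eqref{eq:ineq2}. Write
$$
\hat p_n(a|\eta)-p(a|\eta)=\frac{N_n(\eta,a)/|\ll|-p((\eta,a))}{N_n(\eta)/|\ll|}
+p(a|\eta)\,\frac{p(\eta)-N_n(\eta)/|\ll|}{N_n(\eta)/|\ll|},
$$
so that on the event $\{|N_n(\eta)/|\ll|-p(\eta)|\le \alpha_0/2\}$ one has $N_n(\eta)/|\ll|\ge\alpha_0/2$ and hence
$$
|\hat p_n(a|\eta)-p(a|\eta)|\le \frac{2}{\alpha_0}\Bigl(\bigl|N_n(\eta,a)/|\ll|-p((\eta,a))\bigr|
+\bigl|N_n(\eta)/|\ll|-p(\eta)\bigr|\Bigr).
$$
Thus $\{|\hat p_n(a|\eta)-p(a|\eta)|>t\}$ is contained in the union of $\{|N_n(\eta,a)/|\ll|-p((\eta,a))|>\alpha_0 t/4\}$, $\{|N_n(\eta)/|\ll|-p(\eta)|>\alpha_0 t/4\}$ and $\{|N_n(\eta)/|\ll|-p(\eta)|>\alpha_0/2\}$; applying Proposition~\ref{prop:dedecker} (noting that $N_n(\eta,a)$ is the occurrence count of the configuration $(\eta,a)$ of radius $\ell\le L$, so the same estimate applies with $2\ell$ replaced by a quantity bounded in terms of $L$, which is absorbed into $C(d,L)$) to each piece, and using $t\le 1$ so that the $t^2$ term dominates $t^2\alpha_0^2/16$, gives the stated bound with a suitable $C(d,L)$ after relabeling constants and absorbing the small additive terms into the constant $2$.

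Next, for \eqref{eq:deltan2}, the idea is to split $\Delta_n(\eta)$ into a contribution from the fluctuation of $N_n(\eta,a)/|\ll|$ against $p((\eta,a))$ and a contribution from $\log\hat p_n(a|\eta)-\log p(a|\eta)$. Writing
$$
\Delta_n(\eta)=\sum_{a\in\cA}\Bigl[\bigl(N_n(\eta,a)/|\ll|-p((\eta,a))\bigr)\log\hat p_n(a|\eta)
+p((\eta,a))\bigl(\log\hat p_n(a|\eta)-\log p(a|\eta)\bigr)\Bigr],
$$
one bounds $|\log\hat p_n(a|\eta)|\le|\log\alpha_0|\vee 1$ on the good event (since $\hat p_n(a|\eta)\in[\alpha_0/ (2),1]$ there, up to constants), and uses $|\log x-\log y|\le |x-y|/\min(x,y)$ together with \eqref{eq:ineq2} to control the second sum; the factor $\log^2\alpha_0\vee 1$ in the denominator of the exponent and the $|\cA|^2$ come precisely from these two Lipschitz constants and from the union bound over the $|\cA|$ values of $a$, while the $t\wedge t^2$ reflects that for $t$ large the linear term in $|x-y|/\min(x,y)$ governs and for $t$ small the quadratic Gaussian tail does. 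Collecting the at most $3|\cA|$ sub-events (two per value of $a$ plus the common good-event failure) and invoking Proposition~\ref{prop:dedecker} on each yields \eqref{eq:deltan2}.

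The main obstacle I expect is bookkeeping rather than anything deep: one must be careful that the "bad denominator" event $\{N_n(\eta)/|\ll|<\alpha_0/2\}$ is itself controlled by Proposition~\ref{prop:dedecker} with a rate at least as good as the main terms (it is, since its deviation threshold $\alpha_0/2$ is a fixed constant $\ge$ the relevant $t$-dependent thresholds once $t\le 1$), and that all the $\ell$-dependence $(2\ell)^{2d-1}$ from Proposition~\ref{prop:dedecker} is legitimately replaced by a constant using $\ell\le L$ and folded into $C(d,L)$. Matching the exact constants ($8$, $|\cA|^2$, $\log^2\alpha_0\vee1$) in \eqref{eq:deltan2} is then just a matter of tracking the worst of the Lipschitz bounds through the union bound.
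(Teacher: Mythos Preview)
Your approach is correct in outline and would go through, but it differs from the paper's in two notable ways, and the paper's route is cleaner.

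For \eqref{eq:ineq2}, the paper avoids your ``good denominator'' event entirely by inserting $\frac{N_n(\eta,a)}{|\ll|\,p(\eta)}$ instead of working on $\{N_n(\eta)/|\ll|\ge\alpha_0/2\}$: one gets
\[
|\hat p_n(a|\eta)-p(a|\eta)|\le \frac{N_n(\eta,a)}{N_n(\eta)}\,\frac{|p(\eta)-N_n(\eta)/|\ll||}{p(\eta)}+\frac{1}{p(\eta)}\Bigl|\frac{N_n(\eta,a)}{|\ll|}-p((\eta,a))\Bigr|,
\]
and since $N_n(\eta,a)/N_n(\eta)\le1$ the random denominator disappears. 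Only two applications of Proposition~\ref{prop:dedecker} are then needed (threshold $\tfrac t2 p(\eta)$ each), giving the factor $2$ in front directly. Your three-event decomposition works too, but requires an extra bookkeeping step.

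For \eqref{eq:deltan2}, the paper adds and subtracts $\frac{N_n(\eta,a)}{|\ll|}\log p(a|\eta)$ rather than $p((\eta,a))\log\hat p_n(a|\eta)$; this yields
\[
\Delta_n(\eta)=\underbrace{\frac{N_n(\eta)}{|\ll|}\,D\bigl(\hat p_n(\cdot|\eta)\,\|\,p(\cdot|\eta)\bigr)}_{\Delta_n^1}+\underbrace{\sum_a\Bigl(\frac{N_n(\eta,a)}{|\ll|}-p((\eta,a))\Bigr)\log p(a|\eta)}_{\Delta_n^2}.
\]
The point is that $\Delta_n^1\ge0$ is a Kullback--Leibler divergence (times a factor $\le1$), so the bound $D(P\|Q)\le\sum_a(P(a)-Q(a))^2/Q(a)$ reduces it to $\sum_a(\hat p_n-p)^2/\alpha_0$, and \eqref{eq:ineq2} with threshold $\sqrt{t\alpha_0/(2|\cA|)}$ gives the $t$-linear exponent. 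The second piece $\Delta_n^2$ involves only the \emph{deterministic} $\log p(a|\eta)$, bounded by $|\log\alpha_0|$, so Proposition~\ref{prop:dedecker} gives the $t^2/\log^2\alpha_0$ exponent. Taking the worse of the two yields $t\wedge t^2$, and $2|\cA|+|\cA|=3|\cA|$ explains the prefactor.

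Your decomposition instead carries $\log\hat p_n(a|\eta)$ as a coefficient, which can equal $-\infty$ when $N_n(\eta,a)=0$; you patch this with a good event forcing $\hat p_n\ge\alpha_0/2$, which is controllable via \eqref{eq:ineq2} at level $t=\alpha_0/2$, but this is an extra layer the paper's KL trick sidesteps. Also, your heuristic for why $t\wedge t^2$ appears is not quite the mechanism: in the paper it arises because the two pieces $\Delta_n^1,\Delta_n^2$ give exponents scaling as $t$ and $t^2$ respectively, and the minimum governs the sum of the tail bounds.
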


\begin{proof}
  Concerning \eqref {eq:ineq2} we obtain by inserting and subtracting
  the term $ \frac{N_n ( \eta, a ) }{| \ll | p ( \eta) },$
\begin{eqnarray*}
&& \left|  \hat p_n ( a |  \eta) - p ( a |  \eta) \right|   \\  
&& \le \left| \frac{N_n ( \eta, a )}{N_n ( \eta)} -  \frac{N_n ( \eta, a ) }{| \ll | p ( \eta) } \right| 
+ \left|  \frac{1 }{  p ( \eta) } \left( \frac{N_n ( \eta, a ) }{| \ll | } -  p ( ( a, \eta) ) \right)  \right| .
\end{eqnarray*}
The first term in the last expression can be upper bounded by 
\begin{eqnarray*}
&& \left| \frac{N_n ( \eta, a )}{N_n ( \eta)} -  \frac{N_n ( \eta, a ) }{| \ll | p ( \eta) } \right| 
 = N_n ( \eta, a )  \left| \frac{| \ll | p ( \eta) - N_n ( \eta) }{N_n ( \eta) | \ll | p ( \eta) }  \right| \\
& & = \frac{ N_n ( \eta, a ) }{N_n ( \eta )} \left| \frac{ p( \eta) - \frac{N_n ( \eta)}{| \ll |} }{ p  ( \eta) }  \right|  \le \left| \frac{ p ( \eta) - \frac{N_n ( \eta)}{| \ll |} }{ p ( \eta) }  \right| .  
\end{eqnarray*}
As a consequence we obtain that 
\begin{eqnarray*}
&&\mu \left(  \left|  \hat p_n ( a |  \eta) - p ( a |  \eta ) \right|  > t \right) 
\\
&&\quad \quad \quad \quad \le \mu \left(\left| p (\eta ) - \frac{N_n ( \eta)}{| \ll |} \right| > \frac t2  p ( \eta) \right) \\
&& \quad \quad \quad \quad\quad \quad \quad \quad + \mu \left(\left|\left( \frac{N_n ( \eta, a ) }{| \ll | } -  p( (\eta, a) ) \right)  \right| > \frac t2  p ( \eta) \right)  .
\end{eqnarray*}
Then, applying (\ref{eq:expodedeckerbis}), we get 
$$ \mu \left(  \left|  \hat p_n ( a | \eta) - p ( a |  \eta ) \right|  > t \right)  \le 
2 e^{\frac{1}{e}} exp \left(-  \frac{ c(d,L) | \ll | t^2 p ( \eta )^2}{ 4  ( 2 \ell)^{2d -1}  e } \right)
\le 
2 e^{\frac{1}{e}} exp \left(- \frac{c(d,L) | \ll | t^2 \alpha_0^2}{ 4 (2L)^{2d-1}  e } \right)
 . $$
Hence, writing $C(d,L) = c(d,L) / (2L)^{2d -1}, $ where $c (d,L)$ is the constant of (\ref{eq:expodedeckerbis}), assertion \eqref{eq:ineq2} follows. 
 
To show  \eqref  {eq:deltan2} we   subtract  and add  the term $ \frac{N_n ( \eta, a )}{|\ll |} \log p ( a | \eta ) $ to  $\Delta_n (\eta  )$. We obtain   
\begin{eqnarray*}
\Delta_n (\eta  ) & = &\sum_{a \in \cA}  \frac{N_n (\eta, a )}{|\ll | } \log \frac{\hat p _n ( a | \eta )}{p ( a | \eta )}  \\ 
&& + \sum_{a \in \cA} \left(\frac{N_n (\eta, a )}{|\ll |}  -p ( (\eta, a))  \right)  \log p( a | \eta ) \\
& =   & \Delta^1_n (\eta) + \Delta^2_n (\eta ) .
\end{eqnarray*}
We rewrite   $\Delta^1_n (\eta )$ in the following way and then apply the estimate (\ref{eq:divergence}):
\begin{eqnarray*}
 \Delta^1_n (\eta )  & = & \frac{N_n (\eta)}{|\ll |} \sum_{a \in \cA} \hat p_n (a| \eta)  \log \frac{\hat p _n ( a | \eta )}{p ( a | \eta )} \\
&&  \le \frac{N_n (\eta)}{|\ll |} \sum_{a \in \cA} \frac{\left( \hat p_n (a |\eta) - p (a |\eta) \right)^2 }{p (a |\eta) } \\
&& \le  \sum_{a \in \cA} \frac{\left( \hat p_n (a |\eta) - p (a |\eta) \right)^2 }{p (a |\eta) }  \\
&& \le  \sum_{a \in \cA} \frac{\left( \hat p_n (a |\eta) - p (a |\eta) \right)^2 }{ \alpha_0} .  
\end{eqnarray*}
Therefore  
\begin{eqnarray}\label{eq:horrible}
&& \mu \left( | \Delta^1_n (\eta)  | > \frac t2 \right) \nonumber \\
&& \le \sum_{a \in \cA} \mu \left( \left( \hat p_n (a |\eta) - p (a |\eta) \right)^2 > \frac{1}{| \cA| }\frac t2 \alpha_0    
\right) \nonumber \\
&& \le  2 | \cA | e^{1/e} \exp \left( - C(d,L) \frac{ | \ll |  t \;   \alpha_0^2   }{8  | \cA|   e } \right), 
\end{eqnarray}
by (\ref{eq:ineq2}).
We get for the second term   
\begin{eqnarray}\label{eq:horrible2}
&& \mu \left( | \Delta^2_n (\eta )  | > \frac t2 \right) \nonumber \\
&& \le \sum_{a \in \cA} \mu \left( \left|\frac{N_n (\eta, a )}{|\ll |}  - p  ( (\eta, a))  \right| >    \frac{1}{| \cA| }\frac t2
\frac{1}{|\log   \alpha_0    |}
\right) \nonumber \\
&& \le | \cA| e^{1/e} \exp \left( - C( d,L) \frac{| \ll | t^2}{ 4 | \cA |^2 \log^2  \alpha_0     e  }\right) ,  
\end{eqnarray}
by  (\ref{eq:expodedeckerbis}).
This finishes the proof. 
\end{proof}

\section{Deviation inequalities for overestimation}\label{section:5}
In order to control the probability of overestimation we do not need as strong assumptions as 
for the control of the probability of underestimation. Indeed, we can avoid to impose Assumption \ref{ass:mixing}.
We mimic the method used  by Csisz\`ar and Talata (2006), \cite{CT}, see Proposition 3.1 and Lemma 3.3   of  their paper. 
Their results are typicality results and they obtain the 
almost sure convergence of the empirical probabilities to the theoretical ones. We follow the way indicated
by Csisz\`ar and Talata (2006), \cite{CT}, but we  quantify the errors and obtain in this way precise deviation inequalities.   We will need  only Assumption \ref {ass:ergod}.

We partition the region $ \bar \Lambda_n$ by intersecting it with a
sub lattice of $\Z^d$ such that the distance between sites in the sub
lattice is $4 R_n+1$.  More precisely, let
 $$ \ll^k = \{ j \in \ll, \; j = k + (4R_n +1) l , l \in \Z^ d \}, \quad \| k \| \le 2 R_n. $$
For any $\ell \le R_n$ and any fixed configuration $\eta \in \cA^{V_0^0 (\ell)},$ let 
$$ N_n^k (\eta ) = \sum_{ j \in \ll^k } 1_{ \{ X_j^ \ell = \eta \}} $$
be the number of occurrences of $\eta$ in the sample having center in $\ll^k .$ In the same way we denote 
$$ N_n^k (\eta , a) = \sum_{ j \in \ll^k } 1_{ \{  X_j^ \ell = \eta , X_j = a \}} .  $$
Note that we have 
$$  N_n (\eta ) = \sum_{ k : \| k \| \le 2 R_n}  N_n^k (\eta ), \; N_n (\eta, a ) = \sum_{ k : \| k \| \le 2 R_n}  N_n^k (\eta , a  ) .$$
Let 
\begin{equation}\label{LL3} {\cal A} (n,\ell, k)= \{ \frac32 \log
  N_n^k (\eta ) \geq \log | \ll |, \mbox{ for all } \eta \in
  \cA^{V_0^0 (\ell)} \mbox{ s.t.  } \ell \geq l_0 (\eta)  \}\end{equation}
and 
\begin{equation}\label{eq:rest2} {\cal B} (n ,\ell ) = \bigcap_{ k :
    \| k \| \le 2 R_n} {\cal A} (n , \ell, k) .
\end{equation}  
The probabilities $ \mu ( {\cal A} (n,\ell, k))$ and $ \mu
({\cal B} (n ,\ell ))$ can be immediately obtained by Lemma \ref
{le:g} given at the end of this section.  Recall the definition of
$\hat p_n $ in (\ref{eq:estimator}).

\noindent
\begin{theo}\label{theo1}  For any 
\begin{equation}\label{eq:choiceofdelta}
 \delta > 2^d \log | \cA | \frac{3 e}{4 q_{min} },
\end{equation}
there exist a positive constant $c(\delta)= c(\delta, q_{\min}) $ and $n_0$ (not depending on $q_{min}$ nor on $\delta $) such
that for all $ n \geq n_0 ,$ for any $\ell \le R_n,$ 
\begin{eqnarray}
&& \mu   \Bigg [ \mbox{ $\exists $ } \eta \in \cA^{V_0^0 (\ell)} ,  \ell \geq l_0 (\eta ) :   \left | \hat p_n (a| \eta ) - \gamma_{\{0 \}} (a| \eta  )\right | >   \sqrt {\kappa (\delta)  \gamma_{\{0 \}} (a| \eta  ) 
\frac{  
{ \log N_n (\eta )  }}{ N_n (\eta )}  } \; \; , \nonumber \\
&& \quad \quad \quad \quad \quad \quad \quad \quad   
  {\cal B} (n ,  \ell ) \Bigg ]  \nonumber \\
&&  \quad \quad \quad \quad   \le 4 \,   (4 R_n + 1 )^d 
  \exp \left(  -c(\delta) \sqrt{   \log | \ll |  }  \right),
\end{eqnarray}
where $\kappa (\delta)>0 $ is as in \eqref{eq:kappa}. 
\end {theo}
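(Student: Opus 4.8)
\textbf{Proof plan for Theorem \ref{theo1}.}

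The plan is to reduce the deviation of $\hat p_n(a\mid\eta)$ from $\gamma_{\{0\}}(a\mid\eta)$ to a deviation of a sum of bounded increments along a suitable martingale, where the key point is that the randomness is split over the sparse sub-lattices $\ll^k$ so that on each $\ll^k$ the indicators $1_{\{X_j^\ell=\eta,\;X_j=a\}}$ have a conditional-expectation structure one can control. First I would fix $\eta\in\cA^{V_0^0(\ell)}$ with $\ell\ge l_0(\eta)$, fix $a\in\cA$, and fix $k$ with $\|k\|\le 2R_n$. Enumerate the sites of $\ll^k$ as $j_1,j_2,\dots$ in some fixed order and consider the filtration generated successively by the configurations $X_{j_1}^\ell,X_{j_1},X_{j_2}^\ell,X_{j_2},\dots$. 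Because the spacing in $\ll^k$ is $4R_n+1>2\ell$, the window $V_{j_m}^0(\ell)$ around a new site $j_m$ does not touch the center of any previously inspected site, and the one-point specification $\gamma_{\{0\}}(a\mid X_{j_m}^\ell)$ is exactly the conditional probability that $X_{j_m}=a$ given everything inspected so far together with $X_{j_m}^\ell$ (this uses that $\ell\ge l_0(\eta)$, so the context is contained in $V_{j_m}(\ell)$). Hence
\begin{equation}
M_t=\sum_{m=1}^{t}\Bigl(1_{\{X_{j_m}^\ell=\eta,\;X_{j_m}=a\}}-1_{\{X_{j_m}^\ell=\eta\}}\,\gamma_{\{0\}}(a\mid\eta)\Bigr)
\end{equation}
is a martingale with increments bounded by $1$ in absolute value, and its terminal value equals $N_n^k(\eta,a)-N_n^k(\eta)\,\gamma_{\{0\}}(a\mid\eta)$.

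Next I would apply an Azuma--Bernstein-type inequality to $M_t$, using that the conditional variance of the $m$-th increment is at most $\gamma_{\{0\}}(a\mid\eta)$ on the event $\{X_{j_m}^\ell=\eta\}$, so the predictable quadratic variation is bounded by $N_n^k(\eta)\,\gamma_{\{0\}}(a\mid\eta)$. This yields, on the event $\{N_n^k(\eta)\text{ is a given value}\}$, a bound of Bernstein form
\begin{equation}
\mu\Bigl(|N_n^k(\eta,a)-N_n^k(\eta)\gamma_{\{0\}}(a\mid\eta)|>u\Bigr)\le 2\exp\Bigl(-\frac{u^2}{2(N_n^k(\eta)\gamma_{\{0\}}(a\mid\eta)+u/3)}\Bigr).
\end{equation}
Then I would choose the deviation level $u$ proportional to $\sqrt{\kappa(\delta)\,\gamma_{\{0\}}(a\mid\eta)\,N_n(\eta)\log N_n(\eta)}$, divide by $N_n(\eta)$ to pass from $N_n^k(\eta,a)$-deviations to $\hat p_n(a\mid\eta)$-deviations (summing the contributions over the $(4R_n+1)^d$ values of $k$, and using $N_n(\eta)=\sum_k N_n^k(\eta)$ together with the event ${\cal A}(n,\ell,k)$, which on ${\cal B}(n,\ell)$ guarantees $\tfrac32\log N_n^k(\eta)\ge\log|\ll|$, hence $N_n^k(\eta)\ge|\ll|^{2/3}$ is large). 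The constant $\kappa(\delta)$ from \eqref{eq:kappa} is calibrated exactly so that in the Bernstein bound the ``variance term'' dominates the ``range term'' $u/3$ — here the hypothesis \eqref{eq:choiceofdelta} on $\delta$ together with $q_{min}\le\gamma_{\{0\}}(a\mid\eta)$ is what makes $u/3\le N_n^k(\eta)\gamma_{\{0\}}(a\mid\eta)$, so the exponent becomes of order $\kappa(\delta)\log N_n^k(\eta)/\text{const}$, and after inserting $\log N_n^k(\eta)\ge\tfrac23\log|\ll|$ this gives an exponent of order $c(\delta)\sqrt{\log|\ll|}$ once one also accounts for the $\sqrt{\log N_n(\eta)}$ scaling of $u$. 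Finally I would take the union bound over $a\in\cA$ (factor $|\cA|$, absorbed) and over $k$ with $\|k\|\le 2R_n$ (factor $(4R_n+1)^d$), arriving at the stated bound $4(4R_n+1)^d\exp(-c(\delta)\sqrt{\log|\ll|})$; the factor $4$ accommodates the two-sided estimate and the splitting of $u$ into the two regimes.

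The main obstacle I expect is the bookkeeping that turns the per-$k$, per-value-of-$N_n^k(\eta)$ Bernstein bound into a \emph{uniform} statement over all $\eta$ with $\ell\ge l_0(\eta)$ and over the \emph{random} count $N_n(\eta)$ appearing inside the square root of the threshold: one cannot condition on $N_n(\eta)$ and simultaneously treat it as deterministic. The clean way around this is a peeling/chaining argument over dyadic ranges of $N_n^k(\eta)$ — for each dyadic block $2^r\le N_n^k(\eta)<2^{r+1}$ apply the fixed-count bound with the worst-case threshold in that block, then sum the geometric series in $r$; the number of relevant blocks is $O(\log|\ll|)$, which only costs a polylogarithmic prefactor that is harmless against $\exp(-c(\delta)\sqrt{\log|\ll|})$ and is anyway dominated by the $(4R_n+1)^d$ factor. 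The restriction to the event ${\cal B}(n,\ell)$ in the statement is precisely what lets this peeling start at $r$ of order $\tfrac23\log_2|\ll|$ rather than at $r=0$, which is essential: without the lower bound $N_n^k(\eta)\gtrsim|\ll|^{2/3}$ the relative error $\sqrt{\log N_n^k/N_n^k}$ would not be small and the argument would break. The count over $\eta$ is then subsumed because on ${\cal B}(n,\ell)$ we only ever consider $\eta$ with $\ell\ge l_0(\eta)$, and there are at most $|\cA|^{|V_0^0(R_n)|}$ of them — a factor that is also polylogarithmically controlled once one recalls $R_n\sim(\log|\ll|)^{1/2d}$, so $|V_0^0(R_n)|\lesssim(\log|\ll|)^{1/2}$ and $|\cA|^{|V_0^0(R_n)|}=\exp(O(\sqrt{\log|\ll|}))$, which is the reason the final exponent is $\sqrt{\log|\ll|}$ rather than $\log|\ll|$ and dictates the admissible size of $c(\delta)$.
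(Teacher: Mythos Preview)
Your overall framework --- martingale on each sparse sub-lattice $\ll^k$, Bernstein-type concentration, peeling over the values of $N_n^k(\eta)$, union bound over $k$ and over $\eta$ --- is the paper's approach. But the calibration is wrong, and your account of why the final exponent is $\sqrt{\log|\ll|}$ does not hold together.

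The paper does \emph{not} work with the target threshold $\sqrt{\kappa(\delta)\gamma\log N_n(\eta)/N_n(\eta)}$ at the sub-lattice level. It first proves (Lemma~\ref{le:1}) a per-$k$ deviation with the \emph{weaker} threshold $\sqrt{\delta\,\gamma\,(\log N_n^k(\eta))^{1/2}/N_n^k(\eta)}$ --- note the power $1/2$ on the logarithm. Peeling over $e^j<N_n^k(\eta)\le e^{j+1}$ and applying the crude bound $\tilde\mu(S^*_N\ge c)\le 2\exp(-2c^2/N)$ gives exponent $-(2q_{\min}\delta/e)\sqrt{j}$; on $\mathcal A(n,\ell,k)$ one has $j\ge\tfrac23\log|\ll|-1$, so the per-$\eta$ bound is already $\exp(-c\sqrt{\log|\ll|})$. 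The $\sqrt{\log|\ll|}$ is thus built into the \emph{choice of threshold}, not forced by the union bound over $\eta$. Your sentence ``the exponent becomes of order $\kappa(\delta)\log N_n^k(\eta)/\text{const}$, and \dots this gives an exponent of order $c(\delta)\sqrt{\log|\ll|}$'' is internally inconsistent: an exponent of order $\log N_n^k\ge\tfrac23\log|\ll|$ gives $\exp(-c\log|\ll|)$, not $\exp(-c\sqrt{\log|\ll|})$.

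The reason for the odd $(\log N_n^k)^{1/2}$ scaling is the combination step, which you do not describe and which is where $\kappa(\delta)$ comes from. The paper writes $|\hat p_n(a|\eta)-\gamma(a)|\le\sum_k|N_n^k(\eta,a)/N_n^k(\eta)-\gamma(a)|\cdot N_n^k(\eta)/N_n(\eta)$ and applies Jensen (concavity of $\sqrt{\ \cdot\ }$) to get, on the good event,
\[
|\hat p_n(a|\eta)-\gamma(a)|\le(4R_n+1)^{d/2}\,\delta^{1/2}\gamma(a)^{1/2}\,(\log N_n(\eta))^{1/4}\,N_n(\eta)^{-1/2}.
\]
Since $(4R_n+1)^{d/2}\le 5^{d/2}(\log|\ll|)^{1/4}$ and, on $\mathcal B(n,\ell)$, $\log|\ll|\le\tfrac32\log N_n(\eta)$, the two quarter-powers combine to $(\log N_n(\eta))^{1/2}$, producing exactly $\kappa(\delta)=5^d(3/2)^{1/2}\delta$. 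Had you used the natural threshold $\sqrt{\delta\gamma\log N_n^k/N_n^k}$ at the sub-lattice level, Jensen would give $\kappa\asymp(4R_n+1)^d\delta\asymp 5^d\sqrt{\log|\ll|}\,\delta$, which is not constant; and your alternative of choosing the global $u$ and pigeonholing over $k$ loses a factor $(4R_n+1)^{2d}\asymp\log|\ll|$ in the Bernstein exponent, leaving only a bounded constant there.

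One smaller correction: the hypothesis \eqref{eq:choiceofdelta} is not there to make the Bernstein range term $u/3$ dominated by the variance term. Its role is that the union bound over $\eta\in\cA^{V_0^0(\ell)}$ costs $\exp(2^d\log|\cA|\sqrt{\log|\ll|})$, and one needs $c(\delta)=\tfrac23\cdot\tfrac{2q_{\min}\delta}{e}-2^d\log|\cA|>0$ so that the product still decays.
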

 
\noindent
The main ingredient to  prove Theorem \ref{theo1} is the following lemma.
\vskip0.5cm

\noindent
\begin{lemma} \label {le:1} 
  For any $\delta$ as in \eqref {eq:choiceofdelta} there exist $n_0 $
  (not depending on $q_{min}$ nor on $\delta $) and a positive
  constant $c(\delta)= c(\delta, q_{\min}),$ such that for all $n \geq n_0,$ for any $\ell \le R_n , $
 \begin{eqnarray}\label{eq:66} && \mu   \Bigg[ \mbox{ $\exists $  }  \eta \in \cA^{V_0^0 (\ell)} ,  \ell \geq l_0 (\eta ) :  \left |  \frac { N_n^k (\eta , a) } {   N_n^k (\eta ) } - \gamma_{\{0\}} ( a|  \eta  )  \right |  \ge  \sqrt  {\delta  \gamma_{\{0\}} ( a|  \eta  )  \frac{(\log N_n^k (\eta )^{\frac12}}{ N_n^k (\eta )  },   
   }    \nonumber \\
   && \quad \quad \quad \quad \quad \quad \quad \quad    \;  {\cal A} (n , \ell, k)  \Bigg]  
   \nonumber \\
   && \le
   4    \exp \left(  -c (\delta )   \sqrt{   \log | \ll |  }  \right) .
 \end{eqnarray} 
 \end{lemma}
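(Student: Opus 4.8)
\textbf{Proof strategy for Lemma \ref{le:1}.}

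The plan is to reduce the statement to an application of the Dedecker-type deviation inequality \eqref{eq:expodedeckerbis}, on the event $\mathcal{A}(n,\ell,k)$ which provides a lower bound on the counting variables $N_n^k(\eta)$. First I would fix $\eta \in \cA^{V_0^0(\ell)}$ with $\ell \geq l_0(\eta)$, fix $a \in \cA$, and rewrite the error
$$ \frac{N_n^k(\eta,a)}{N_n^k(\eta)} - \gamma_{\{0\}}(a|\eta) = \frac{N_n^k(\eta,a)}{N_n^k(\eta)} - p(a|\eta), $$
using the fact that, when $\ell \geq l_0(\eta)$, the one-point specification depends only on the context contained in $V_0(\ell)$ so that $p(a|\eta) = \gamma_{\{0\}}(a|\eta)$. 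Then I would insert and subtract $N_n^k(\eta,a)/(|\ll^k| p(\eta))$ exactly as in the proof of the Corollary preceding Section \ref{section:5}, splitting the deviation into a term controlled by $|p(\eta) - N_n^k(\eta)/|\ll^k||/p(\eta)$ and a term controlled by $|N_n^k(\eta,a)/|\ll^k| - p((a,\eta))|/p(\eta)$. Both are then handled by \eqref{eq:expodedeckerbis} applied to the sublattice $\ll^k$ (whose points are spaced by $4R_n+1$, well beyond the interaction range, so the same inequality applies with $|\ll|$ replaced by $|\ll^k| \asymp |\ll|/R_n^d$).

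The key quantitative step is the choice of the threshold. On $\mathcal{A}(n,\ell,k)$ we have $N_n^k(\eta) \geq |\ll|^{2/3}$, hence the right-hand side of the probability event, roughly $\sqrt{\delta\, \gamma_{\{0\}}(a|\eta)\, (\log N_n^k(\eta))^{1/2}/N_n^k(\eta)}$, is at least of order $\sqrt{\delta\, q_{min}\, (\log|\ll|)^{1/2} / N_n^k(\eta)}$ by Assumption \ref{ass:ergod}. Plugging a threshold $t$ of this order into the exponential bound $\exp(-c\,|\ll^k|\, t^2\, p(\eta) / e)$ and using $p(\eta) \geq q_{min}^{(2\ell)^d}$ together with $|\ll^k| \geq N_n^k(\eta)$ would give a bound of the form $\exp(-c'\,\delta\, q_{min}\, \sqrt{\log|\ll|}/e)$; the constraint \eqref{eq:choiceofdelta}, $\delta > 2^d \log|\cA|\, \frac{3e}{4 q_{min}}$, is exactly what is needed to beat the combinatorial factor $|\cA|^{|V_0^0(\ell)|} \leq |\cA|^{(2\ell)^d} \leq |\cA|^{(2R_n)^d} \sim \exp((2R_n)^d \log|\cA|)$ coming from the union bound over all $\eta \in \cA^{V_0^0(\ell)}$, since $(2R_n)^d \sim 2^d \log|\ll|$ by the definition \eqref{eq:rn} of $R_n$. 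This is where the constant $c(\delta) = \frac{2}{3}\frac{2 q_{min}\delta}{e} - 2^d \log|\cA|$ appears, and where one sees that $n_0$ can be taken independent of $q_{min}$ and $\delta$ once the logarithmic factors dominate.

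The main obstacle is the careful bookkeeping of the various scales simultaneously: the number of configurations $\eta$ grows like $\exp(c R_n^d) = \exp(c' \log|\ll|)$, which is only polynomially large in $|\ll|$ but competes with the $\exp(-c\sqrt{\log|\ll|})$ rate, so one cannot afford any loss in the exponent. In particular one must be careful that the event of interest is a union over $\eta$ with $\ell \geq l_0(\eta)$ \emph{and} over $a \in \cA$, giving the factor $4$ (two deviation terms, two signs, absorbed with the $|\cA|$ factor into the constants), and that the self-consistent bound $N_n^k(\eta) \geq |\ll|^{2/3}$ on $\mathcal{A}(n,\ell,k)$ is used both to turn the data-dependent threshold into a deterministic one and to absorb the $(\log N_n^k(\eta))^{1/2}$ factor. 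Finally, I would sum the resulting bound over $\|k\| \leq 2R_n$ — contributing the factor $(4R_n+1)^d$ in Theorem \ref{theo1}, though at the level of Lemma \ref{le:1} only the single-$k$ estimate is claimed — and verify that the polynomial prefactors $(4R_n+1)^d \sim (\log|\ll|)^{d/2}$ are absorbed by a slightly smaller choice of $c(\delta)$.
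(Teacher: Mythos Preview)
Your reduction to Dedecker's inequality \eqref{eq:expodedeckerbis} does not deliver the stated bound, and it misses the structural point of the lemma. Two things go wrong.

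First, Proposition \ref{prop:dedecker} is proved under Assumption \ref{ass:mixing}, whereas Lemma \ref{le:1} (and Theorem \ref{theomain1}) are stated under Assumption \ref{ass:ergod} alone. The whole purpose of Section \ref{section:5} is to control the overestimation \emph{without} any mixing hypothesis; invoking \eqref{eq:expodedeckerbis} defeats that purpose. The spacing $4R_n+1$ on $\ll^k$ does not by itself make the Dedecker bound available.

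Second, and more seriously, even if you allow yourself the mixing assumption, the quantitative bookkeeping fails. Dedecker's inequality controls fluctuations of $N_n^k(\eta,a)-N_n^k(\eta)\gamma(a)$ on the scale $\sqrt{|\ll^k|}$, so the resulting exponent is of order $s^2/|\ll^k|$ with $s=\sqrt{\delta\gamma(a)N_n^k(\eta)(\log N_n^k(\eta))^{1/2}}$. Since the threshold scales with $N_n^k(\eta)$, this ratio is essentially $N_n^k(\eta)/|\ll^k|\approx p(\eta)$, and your insert--subtract decomposition only makes this worse by introducing $1/p(\eta)$ explicitly. For $\ell$ up to $R_n$ one has $p(\eta)\ge q_{min}^{(2\ell)^d}\ge q_{min}^{(2R_n)^d}=\exp\!\big(2^d\sqrt{\log|\ll|}\,\log q_{min}\big)$, which decays like $\exp(-c\sqrt{\log|\ll|})$. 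The exponent you obtain therefore carries an extra factor $p(\eta)$ (or $p(\eta)^2$) and cannot compete with the union-bound cost $|\cA|^{(2R_n)^d}=\exp\!\big(2^d\log|\cA|\sqrt{\log|\ll|}\big)$. The claimed formula $c(\delta)=\tfrac{2}{3}\tfrac{2q_{min}\delta}{e}-2^d\log|\cA|$ is thus not supported by your argument.

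The paper proceeds differently. It writes
\[
N_n^k(\eta,a)-N_n^k(\eta)\gamma(a)=\sum_{l=1}^{N_n^k(\eta)} Z_l,\qquad Z_l=\mathbf 1_{\{X_{j_l}=a\}}-\gamma(a),
\]
and observes that, because $\ell\ge l_0(\eta)$ and the centers $j_l\in\ll^k$ are more than $2\ell$ apart, the $Z_l$ are \emph{conditionally independent} with mean zero given the configuration off the centers. This is a consequence of the variable-neighborhood structure and needs only Assumption \ref{ass:ergod}. After extending $(Z_l)$ by an independent copy so that the partial sums $S_N$ are independent of $N_n^k(\eta)$, one peels over blocks $e^j<N_n^k(\eta)\le e^{j+1}$, uses $\mathcal A(n,\ell,k)$ to restrict $j$ to the range $j<\log|\ll|\le\tfrac32(j+1)$, and applies Bernstein's inequality (Lemma \ref{le:bernstein}) to each block. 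The resulting exponent is $2s^2/e^{j+1}\ge \tfrac{2q_{min}\delta}{e}\sqrt{j}$, with \emph{no} $p(\eta)$, which is exactly what allows the final union bound over $\eta$ to produce $c(\delta)$ as stated. The conditional-independence reduction and the peeling are the missing ideas in your proposal.
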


\begin {proof} 
  Fix $ \eta \in \cA^{V_0^0 (\ell)} $ with $ \ell \geq l_0 (\eta )$ and  set  $ \gamma (a)
  = \gamma_{\{0\} } ( a| \eta ).$ Recall that $\gamma (a) \geq q_{min}
  .$ We first provide an upper bound for fixed $\eta $ of
$$ \mu  \left [ \left | N_n^k (\eta , a)    -    N_n^k (\eta ) \gamma (a)  \right | 
  \ge \sqrt { \delta \gamma (a) N_n^k (\eta ) (\log N_n^k (\eta
    ))^{1/2} } \; , {\cal A} (n, \ell, k)
\right ] .
$$
By definition
\begin{equation}\label{eq:7b } N_n^k (\eta , a) - N_n^k (\eta )
  \gamma_{\{0\}} ( a| \eta ) = \sum_{ j \in \ll^k } 1_{ \{ X_j^ \ell
    = \eta \}} \left [ 1_{ \{ X_j = a \} } - \gamma_{\{0\}} ( a|
    \eta ) \right ].  \end {equation}
We order in some arbitrary way  the points 
$$ \{  j \in \ll^k,   X_j^ \ell = \eta   \} =   \{ j_l ,    1 \le l \le  N_n^k (\eta )   \}.  $$  
Define
$$ Z_l= \left [  1_{ \{ X_{ j_l} = a \} } -  \gamma_{\{0\}} ( a|  \eta  )  \right ], \quad l=1, \dots  N_n^k (\eta ).$$
The random variables $ \{  Z_l, \;  l=1, \dots  N_n^k (\eta )\}$ are identically  distributed random variables  with   
  mean zero and conditionally independent, i.e.     for $i \neq j$,  $0 \le | z_i | \le 1 $, $0 \le |z_j| \le 1 $ 
\begin{eqnarray*}
&& \mu   \left [ Z_i =z_i, Z_j=z_j |    \omega ( \ll \setminus \cup_{j \in \ll^k }  V_j (\ell) )\right ]  =  \mu  \left [ Z_i =z_i |    \omega ( \ll \setminus \cup_{j \in \ll^k }  V_j (\ell) )\right ] \\
&& \quad \quad \quad\quad\quad\quad \quad \quad \quad\quad\quad\quad \quad \quad \quad\quad\quad\quad \cdot \mu  \left [ Z_j=z_j  |    \omega ( \ll \setminus \cup_{j \in \ll^k }  V_j (\ell) )\right ].
\end{eqnarray*}
Take an independent copy $\{ Z'_i, i \geq 1 \} $ of i.i.d. $\! \!$ random variables, having the same distribution as $Z_1,$ independent of $X.$ 
Then for $i  > N_n^k (\eta )$ we let  $Z_i = Z'_{i -  N_n^k (\eta )} . $ The important point of this definition is that in this
way, the sequence of random variables $ Z_1, Z_2 , \ldots $ is independent of $N_n^k (\eta  ) .$ Define partial sums 
 $$ S_N= \sum_{j=1}^N Z_j, \quad  S_N^* = \max \{ S_j; j \le N \} .$$ 
 These are still independent of $N_n^k (\eta  ) .$
 We write the quantity in \eqref {eq:7b } as 
 \begin{equation}\label{eq:7c } N_n^k (\eta , a)    -    N_n^k (\eta ) \gamma_{\{0\}} ( a| \eta  ) = S_{ N_n^k (\eta )} \le  S^*_{ N_n^k (\eta )}.
  \end {equation}  
 We now use arguments similar to those in the proof of Lemma 3.3 of Csisz\`ar and Talata (2006), \cite{CT}. In the following, 
 $$\tilde \mu = \mu  ( \cdot | \omega ( \ll \setminus \cup_{j \in \ll^k }  V_j (\ell) ) ) $$ 
 denotes always conditional 
 probability when conditioning with respect to $\omega ( \ll \setminus \cup_{j \in \ll^k }  V_j (\ell) ).$ Then,
  \begin{eqnarray}\label{eq:8}
  && \tilde \mu  \left [  S^*_{ N_n^k (\eta )} 
   \ge  \sqrt  {   \delta   \gamma (a)   N_n^k (\eta )   (\log N_n^k (\eta ))^{1/2}
 } \quad  ,   {\cal A} (n,  \ell , k) 
    \right ]  \le 
  \\
 &&  \sum_{j \in \N} \tilde \mu  \! \! \left [ S^*_{ N_n^k (\eta )} 
  \! \!  \ge \! \!  \sqrt  { \delta    \gamma (a)   N_n^k (\eta )   (\log N_n^k (\eta ))^{1/2}
 } ;  e^j \! \! < N_n^k (\eta ) \le e^{j +1} \!   ,  {\cal A}  (n,  \ell, k)
    \right ] 
       . \nonumber
\end {eqnarray}
Note that on ${\cal A} (n,   \ell ,k ) \cap \{e^j < N_n^k (\eta ) \le e^{j +1}  \} ,$  see \eqref {LL3},  
since $\log N_n^k (\eta ) \le \log |\ll |,$ 
$$ j < \log | \ll | \le   \frac32 (j+1).$$

\noindent
Hence by independence of $\{ S^*_N , N \geq 1 \} $ and $N_n^k (\eta ) ,$ the last expression of \eqref{eq:8} 
can be bounded from above as follows. 
\begin{equation}\label{eq:jrestriction}
  \sum_{ j :  j < \log | \ll | \le  \frac32 (j+1)  } \tilde \mu \left [ S^*_{ e^{j+1}}  
  \ge  \sqrt  {  \delta  \gamma (a)     e^j  \sqrt{j}} \right] \le
 \sum_{ j :   \log | \ll | \le  \frac32 (j+1)  } \tilde \mu \left [ S^*_{ e^{j+1}}  
  \ge  \sqrt  {  \delta   \gamma (a)     e^j  \sqrt{j}} \right]  .
\end{equation}  
Now, Bernstein's inequality, see Lemma \ref{le:bernstein}, yields
$$ \tilde \mu \left[  S^*_N \ge c \right] \le 2 \exp \left( - \frac{2 c^2}{N  } \right)  . $$
This gives 
$$ \tilde \mu   \left [ S^*_{ e^{j+1}}  
  \ge  \sqrt  { \delta     \gamma (a)    e^j  \sqrt{j}} \right]  \le 2 \exp \left(  -\frac{2 q_{min}  }{e} \delta \sqrt{j} \right)    .$$ 
Taking into account that 
$$\int_{\sqrt a} ^\infty e^{-b y } y  dy=    \frac 1 b e^{-b {\sqrt a} } {\sqrt a} + \frac 1 {b^2}e^{-b {\sqrt a} },   $$
setting   $b = 2 q_{min} \delta / e  $ and $ a =  \frac23 \log |\ll | - 1,$ 
one can upper bound the sum over $j$   in \eqref {eq:jrestriction} obtaining 
\begin{eqnarray*}
&&  \tilde \mu  \left [  S^*_{ N_n^k (\eta )} 
   \ge  \sqrt  { \delta    \gamma (a)    N_n^k (\eta )   (\log N_n^k (\eta ))^{1/2}
 } \quad ,  {\cal A}  (n,  \ell, k) 
    \right ]  \\
    &&  \quad \quad \quad \quad \le   \frac{2e}{\delta q_{min}}\left( \sqrt{\frac23 \log |\ll | - 1 } + \frac{e}{2 \delta q_{min}} \right) 
       \exp \left(  -\frac{2 q_{min} \delta   }{e} \sqrt{   \frac23  \log | \ll | - 1  }  \right)  \\ 
    && \quad \quad \quad \quad \le
     4  \left( \sqrt{\frac23 \log |\ll | - 1 } + 1  \right) \exp \left(  -\frac{2 q_{min} \delta   }{e} \sqrt{   \frac23  \log | \ll | - 1  }  \right)
  ,
\end{eqnarray*}
since by the choice of $\delta $ in (\ref{eq:choiceofdelta}), $\frac{e}{2 \delta q_{min}} \le 1.$

Now, there exists $n_0 $ (not depending on $q_{min}$ nor on $\delta $) 
such that for all $ n \geq  n_0,$ this last upper bound can be replaced by  
$$ \left( \sqrt{\frac23 \log |\ll | - 1 } + 1  \right) \exp \left(  -\frac{2 q_{min} \delta   }{e} \sqrt{   \frac23  \log | \ll | - 1  }  \right)  \le \exp \left(  -  \frac23 \frac{2 q_{min} \delta   }{e} \sqrt{   \log | \ll |   }  \right) .   $$
\noindent
This upper bound also holds for the non-conditioned probability $\mu  .$ 
Finally, in order to get the result uniformly over all possible configurations $\eta $ having $ l_0 (\eta ) \le \ell ,$ we need to sum over all 
possible choices of patterns $ \eta  .$ This gives, by definition of $R_n,$  
$$ |\cA |^{|V_0^0 (\ell)| }  =
 |\cA |^{(2 \ell )^d } \le |\cA |^{(2 R_n  )^d } = e^{ 2^d  \log | \cA|  \sqrt{ \log | \ll |}   }$$
terms. Thus we can conclude that for all $n \geq n_0,$  taking  $\delta $ as in \eqref {eq:choiceofdelta} we have
\begin{eqnarray*}
&& \mu   \Bigg[ \mbox{ $\exists $ }  \eta \in \cA^{V_0^0 (\ell)} , \ell \geq l_0 (\eta) :  \left |  \frac { N_n^k (\eta , a) } {   N_n^k (\eta ) } - \gamma_{\{0\}} ( a|  \eta  )  \right |  \ge  \sqrt  {\delta  \gamma_{\{0\}} ( a|  \eta  )  \frac{(\log N_n^k (\eta ))^{\frac12}}{ N_n^k (\eta )  }  
 }    \nonumber \\
&& \quad \quad \quad \quad \quad \quad \quad \quad  \quad \quad \quad  \quad \quad \quad \quad \quad \quad \quad \quad    \; , {\cal A} (n , \ell, k)  \Bigg]  
\nonumber \\
&& \le  4  e^{ 2^d  \log | \cA|  \sqrt{ \log | \ll |}   }  \exp \left(  -  \frac23 \frac{2 q_{min} \delta   }{e} \sqrt{   \log | \ll |   }  \right) \\
&& = 4 \exp \left( - c (\delta )  \sqrt{ \log | \ll |}    \right) , 
\end{eqnarray*}    
where $c  (\delta ) =  \frac23 \frac{2 q_{min} \delta   }{e} - 2^d  \log | \cA| > 0 .$ This concludes the proof. 
\end {proof}

\noindent
We are now able to give the proof of Theorem \ref{theo1}.

\noindent
{\bf Proof of Theorem \ref{theo1}}
Fix $\eta \in \cA^{V_0^0 (\ell)} $ with $ \ell \geq l_0 (\eta ) ,$  let $\gamma (a) = \gamma_{\{0\}} ( a| \eta  )$, $\delta$ as in \eqref{eq:choiceofdelta} and denote by 
 $$ E_n (\eta ) = \bigcap_{ k : \| k \| \le 2 R_n } \left\{ \left |  \frac { N_n^k (\eta , a) } {   N_n^k (\eta ) } - \gamma (a)  \right |  \le  \sqrt  { \delta \gamma (a)  [   N_n^k (\eta )]^{- 1  } \sqrt{ \log N_n^k (\eta  ) }  
 }   \right\} . $$
 Then on $E_n (\eta ) , $ using Jensen's inequality, the definition of $R_n$  and $N_n^k ( \eta )  \le N_n ( \eta ) , $ 
 \begin{eqnarray}
&&  \left | \hat p_n (a| \eta ) - \gamma_{\{0\}} (a|  \eta  )\right |  \nonumber \\
&& \le \sum_{ k : \| k \| \le 2 R_n }  \left | \frac{ N_n^ k(\eta , a)}{ N_n^ k( \eta )  }  - \gamma_{\{0\}} (a|  \eta  )\right | \cdot \frac{ N_n^ k(\eta ) }{ N_n( \eta ) }\nonumber \\
&& \le \sum_{ k : \| k \| \le 2 R_n }  \sqrt{ \delta  \gamma_{\{0\}} (a| \eta  ) \frac{  \sqrt{ \log N_n^k (\eta  ) }}{N_n^k (\eta )}
 } \cdot \frac{ N_n^k ( \eta ) }{ N_n(\eta) } \nonumber \\
 && \le \sqrt{  \sum_{ k : \| k \| \le 2 R_n }    \delta  \gamma_{\{0\}} (a| \eta  )  \frac{ \sqrt{ \log N_n^k (\eta ) } }{ N_n(\eta ) }    } \nonumber \\
 && \le  \frac{( 4 R_n + 1 )^ { d/2} \delta^{1/2} \gamma_{\{0\}} (a| \eta  )^{1/2}  [ \log N_n (\eta  ) ]^{1/4} }{[ N_n( \eta ) ]^{1/2}} \nonumber \\
 && \le 5^{d/2}  \left(   \log | \ll | \right)^{\frac14} \delta^{1/2}  \gamma_{\{0\}} (a| \eta  )^{1/2}  \frac{[\log N_n (\eta ) ]^{1/4} }{ [N_n( \eta )]^{1/2} } . 
 \end{eqnarray}
 On $ \{  \log | \ll | \le \frac 32  \log N_n( \eta )   \}, $ this last expression can be bounded from above by 
 $$ 5^{d/2} ( \frac 32)^{ 1/4} \sqrt{ \delta  \gamma_{\{0\}} (a| \eta  )  \frac{ \log N_n (\eta  )  }{ N_n( \eta)  }} 
 =  \sqrt{   \kappa (\delta) \gamma_{\{0\}} (a| \eta  )  \frac{ \log N_n (\eta  )  }{ N_n( \eta)  }}, $$
where $\kappa (\delta)$ is chosen as in \eqref {eq:kappa}.
Hence we get 
 \begin{eqnarray}\label{eq:firststep}
&&  \mu  \left[ \exists \;\eta \in \cA^{V_0^0 (\ell)}, \ell \geq l_0 (\eta )    : \left | \hat p_n (a| \eta ) - \gamma_{\{0\}} (a|  \eta  )\right | >   \sqrt{ \kappa (\delta) \gamma_{\{0\}} (a| \eta  )    \frac{ \log N_n (\eta )  }{ N_n( \eta ) }}
\; ,  {\cal B}(n,   \ell)  \right ] \nonumber \\
&& \quad \quad \quad \quad \quad \quad \quad \quad \quad \quad \le   
\mu \left[ \bigcup_{\eta \in \cA^{V_0^0 (\ell)}  : \;  \ell \geq l_0 (\eta)  }  E_n(\eta )^c \; ,  {\cal B}(n,   \ell)  \right] . 
 \end{eqnarray}
 But
\begin{multline*}
  \bigcup_{\eta \in \cA^{V_0^0 (\ell)} :  \;  \ell \geq l_0 (\eta) }  E_n(\eta )^c \\
  =  \bigcup_{ k : \| k \| \le 2 R_n } \bigcup_{ \eta \in \cA^{V_0^0 (\ell)}: \;  \ell \geq l_0 (\eta)  }  
 \left\{ \left |  \frac { N_n^k (\eta , a) } {   N_n^k (\eta)   } - \gamma (a)  \right | >  \sqrt  { \delta \gamma (a) \frac{  \sqrt{ \log N_n^k (\eta ) }}{    N_n^k (\eta )} }    \right\}, 
\end{multline*} 
therefore applying  Lemma \ref{le:1} we can finally upper bound 
 $$    \mu  \left[ \bigcup_{\eta :  \;  \ell \geq l_0 (\eta)}  E_n(\eta)^c \; ,  {\cal B}(n,  \ell)  \right]  
   \le 4  (4 R_n + 1 )^d      \exp \left(  - c (\delta )   \sqrt{  \log | \ll |  }  \right)    ,  $$
for all $ n \geq n_0 .$ This finishes the proof.  
{\hfill $\bullet$ \vspace{0.25cm}}

The following lemma gives conditions ensuring that $\mu  ( {\cal B}(n , \ell)^c) $ converges to $0$ by giving
the precise rate of convergence. 

\noindent 
\begin{lemma} \label {le:g}
For  any $ 0< \e_1 <1$, $0< \e_2<1$, and for any positive $C_1$ and $ C_2$ there exists $n_0 =n_0(q_{min},  \min (\e_1,\e_2), \min (C_1,  C_2) )$ so that for $n \ge n_0$ and for any $\ell \le R_n,$  
  we have 
  \begin{equation}\label{eq:333} \mu \left( \exists \, \eta \in
      \cA^{V_0^0(\ell)} ,   \ell \geq l_0 (\eta): N_n^k (\eta ) < C_1 | \ll |^{1-\e_1} \right)
    \le \exp \left(  -  C_2 |\ll |^{ 1- \e_2} \right)  . \end {equation} 
  \end{lemma}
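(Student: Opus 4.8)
The plan is to exploit that, even without any mixing assumption, Assumption \ref{ass:ergod} makes the $\FF_{\Delta^c}$--conditional probability of \emph{every} local pattern uniformly bounded below, and that this alone controls $N_n^k(\eta)$ from below through a Chernoff / exponential--supermartingale estimate. (The constraint $\ell\ge l_0(\eta)$ plays no role here; it only lowers the number of patterns entering the final union bound.) So I would fix $\ell\le R_n$ and $k$ with $\|k\|\le 2R_n$, enumerate $\ll^k=\{j_1,\dots,j_M\}$ with $M=|\ll^k|$, and put $\Delta_m=V^0_{j_m}(\ell)$, $\HH_m=\FF_{\Delta_1\cup\cdots\cup\Delta_m}$ and $\HH_0=\{\emptyset,\Omega\}$. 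Distinct sites of $\ll^k$ lie at $\|\cdot\|$--distance $\ge 4R_n+1>2\ell$, so the $\Delta_m$ are pairwise disjoint, $1_{\{X_{j_m}^\ell=\eta\}}$ is $\HH_m$--measurable, and $N_n^k(\eta)=\sum_{m=1}^M 1_{\{X_{j_m}^\ell=\eta\}}$.

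Since $\mu$ is consistent with $\gamma$, the kernel $\gamma_{\Delta_m}(A\mid\cdot)$ is a version of $\mu(A\mid\FF_{\Delta_m^c})$. Writing $\Delta_m=\{x_1,\dots,x_r\}$ and peeling off the sites one at a time — at each step using property (b) of a specification to factor out a boundary event and Assumption \ref{ass:ergod} to bound the remaining one--point factor below by $q_{min}$ — one obtains $\gamma_{\Delta_m}(\{X_{\Delta_m}=\eta\}\mid\omega)\ge q_{min}^{|\Delta_m|}$ for \emph{every} $\omega$ and every $\eta$. Since $\HH_{m-1}\subset\FF_{\Delta_m^c}$, taking the conditional expectation gives
\begin{equation*}
\E\big[\,1_{\{X_{j_m}^\ell=\eta\}}\,\big|\,\HH_{m-1}\,\big]\;\ge\;q_{min}^{|\Delta_m|}\;=\;q_{min}^{(2\ell+1)^d-1}\;=:\;q ,
\end{equation*}
uniformly in $m$, $\eta$ and the history.

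Next comes the lower--tail Chernoff step. With $\lambda=1$ one has $\E\big[e^{-1_{\{X_{j_m}^\ell=\eta\}}}\mid\HH_{m-1}\big]=1-(1-e^{-1})\,\E\big[1_{\{X_{j_m}^\ell=\eta\}}\mid\HH_{m-1}\big]\le e^{-q(1-e^{-1})}$, so that $Z_m=\exp\big(-\sum_{i\le m}1_{\{X_{j_i}^\ell=\eta\}}+m\,q(1-e^{-1})\big)$ is a supermartingale with $\E Z_M\le 1$, and Markov's inequality yields, for any $a>0$,
\begin{equation*}
\mu\big(N_n^k(\eta)<a\big)\;\le\;\exp\big(a-Mq(1-e^{-1})\big)\;\le\;\exp\big(a-\tfrac12 Mq\big).
\end{equation*}
Then I would check that $Mq$ is of order $|\ll|^{1-o(1)}$: from $R_n\le(\log|\ll|)^{1/2d}$ one gets, for $n$ large, $(2\ell+1)^d-1\le(2R_n+1)^d\le 3^d\sqrt{\log|\ll|}$ and $(4R_n+1)^d\le 5^d\sqrt{\log|\ll|}$, and since $\ll$ exhausts $\Z^d$ while $4R_n+1=o(|\ll|^{1/d})$, every residue class obeys $M=|\ll^k|\ge\tfrac12|\ll|(4R_n+1)^{-d}$; hence
\begin{equation*}
Mq\;\ge\;\frac{|\ll|}{2\cdot 5^d\sqrt{\log|\ll|}}\,\exp\big(-3^d\log(1/q_{min})\sqrt{\log|\ll|}\big)\;\ge\;|\ll|^{\,1-\frac12\min(\e_1,\e_2)}
\end{equation*}
once $n$ is large enough, depending only on $q_{min}$ and $\min(\e_1,\e_2)$.

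Finally I would union--bound over the at most $|\cA|^{(2\ell+1)^d}\le\exp\big(3^d\log|\cA|\sqrt{\log|\ll|}\big)$ patterns $\eta\in\cA^{V_0^0(\ell)}$ and take $a=C_1|\ll|^{1-\e_1}$, getting
\begin{equation*}
\mu\Big(\exists\,\eta:\ N_n^k(\eta)<C_1|\ll|^{1-\e_1}\Big)\;\le\;\exp\Big(3^d\log|\cA|\sqrt{\log|\ll|}+C_1|\ll|^{1-\e_1}-\tfrac12 Mq\Big).
\end{equation*}
Because $\tfrac12 Mq\ge\tfrac12|\ll|^{1-\frac12\min(\e_1,\e_2)}$ eventually dominates $C_1|\ll|^{1-\e_1}+C_2|\ll|^{1-\e_2}$ together with the sub--polynomial factor $3^d\log|\cA|\sqrt{\log|\ll|}$, for all $n\ge n_0(q_{min},\min(\e_1,\e_2),C_1,C_2)$ the exponent is $\le -C_2|\ll|^{1-\e_2}$, which is \eqref{eq:333}. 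The one genuinely non--routine ingredient is the uniform conditional lower bound $\gamma_{\Delta_m}(\{X_{\Delta_m}=\eta\}\mid\omega)\ge q_{min}^{|\Delta_m|}$; it is precisely what lets us dispense with Assumption \ref{ass:mixing} and the Dedecker--type inequalities of Section \ref{section4} when treating overestimation, all the rest being bookkeeping with the scale $R_n\sim(\log|\ll|)^{1/2d}$.
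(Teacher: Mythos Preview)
Your proof is correct and follows essentially the same route as the paper: both exploit the uniform lower bound $\gamma_{\Delta}(\{\eta\}\mid\omega)\ge q_{min}^{|\Delta|}$ coming from Assumption~\ref{ass:ergod}, turn it into an exponential lower--tail bound for $N_n^k(\eta)$ (the paper via a conditional Hoeffding inequality quoted from Csisz\`ar--Talata, you via an explicit supermartingale/Chernoff step), and then absorb the union bound over $\eta$ and the sublattice factor $(4R_n+1)^{-d}$ using $R_n^d\sim\sqrt{\log|\ll|}$. Your remark that the restriction $\ell\ge l_0(\eta)$ plays no role in this lemma is also in line with the paper's argument.
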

   
\begin {proof} 
Fix some $\eta $ with $  l_0 (\eta) \le \ell \le R_n .$ Then $ \{ 1_{\eta} (X_j^\ell )  ,  \; j \in \ll^k \}$  is a collection of conditional independent random variables, conditioned on  fixing the configuration $ \omega  ( \ll \setminus \cup_{j \in \ll^k }  V_j (\ell))$.  By {Assumption}  \ref {ass:ergod},  we have that  
   $$ \mu  (X_j^ \ell = \eta  ) \ge q_{min}^{(2 \ell)^d } .$$ 
Here we have used that $ | V^0_0(\ell) |=  (2\ell )^d$.
Then a conditional version of the Hoeffding inequality, see for example Lemma A3 in Csisz\`ar and Talata (2006), \cite{CT}, yields
 \begin{equation}\label{eq:1a}
  \mu  \left [ \frac {   N_n^k (\eta  ) } { |\ll^k| }  < \frac 12 q_{min}^{(2 \ell)^d} |  \omega  ( \ll \setminus \cup_{j \in \ll^k }  V_j (\ell) )\right ]   \le   e^{ -  |\ll^k|  \frac {q_{min}^{(2\ell)^d }} {16}  }.  \end {equation}
As a consequence, we obtain also for the unconditioned probability,
  \begin{equation}\label{eq:2first}
  \mu \left [ \frac {   N_n^k (\eta ) } { |\ll^k| }  < \frac 12 q_{min}^{(2\ell)^d} \right ]   \le   e^{ -  |\ll^k|  \frac {q_{min}^{(2\ell)^d }} {16}  },\end {equation}
 and thus 
 \begin{equation}\label{eq:2}
 \mu  \left [ \exists \, \eta \in \cA^{V_0^0(\ell)} , \;  \ell \geq l_0 (\eta) \, : \frac {  N_n^k (\eta  ) } { |\ll^k| }  <  \frac 12 q_{min}^{(2\ell)^d} \right ]   \le  | \cA|^{ (2 R_n)^d}  e^{ -  |\ll^k|  \frac {q_{min}^{(2\ell)^d }} {16}  }.
 \end{equation}
 To obtain \eqref {eq:333}  we need to compare
 $ | \ll |$ to $  |\ll^k|$.  By construction we have for $n $ sufficiently large, 
 \begin{equation}\label{eq:4}
 |\ll^k|  \ge   \frac {|\ll |} { (4R_n +1)^d}  \ge  \frac {|\ll |} { (5R_n)^d  } .\end {equation}
This and \eqref  {eq:2}     imply that
\begin{equation}\label{eq:2a}
\mu   \left [  \exists \, \eta \in  \cA^{V_0^0(\ell)} , \;  \ell \geq l_0 (\eta) \, :   N_n^k (\eta  )   < \frac 12 q_{min}^{(2\ell)^d}     \frac {|\ll |} { (5R_n)^d }   \right ]   \le   | \cA|^{ (2 R_n)^d}   e^{ -  \frac {|\ll |} { (5R_n)^d } \frac {q_{min}^{(2\ell)^d }} {16}  }.  \end {equation}
But 
\begin{equation}\label{eq:important}
  q_{min}^{(2\ell)^d}     \frac {|\ll |} { (5R_n)^d }   \ge      |\ll |    
      \frac { q_{min}^{(2R_n)^d}  } { (5R_n)^d }. 
\end{equation}      
By  the definition of $R_n$  in \eqref {eq:rn},  $R_n^d =  \sqrt{\log |\ll | }.$ 
Thus for any $C>0$ and for any  $\epsilon>0$  there exists $n_0= n_0(q_{min}, \epsilon, C)$  so that for $n \ge n_0 , $
 $$     |\ll |^\epsilon  \frac { q_{min}^{(2R_n)^d}  } { (5R_n)^d } =   |\ll |^\epsilon 
 \frac { e^{2^d \sqrt{\log |\ll | } \log q_{min} } }  { 5^d  \sqrt{\log |\ll | }} \ge C  . $$
 This and \eqref  {eq:2a} imply  that for any $\e_1>0$ and $\e_2>0$, positive $C_1$ and $ C_2,$ 
 for  $n\ge n_0,  $ we have  
  \begin{equation}\label{eq:2b}
  \mu  \left [ \exists \, \eta \in  \cA^{V_0^0(\ell)} , \;  \ell \geq l_0 (\eta) \, :  N_n^u (\eta  )   <  C_1    |\ll |^{ 1-\e_1}  \right ]   \le   | \cA|^{ (2 R_n)^d}   e^{ - 2  C_2    |\ll |^{ 1-\e_2} }.  \end {equation}
  Finally, note that for $n \geq n_0 ,$
  $$ | \cA|^{ (2 R_n)^d} = e^{ 2^d \log | \cA | \sqrt{ \log | \ll | } } \le     e^{  C_2    |\ll |^{ 1-\e_2} } .$$ 
 Thus we have proved the lemma.
  \end{proof}

\section{Proof of Theorem \ref{theomain1}}
We show the probability of overestimation \eqref {eq:badupperbound}. Recall the  definition of the set  $  {\cal B} (n,  R_n) $  given in  \eqref{eq:rest2}. Clearly,
\begin {equation}  \label {ro1a}  \begin{split}
    \mu ( \exists \; i \in \ll : \hat l_n (i) > l_i (\sigma ) )  &\le    \mu ( {\cal B} (n,  R_n)^c ) \\
    &  + \mu ( \exists \; i \in \ll : \hat l_n (i) > l_i( \sigma ) , {\cal B} (n,  R_n) ). \\
  \end {split} \end {equation}  The first term is estimated by
Lemma \ref{le:g}, choosing $\e_1 = \frac13 , \e_2 = \e, $ $C_1 = 1, C_2 =2.$ This yields
 $$  \mu  ( ({\cal B} (n,  R_n))^c ) \le ( 4 R_n +1)^d e^{ - 2  | \ll |^{1 - \e } } ,$$
 for all $ n \geq n_0 $ where $n_0 $ depends on the choices $ \e_1=
 \frac 13 , \e_2 = \e , C_1= 1,C_2 = 2$ and   $q_{min}$.  Since 
 $$ ( 4 R_n +1)^d  \le C(d) \sqrt{ \log | \ll |  } \le C(d) e^{    | \ll |^{1 - \e} }, $$
 eventually, we have that for all $ n \geq n_0$
\begin {equation}  \label {ro1b}  \mu  ( ({\cal B} (n,  R_n))^c ) \le C(d) e^{ -  | \ll |^{1 - \e } }. \end {equation}
 
We now study the last term of \eqref {ro1a}. We are interested in the event $ \{ \hat l_n (i) = \ell > l_i (\sigma ) \} . $
 Note that  $\ell > l_i( \sigma )$ implies that for any $j$ such that $ X_j^{\ell-1} = \sigma_i^{{\ell}-1},$ necessarily
 $ l_j (\sigma ) = l_i (\sigma ) \le \ell - 1 $ and 
 as a consequence $ \gamma_j (\cdot | X_j^{\ell -1}) = \gamma_i (\cdot | \sigma_i^{{\ell}-1} ).$ 
 
 Hence, for any $ \ell > l_i (\sigma ),$ we
 have, by \eqref {eq:loglikelihood4}
\begin{eqnarray}\label{eq:ub1}
\log L_n (i,\ell)&= &    \sum_{j : \;  X_j^{\ell-1} = \sigma_i^{{\ell}-1}  }  \frac{1}{N_n (X_j^\ell) }\left [ \log  MPL_n (j,\ell) - \log  MPL_n (i,\ell-1)\right ] \nonumber \\
& \le &   \sum_{j : \;  X_j^{\ell-1} = \sigma_i^{{\ell}-1}  } \frac{1}{N_n (X_j^\ell) } \left [ \log  MPL_n (j,\ell) - \log  PL_n^{ (i,\ell-1)} ( \gamma_i (\cdot | \sigma_i^{{\ell}-1} ) ) \right ]\nonumber \\
& = &  \sum_{j : \;  X_j^{\ell-1} = \sigma _i^{{\ell}-1}  } \frac{1}{N_n (X_j^\ell) } \sum_{a \in A} \left( N_n( X_j^\ell , a) \log \left[\frac{ \hat{p}_n ( a | X_j^\ell) }{ \gamma_i ( a | \sigma_i^{{\ell}-1} ) } \right] \right) \nonumber \\
& = &  \sum_{j : \;  X_j^{\ell-1} = \sigma_i^{{\ell}-1}  }  \sum_{a \in A} \left(  \hat{p}_n ( a | X_j^\ell) \log \left[\frac{ \hat{p}_n ( a | X_j^\ell) }{\gamma_i (a | \sigma_i^{{\ell}-1} ) } \right] \right) \nonumber \\
& \le &  \sum_{j : \;  X_j^{\ell-1} = \sigma_i^{{\ell}-1}  }  \sum_{a \in A} \frac{\left( \hat{p}_n ( a | X_j^\ell) - \gamma_i (a | \sigma_i^{{\ell}-1} ) \right)^2
}{\gamma_i (a | \sigma_i^{{\ell}-1} )} \nonumber \\
&=&\sum_{j : \;  X_j^{\ell-1} = \sigma_i^{{\ell}-1}  }  \sum_{a \in A} \frac{\left( \hat{p}_n ( a | X_j^\ell) - \gamma_j (a | c_j (\sigma) ) \right)^2
}{\gamma_j (a | c_j (\sigma) )}.
\end{eqnarray}
We  used that for any two probability distributions $P$ and $Q$ on $A, $
\begin{equation}\label{eq:divergence}
 \sum_a P(a) \log \frac{P(a)}{Q(a)} \le \sum_a \frac{(P(a) - Q(a))^2}{Q(a)} 
\end{equation} 
 (see Csisz\`ar and Talata \cite{CTbis}), 
and in the last line the fact that $ \{ X_j^{\ell-1} = \sigma_i^{{\ell}-1} ,  \ell > l_i (\sigma ) \} $ implies $ \gamma_j (a | c_j (\sigma) ) = \gamma_i (a|  \sigma_i^{{\ell}-1} ) .$ Hence,  writing  for short $\gamma_{j}
 (\cdot) = \gamma_{j} ( \cdot | c_{j} (\sigma))$,  define 
\begin{multline*}
 E_\ell =\\
    \left\{ \forall \, j \in \ll ,  l_j (\sigma ) < \ell, \forall a \in \cA : \left | \hat p_n (a| \sigma_j^{\ell}) - \gamma_{j} (a)\right | \le     \sqrt {\kappa (\delta)  \gamma_{j } (a) 
\frac{  
{ \log N_n (\sigma_j^{\ell})  }}{ N_n (\sigma_j^{\ell})}  } \right\} ,
\end{multline*}
where $ \delta$  is as in \eqref {eq:choiceofdelta}  and  $\kappa (\delta) $  is  defined in \eqref {eq:kappa}. 
Then on $E_\ell ,$ \eqref{eq:ub1} can be bounded uniformly in $i \in \ll $ from above by 
\begin{multline*}
   \sum_{j : \;  X_j^{\ell-1} = \sigma_i^{{\ell}-1}  }  \sum_{a \in \cA} \kappa (\delta ) \frac{\log N_n ( X_j^\ell )  }{ N_n (X_j^\ell)} 
= \sum_{ v \in \cA^{\partial V_0 (\ell) } } N_n( \sigma_i^{\ell -1} v ) \sum_{a \in \cA} \kappa (\delta ) \frac{\log N_n ( \sigma_i^{\ell -1} v )  }{ N_n (\sigma_i^{\ell -1} v)} \\
\le \kappa (\delta) | \cA| \; |\cA|^{ |\partial V_0(\ell)| } \log |
\ll | .
\end{multline*} 
Hence,  on $\bigcap_{\ell = 1 }^{ R_n} E_\ell ,$ for all $i \in \ll : $ We have for all  $ \ell > l_i (\sigma) , $ 
$$  \log L_n (i,\ell) \le  \kappa(\delta) | \cA|  |\cA|^{| \partial V_0(\ell) |   }  \log | \ll |  = pen (\ell , n) .$$
This implies that $ \hat l_n (i ) \le l_i (\sigma ) $ by definition of the estimator.   Thus
$$ \mu  (\exists \; i \in \ll :  \hat l_n(i) > l_i(\sigma) , {\cal B} (n, R_n) ) \le  \sum_{ \ell =1 }^{R_n} \mu   ( E_\ell ^c , {\cal B} (n,   R_n) ). $$
But 
\begin{multline*}
 E_\ell ^c \subset \\ 
 \left\{ \exists a\in \cA, \exists \, \eta \in \cA^{V_0^0 (\ell  )} : l_0 (\eta ) \le \ell  ,  \left | \hat p_n (a| \eta ) - \gamma_{\{0 \}} (a| \eta  )\right | >   \sqrt {\kappa (\delta)  \gamma_{\{0 \}} (a|\eta ) 
\frac{  
{ \log N_n (\eta )  }}{ N_n (\eta )}  }\right\} .
\end{multline*}
Hence by Theorem \ref{theo1}, for $ n \geq n_0 ,$  we have 
\begin {equation}  \label {mars4} \sum_{ \ell =1 }^{R_n} \mu  ( E_\ell ^c , {\cal B} (n,   R_n) ) \le  | \cA| C(d)R_n^{d+1}   
   \exp \left(  -c (\delta) \sqrt{     \log | \ll |    }  \right) .   \end  {equation}
By definition of $R_n,$  
\begin {equation}  \label {mars5}  R_n^{d+1}   
\le( \log \ll )^{ \frac{d+1}{2d} }.    \end  {equation}
 Taking into account \eqref {ro1a}, \eqref {ro1b}, \eqref  {mars4} and    \eqref  {mars5}    we get \eqref     {eq:badupperbound}. 
This finishes the proof of Theorem \ref{theomain1}.

\section{Proof of Theorem \ref{theomain2}}
We now turn to the problem of underestimation. We suppose that $n_0$ is sufficiently large such that $R_n \geq L .$ 
Fix $i \in \ll $ and suppose that $\hat l_n (i) < l_i(\sigma) .$ Since $l_i (\sigma ) \le L \le R_n,$   {this implies by definition 
of the estimator that for
$\ell = l_i(\sigma), $  
\begin{equation}\label{eq:aha}
\log L_n (i,\ell) \le pen (\ell ,
n) .
\end{equation}   
Recall that by (\ref{eq:loglikelihood4}),
$$ \log L_n (i,\ell) =  \left [  \sum_{j : \;  X_j^{\ell-1} = \sigma_i^{{\ell}-1}  } \frac{1}{N_n (X_j^\ell) }  \log  MPL_n (j,\ell) \right ]- \log  MPL_n (i,\ell-1). $$
By definition of $\Delta_n (\eta) $ in (\ref{eq:deltan}) we can write 
\begin{eqnarray*}
  \frac{1}{| \ll |} \log L_n (i, \ell ) & =&  
  \left(  \sum_{j : \;  X_j^{\ell-1} = \sigma_i^{{\ell}-1}  }  \frac{1}{N_n (X_j^\ell) } \Delta_n (X_j^\ell)\right)  - \Delta_n (\sigma_i^{\ell - 1}) \\
  && +  \left(  \sum_{ v \in \cA^{\partial V_0(\ell)    }  }  \sum_{a \in \cA} p (( \sigma_i^{\ell -1},v , a)) \log p (a | \sigma_i^{\ell -1}, v ) \right)
  \\
  &&- \sum_{a \in \cA} p ( ( \sigma_i^{\ell - 1 } , a )) \log p ( a | \sigma_i ^{\ell - 1 } )\\
   & =&  
  \left(  \sum_{v \in \cA^{\partial V_0 (\ell )}   }  \Delta_n (\sigma_i^{\ell -1} v)\right)  - \Delta_n (\sigma_i^{\ell - 1}) \\
  && +  \left(  \sum_{ v \in \cA^{\partial V_0(\ell)    }  }  \sum_{a \in \cA} p (( \sigma_i^{\ell -1},v , a)) \log p (a | \sigma_i^{\ell -1}, v ) \right)
  \\
  &&- \sum_{a \in \cA} p ( ( \sigma_i^{\ell - 1 } , a )) \log p ( a | \sigma_i ^{\ell - 1 } ) .
\end{eqnarray*}
Set 
\begin{eqnarray}\label{eq:delta}
D(i, \ell , \sigma  )&= &\! \! \! \!    \sum_{v \in \cA^{\partial V_0 (\ell) } }    \sum_{a \in \cA} p(( \sigma_i^{\ell -1}, v , a)) \log p (a | \sigma_i^{\ell -1}, v )\nonumber \\
 && - \sum_{a \in \cA} p ( ( \sigma_i^{\ell - 1 } , a )) \log p ( a | \sigma_i ^{\ell - 1 }) .
\end{eqnarray}
Moreover,  for a constant $t  > 0$ that will be chosen later, define 
$$ E(t  , \ell ) = \{\forall \; \eta \in \cA^{V_0^0 (\ell - 1 )}, \; \forall \; v \in \cA^{\partial V_0 (\ell)}  :  |  \Delta_n (\eta v) | \le \frac t2 \frac{1}{ | \cA |^{ | \partial V_0 (\ell ) | } } ,
\; 
 \; | \Delta_n  (\eta ) | \le t / 2 \} .$$ 
Then on $ E(t ,  \ell ) , $ 
\begin{eqnarray*}
 \frac{1}{| \ll |} \log L_n (i, \ell )& \geq&  D (i, \ell , \sigma  ) - t .
\end{eqnarray*}
Next we  show that $ D (i, \ell , \sigma  ) $ can be bounded away from zero. Taking into account
$$    \sum_{ v \in \cA^{\partial V_0(\ell) }  } p (( \sigma_i^{\ell -1},v , a)) = p ( ( \sigma_i^{\ell - 1 } , a )),$$
we can write  
$$    D (i, \ell , \sigma  ) = \! \! \! \!     \sum_{ v \in \cA^{\partial V_0(\ell) }  }  \sum_{a \in \cA} p (( \sigma_i^{\ell -1},v , a))   
\log \frac{ p (a | \sigma_i^{\ell -1},v )  }{ p ( a | \sigma_i ^{\ell - 1 }) }.$$
By  Pinsker's inequality for relative entropy (see for example Fedotov et al. (2003), \cite{fedotov}), we have that 
for $P$ and $Q$   probability distributions on $\cA,$  
$$ \sum_a P(a) \log \frac{P(a)}{Q(a)} \geq \frac12 || P- Q ||^2_{TV} .$$
Moreover,
$$ || P- Q ||^2_{TV} \geq \sup_a (P(a) - Q(a))^2 .$$    
But,  since $ \ell =  l_i(\sigma),$  there exist $v$ and $a$ such that $  p  (a | \sigma_i^{\ell - 1 }) \neq p   (a | \sigma_i^{\ell -1  },v ) .$
Hence we have that $  D (i, \ell , \sigma  ) > 0 .$
Since we are working under the assumption  that $l_i (\omega ) \le  L $ for all $\omega$ (recall condition (\ref{eq:fin}) of Assumption \ref{ass:mixing}), we can thus conclude that 
\begin{equation}\label{eq:lowerbounddelta}
D (i, \ell , \sigma ) \ge  d_0 > 0,
\end{equation}
where 
 $$ d_0 = \inf_{i, \sigma} D (i, l_i (\sigma )  , \sigma ) > 0 .$$ 
Choosing now $t   = \frac {d_0}   2 ,$ we finally obtain that on $E( \frac{d_0}{2}, \ell),$ for $\ell = l_i (\sigma ) ,$ 
$$ \log L_n ( i, \ell ) - pen (\ell ,n) 
\geq | \ll | \frac {d_0} { 2} - pen ( \ell , n) > 0  $$
for $n \geq n_0 ( i), $ since $pen( \ell ,n) =   \kappa |\cA|  |\cA|^{|\partial V_0(\ell)|   }  \log | \Lambda_n| = O (  \log | \Lambda_n| ) . $ 
This is in contradiction to (\ref{eq:aha}) and implies that $\hat l_n (i) \geq l_i ( \sigma ) . $ Hence we conclude that 
\begin{equation} \label{mars7}
 \mu [\exists i \in \Lambda_n:  \hat l_n (i) <  l_i(\sigma) ]  \le     \sum_{ \ell \le L } \mu  \left [  E \left( \frac {d_0} {2}, \ell\right)^c  \right]. 
\end{equation}
We use (\ref{eq:deltan2}) and sum over all possibilities of choosing $\eta \in \cA^{V_0^0 (\ell - 1)} $ and of choosing patterns $ X_j^\ell $ such that $X_j^{\ell - 1 } = \eta, $ which gives $|\cA |^{ | V_0 (\ell )  |} $ terms. But since for $\ell \le L ,$ $ |\cA |^{ | V_0 (\ell )  |} \le |\cA |^{ (2L+1)^d },$ we finally obtain 
\begin{eqnarray*}
&&  \mu \left( \left( E (\frac {d_0} {2} , \ell )\right)^c  \right) \\
&& \le 3  |\cA | e^{ 1/e} \left( |\cA|^{(2L+1)^d}   \right) 
\exp \left(  - C(d,L) \frac{|\ll |     {d_0}   \alpha^2_0 }{ 8  |\cA |^2  [\log^2 \alpha_0 ] |\cA |^{ | \partial V_0 (\ell )  |}   e } \right) \\
&& \le 3 |\cA | e^{ 1/e} \left(|\cA |^{ (2L+1)^d }   \right) 
\exp \left(  - \frac{C(d,L) |\ll |   {d_0}   \alpha^2_0   }{ 8 |\cA |^2  [\log^2 \alpha_0 ] |\cA |^{ | \partial V_0 (L) | }  e } \right)\\
&& = 3 |\cA | e^{ 1/e} \left(|\cA |^{ (2L+1)^d }   \right) 
\exp \left(  - C_2 (d,L, q_{min}) |\ll |   \right)
\end{eqnarray*}
(recall the control of $\alpha_0$ given in (\ref{LE.1})), where $C_2 (d,L, q_{min}) $ is another constant depending on the dimension $d ,$ on the interaction range $L$ and on $q_{min} .$  

Thus, we can conclude that  for any   $0 < \e < 1 $  there exists $n_0 = n_0 (\e, q_{min}, L,d )$ such that for all $ n \geq n_0,$ 
\begin{equation}\label{mars6}
 \sum_{ \ell \le L } \mu  \left(  E \left( \frac {d_0} {2}, \ell\right)^c  \right) \le      \exp \left( -  | \ll |^{1 - \e } \right). 
\end{equation}
\eqref {mars7} and \eqref {mars6} together conclude the proof of Theorem \ref{theomain2}.  $\bullet$

\section{Final comments} 
  We generalize the concept of chains with memory of variable length to the
multidimensional case of random fields. The main aim of this concept is to 
adopt a parsimonious way of describing data: the symbol at site $i$ is influenced  only by        a random set of symbols, the set depends     on the  observed   data.  
As in the case of one dimensional models, the set of relevant neighbor states of site $i$ is called
the {\it context}.   The radius of the smallest ball containing the support of the context  is  the {\it length of the context} of site $i.$

We presented in Section \ref{section3} an estimator of the context length function based on a sequence of local
decisions between two possible context lengths. These decisions are performed using the log likelihood ratio function. 
In the case of dimension one, our estimator is simply the context length estimator of 
variable length chains which has been classically considered in the literature.  We refer the interested reader
to Galves and L\"ocherbach (2008), \cite{GL}, for a survey and bibliographic comments.

\section{Appendix}
At the end of Section \ref{notation} we argued that in order to estimate the context of a finite set of sites $\Lambda 
\Subset \Z^d$ it is sufficient to estimate the contexts of the one-point specification. In particular, we
stated formula (\ref{eq:freiburg1}), which relates $c_\Lambda (\omega )$ to $c_i (\omega). $ 
In the first subsection of this appendix we show 
this.   
In the second subsection, we complete  the computations 
 for the example 
 \ref{ex:ex2}.
Finally we state a deviation 
inequality needed in  Section \ref{section:5}.
  
\subsection{From one point specifications to several points}
It is well known  in Statistical Mechanics that the positive one point specification uniquely determines the family of specifications, see Theorem 1.33 of Georgii (1988), \cite{geo88}.
This result still holds for   Variable-neighborhood   random fields, 
since they can be embedded   into     classical  random fields.  
But we would like to  determine if and how the context of one single site  determines the  $ \Lambda-$contexts of the  specification, for any $ \Lambda \Subset \Z^d$.  Proposition \ref  {p1}  gives an answer.

We consider
   local  specifications   $\gamma$ which are
 positive, i.e. 
 $$ \gamma_\L ( \om_\L| \cdot ) >0,\quad   \hbox {for all }  \om_\L \in \cA^\L \quad   \hbox {and }  \quad \L \Subset \Z^d. $$
    In the following it will be convenient to write
$$ \gamma_\L (\{ \omega_\L\} | \sigma ) = \varrho_\L ( \omega_\L
  \sigma).$$
This family  $ \{ \varrho_\L, \L \Subset \Z^d\}$ is a family of functions 
$ \varrho_\L : \Omega \to [0, 1] $ satisfying the following two conditions: 
   \begin{equation}
    \sum_{\omega_\Lambda\in \cA^{\Lambda}} \rho_\L (\om_\Lambda \sigma)
      \;=\; 1 , \quad \forall \sigma \in \Omega,
    \label{eq:5a}
  \end{equation}
  and for every $\Lambda\subset\Delta\Subset\lat$, all $\omega, \eta, \sigma$   in 
  $\Omega $  we have
  \begin{equation}
  \frac { \rho_\Delta(\omega_\Lambda \sigma_{\Delta \setminus \Lambda} \sigma_{\Delta^c})}  {
  \rho_\Delta(\eta_\Lambda \sigma_{\Delta \setminus \Lambda} \sigma_{\Delta^c})}
  =   \frac { \rho_\Lambda (\omega_\Lambda   \sigma_{\Lambda^c})}  {
   \rho_\Lambda (\eta_\Lambda   \sigma_{\Lambda^c})}.
    \label{eq:cons}
  \end{equation}

    \noindent
    \begin {prop} \label {p1}   Assume that the family of  local specifications
      $ \gamma$ defined in \eqref {s2} is positive  \footnote {The positivity requirement can be relaxed,  under some minor modifications of  the  proof, see Georgii (1988), \cite{geo88},  Theorem 1.33. }.
We have the    following:
\begin {itemize}
\item  $\gamma$ is uniquely determined by $ \{  \gamma_{\{i\}} (\cdot| c_i(\om)), \,i \in \Z^d, \om \in \Omega\}$.
\item  For $ \Lambda \Subset \Z^d$,
\begin{equation}\label{eq:freiburg1bis}
  \supt_{\Lambda} (\omega)=  \cup_{\omega_\Lambda} \left ( \cup_{i \in \Lambda} \supt_{\{i\}}(\omega)\right) \setminus \Lambda.   
\end{equation}  
 \end {itemize} 
\end {prop}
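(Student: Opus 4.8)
The plan is to reconstruct the block kernels $\gamma_\Lambda$ from the one-point kernels by a site-by-site telescoping argument, and then to read off the support of the reconstructed kernel. For the first assertion I would fix $\Lambda=\{i_1,\dots,i_k\}\Subset\Z^d$, a reference configuration $\eta$, and interpolate between $\eta_\Lambda$ and a target $\omega_\Lambda$ by changing one coordinate of $\Lambda$ at a time; each one-coordinate change is, by the consistency identity \eqref{eq:cons} applied to the pair $\{i_m\}\subset\Lambda$, a ratio of one-point quantities $\gamma_{i_m}(\cdot\mid\cdots)$. Multiplying over $m=1,\dots,k$ writes $\rho_\Lambda(\omega_\Lambda\,\omega_{\Lambda^c})$ as $\rho_\Lambda(\eta_\Lambda\,\omega_{\Lambda^c})$ times an explicit product of such one-point terms, and summing over $\omega_\Lambda$ together with the normalisation \eqref{eq:5a} pins down the prefactor $\rho_\Lambda(\eta_\Lambda\,\omega_{\Lambda^c})$; this is the argument behind Theorem 1.33 of Georgii (1988), \cite{geo88}, here using positivity of $\gamma$. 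Since by Definition \ref{D23} the kernel $\gamma_i(\cdot\mid\omega_{\{i\}^c})$ coincides with $\gamma_i(\cdot\mid c_i(\omega))$, knowing the latter for all $i$ and all $\omega$ determines every $\gamma_i$, hence every $\gamma_\Lambda$. The same computation, carried out for the single pair $\{i\}\subset\Lambda$ only, also yields the dual formula $\gamma_i(a\mid\tau)=\rho_\Lambda(a\,\tau_{\Lambda\setminus\{i\}}\,\tau_{\Lambda^c})/\sum_{b\in\cA}\rho_\Lambda(b\,\tau_{\Lambda\setminus\{i\}}\,\tau_{\Lambda^c})$, recovering a single-site kernel from the block kernel, which I will use below.

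Next I would prove \eqref{eq:freiburg1bis}. Write $S:=\big(\bigcup_{\omega_\Lambda\in\cA^\Lambda}\bigcup_{i\in\Lambda}\supt_{\{i\}}(\omega_\Lambda\,\omega_{\Lambda^c})\big)\setminus\Lambda$ for the right-hand side, the outer configuration $\omega_{\Lambda^c}$ being fixed. For the inclusion $\supt_\Lambda(\omega)\subset S$ I would use the reconstruction formula: every factor appearing there has the form $\gamma_{i_m}(\cdot\mid\xi\,\omega_{\Lambda^c})$ with $\xi$ a configuration on $\Lambda\setminus\{i_m\}$, and such a factor depends on $\omega_{\Lambda^c}$ only through the restriction of $\omega$ to $\supt_{\{i_m\}}(\cdot)\cap\Lambda^c=\supt_{\{i_m\}}(\cdot)\setminus\Lambda$, a set contained in $S$ by definition; the normalising prefactor depends on $\omega_{\Lambda^c}$ in the same way. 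Hence $\gamma_\Lambda(\cdot\mid\omega_{\Lambda^c})$ factors through $\omega_S$, and since $S\subset\Lambda^c$ the minimality clause of Definition \ref{D23} gives $\supt_\Lambda(\omega)\subset S$.

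For the reverse inclusion $S\subset\supt_\Lambda(\omega)$ I would argue by contraposition. Let $j\in S$, so $j\notin\Lambda$ and $j\in\supt_{\{i\}}(\nu)$ for some $i\in\Lambda$ and some $\nu$ with $\nu_{\Lambda^c}=\omega_{\Lambda^c}$, and suppose $j\notin\supt_\Lambda(\omega)$. Then $\gamma_\Lambda(\cdot\mid\tilde\sigma_{\Lambda^c})$ is unchanged when one varies $\tilde\sigma(j)$ while keeping $\tilde\sigma$ equal to $\omega$ on $\supt_\Lambda(\omega)$; since $\nu$ agrees with $\omega$ on all of $\Lambda^c\supset\supt_\Lambda(\omega)$, it follows that $\rho_\Lambda(\omega'_\Lambda\,\omega_{\Lambda^c\setminus\{j\}}\,a)$ does not depend on $a\in\cA$ at $j$, for every $\omega'_\Lambda\in\cA^\Lambda$. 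Plugging this into the dual formula for $\gamma_i$ shows that $\gamma_i(\cdot\mid\tau)$ is independent of $\tau(j)$ for $\tau$ ranging over configurations equal to $\nu$ off $\{j\}$; taking $\tau=\nu$ gives $\gamma_i(\cdot\mid\nu)=\gamma_i(\cdot\mid\nu_{\{j\}^c})$, contradicting the minimality of $\supt_{\{i\}}(\nu)$. Hence $j\in\supt_\Lambda(\omega)$, which completes the proof.

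I expect the main obstacle to be bookkeeping rather than anything conceptual: tracking through the telescoping product exactly which coordinates of the outer configuration each one-point factor sees, and using the minimality definition of the contexts with care — in particular the self-consistency of the context cells (if $\tilde\sigma$ agrees with $\omega$ on $\supt_\Lambda(\omega)$ then $\supt_\Lambda(\tilde\sigma)=\supt_\Lambda(\omega)$, and similarly for single sites, which is precisely the partition property recorded after Definition \ref{D23}), which is what legitimises the manipulations in the last two paragraphs.
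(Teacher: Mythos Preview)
Your argument is correct, and in fact more complete than what the paper gives. Both you and the paper reconstruct $\rho_\Lambda$ from single-site data and then read off the dependence set; the difference is in the bookkeeping and in how far each proof goes.

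For the reconstruction itself, the paper proceeds by induction on $|\Lambda|$ via a two-block splitting formula: writing $\Lambda=\Lambda_1\cup\Lambda_2$ with $\Lambda_1\cap\Lambda_2=\emptyset$, they set
\[
\rho_\Lambda(\omega)=\frac{\rho_{\Lambda_1}(\omega)}{\displaystyle\sum_{\bar\omega_{\Lambda_1}}\rho_{\Lambda_1}(\bar\omega_{\Lambda_1}\omega_{\Lambda_1^c})\,\rho_{\Lambda_2}(\bar\omega_{\Lambda_1}\omega_{\Lambda_1^c})^{-1}}
\]
and then verify (by repeated use of the single-site DLR identity) that this kernel satisfies $\int f\,d\mu=\int\sum_{\omega_\Lambda}f\rho_\Lambda\,d\mu$, whence it coincides with $\gamma_\Lambda$. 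Your site-by-site telescoping via the consistency relation \eqref{eq:cons} is the other standard route through Georgii's Theorem 1.33 and is equally valid; it has the advantage that the dependence of each factor on $\omega_{\Lambda^c}$ is more transparent, which is exactly what you exploit next.

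For the support identity \eqref{eq:freiburg1bis}, the paper is in effect content with the inclusion $\supt_\Lambda(\omega)\subset S$: from the inductive formula above they observe that, for each fixed $\omega_\Lambda$, the map $\omega_{\Lambda^c}\mapsto\rho_\Lambda(\omega)$ is $\FF_S$-measurable, and then simply \emph{set} $\supt_\Lambda(\omega):=S$. The minimality clause of Definition \ref{D23} yields $\supt_\Lambda(\omega)\subset S$, but the paper does not argue that $S$ cannot be strictly larger. Your contrapositive argument, using the dual formula $\gamma_i(a\mid\tau)=\rho_\Lambda(a\,\tau_{\Lambda\setminus\{i\}}\tau_{\Lambda^c})\big/\sum_b\rho_\Lambda(b\,\tau_{\Lambda\setminus\{i\}}\tau_{\Lambda^c})$ to show that $j\notin\supt_\Lambda(\omega)$ forces $j\notin\supt_{\{i\}}(\nu)$ for every $i\in\Lambda$ and every $\nu$ with $\nu_{\Lambda^c}=\omega_{\Lambda^c}$, closes this gap cleanly. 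The only point to make explicit is the one you flag at the end: that $j\in\supt_{\{i\}}(\nu)$ is equivalent to the existence of some $a\in\cA$ with $\gamma_i(\cdot\mid\nu)\neq\gamma_i(\cdot\mid\nu^{(j:a)})$, which follows because $\{i\}^c\setminus\{j\}$ would otherwise be a valid dependence set not containing $j$, contradicting the minimality in Definition \ref{D23}.
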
 

\begin{proof}
  Recall that we set  $\gamma_{\{i\}} (\{\om (i)\}| c_i(\om)) =   \rho_{\{i\}} (\om)$.   Further  $ \rho_{\{i\}}(\omega)>0$ for $\om \in \Omega$ since we assumed that $\gamma$ is   positive. For each fixed $ \omega(i),$   $ \omega_{ \{i\}^c } \mapsto \rho_{\{i\}} ( \omega) $ is  a measurable function with respect to   $ \FF_ {\supt_{\{i \} } }$, see \eqref {F1}. 
  For each $\L$, Georgii (1988), \cite{geo88}, shows in the proof of Theorem 1.33 how to determine $ \rho_\L$ in terms  of $\{ \rho_{\{i\}}, i \in \Z^d\}$ such that    for any  measurable function $f$ we have that
  \begin{equation}\label{paris3} \int f(\om) d \mu (\om)=    \int  d \mu (\om_{\L^c}) \sum_{ \om_{\L}\in \cA^{\L} }    f(\om) \rho_\L (\om) , \quad \forall \L \Subset \Z^d, \end {equation}
  where $ \mu$ is any measure on $ \Omega$ so that
$$ \int  f(\om)   d \mu (\om)=  \int d \mu (\om_{\{i\}^c})  \sum_{ \om_{\{i\} }\in \cA}    f(\om) \rho_{i} (\om).  $$
This immediately  shows that    $ \g$ is uniquely determined by $\rho_{\{i \}}$.  
To construct $ \rho_\Lambda$ and to prove (\ref{eq:freiburg1}), one proceeds by induction on $| \Lambda | .$ The case   $|\Lambda|=1$ is trivial. 
Suppose then that $ \rho_{\Lambda_1}$ and $ \rho_{\Lambda_2}$ have been constructed.  Let $ \Lambda$ be the union of two disjoint sets, $ \Lambda = \Lambda_1 \cup \Lambda_2$, $ \Lambda_1 \cap \Lambda_2 = \emptyset$.  Define   \begin{equation}\label{eq:freiburg2}
  \rho_\Lambda (\omega)   = \frac { \rho_{\Lambda_1}(\omega)  } {\sum_{\bar \omega_{\Lambda_1}}    \frac    { \rho_{\Lambda_1}(\bar \omega_{\Lambda_1}  \omega_{\Lambda_1^c} )  }   { \rho_{\Lambda_2}(\bar \omega_{\Lambda_1}  \omega_{\Lambda_1^c})  }  }. 
\end{equation} 
By induction, for any given $ \omega_{ \Lambda_1}, $ $ \omega_{ \Lambda_1^c } \mapsto \rho_{\Lambda_1}(\omega)= 
\rho_{\Lambda_1}(\omega_{ \Lambda_1},  \omega_{ \Lambda^c_1})$ depends
only on $ \supt_{\Lambda_1} (\omega) ,$ and for any given $ \omega_{ \Lambda_2}, $ $ \omega_{ \Lambda_2^c } \mapsto \rho_{\Lambda_2}(\omega_{ \Lambda_2},  \omega_{ \Lambda^c_2})$ only on $ \supt_{\Lambda_2} (\omega) .$ Hence (\ref{eq:freiburg2}) implies that 
for any given $\omega_\L$, the function  $\omega_{ \Lambda} \mapsto \rho_\Lambda (\omega)$ depends, by construction, on the $\sigma -$algebra generated by $\supt_{\Lambda_1} (\omega)  \cup 
\bigcup_{ \bar \om_{\Lambda_1}} \supt_{\Lambda_2} (\bar \om_{\Lambda_1}\omega_{\L_1^c})$.  
Note that in general     $\supt_{\Lambda_1} (\omega)\cap \supt_{\Lambda_2} (\omega) \neq \emptyset$.   Therefore the value of $\omega_{\Lambda_1}$  might be relevant for determining   $\supt_{\Lambda_2} (\omega)$ and the  value of $\omega_{\Lambda_2}$  might be relevant for determining   $\supt_{\Lambda_1} (\omega)$.   To   have a function 
$\rho_\Lambda (\omega)$ measurable for any choice of $ \omega_\Lambda$ we  set
 $$  \supt_{\Lambda} (\omega)=  \left [ \cup_{\omega_{\Lambda_1}}   \cup_{\omega_{\Lambda_2}} \left (\supt_{\Lambda_1} (\omega)\cup \supt_{\Lambda_2} (\omega) \right) \right ] \setminus  (\Lambda_1 \cup \Lambda_2).    $$
In this way, for any choice of  $\omega_\L,$ $  \rho_\Lambda (\omega) $  is   $ \FF_{ \supt_{\Lambda}} -$measurable.  
 It is immediate to verify by induction that one  has 
  $$  \supt_{\Lambda} (\omega)= \left [  \cup_{\omega_\Lambda} \cup_{i \in \Lambda} \supt_{\{i\}}(\omega)  \right ]  \setminus \Lambda.    $$
  We need to show that \eqref {paris3} holds. By induction, taking in account that $ \om = (\om_{\L_k}, \om_{\L_k^c}),$
  \begin{equation}\label{paris4}  \int f(\om) d \mu (\om)=    \int  d \mu (\om_{\L _k^c}) \sum_{  \om_{\L_k}}    f(\om) \rho_{\L _k}( \om),  \quad k=1,2  \end {equation}
holds.   To show that this holds for $\rho_\Lambda  $ take a positive  measurable function $f$ defined on $\Omega$.  We have 
  \begin{eqnarray*} 
&& \int d \mu (\bar \om) \sum_{\om_\L}  f(\om_\L \bar \om_{\L^c} )     \\
&& = \int d  \mu (\bar \om)  \sum_{\om_{\L_2} } \sum_{\om_{\L_1} }  f(\om_{\L_1}\om_{\L_2} \bar \om_{\L^c}  ) 
\\
&&  =\int d  \mu (\bar \om)   \sum_{\om_{\L_2} }  \rho_{\L _2}(\om_{\L_2}\bar \om_{\L_2^c}) \rho^{-1}_{\L _2}(\om_{\L_2}\bar \om_{\L_2^c})   \sum_{\om_{\L_1} }  f(\om_{\L_1}\om_{\L_2} \bar \om_{\L^c}  ).
\end  {eqnarray*}  
But applying \eqref{paris4} first to $\L_2, $ then to $\L_1$, this last line can be written as
 \begin{eqnarray*} 
&&   \int d  \mu (\bar \om)   \sum_{\om_{\L_2} }  \rho_{\L _2}(\om_{\L_2}\bar \om_{\L_2^c}) \rho^{-1}_{\L _2}(\om_{\L_2}\bar \om_{\L_2^c})   \sum_{\om_{\L_1} }  f(\om_{\L_1}\om_{\L_2} \bar \om_{\L^c}  ) 
 \\
&& =  \int d  \mu (\bar \om) \,   \rho^{-1}_{\L _2}( \bar \om)    \left [ \sum_{\om_{\L_1} }  f(\om_{\L_1} \bar \om_{\L_1^c}  ) \right ] 
   \\
&& =    \int d  \mu (\bar \om)  \sum_{ \tilde  \om_{\L_1}} \rho_{\L_1} (  \tilde  \om_{\L_1} \bar  \om_{\L_1^c}  )  \rho^{-1}_{\L _2}( \tilde  \om_{\L_1} \bar  \om_{\L_1^c})   \left [ \sum_{\om_{\L_1} }  f(\om_{\L_1} \bar \om_{\L_1^c}  ) \right ]  \\
&& =  \int d  \mu (\bar \om)   \sum_{\om_{\L_1} }  f(\om_{\L_1} \bar \om_{\L_1^c}  )   \rho_{\L_1} (\om_{\L_1} \bar \om_{\L_1^c}  )  \rho^{-1}_{\L_1} (\om_{\L_1} \bar \om_{\L_1^c}  )  
 \left [\sum_{ \tilde  \om_{\L_1}} \rho_{\L_1} (  \tilde  \om_{\L_1} \bar  \om_{\L_1^c}  )  \rho^{-1}_{\L _2}( \tilde  \om_{\L_1} \bar  \om_{\L_1^c})  \right ]
\\ 
&& =  \int d  \mu (\bar \om)   \sum_{\om_{\L_1} }  f(\om_{\L_1} \bar \om_{\L_1^c}  )  \rho_{\L_1} ( \tilde  \om_{\L_1} \bar  \om_{\L_1^c}) 
  \rho_\L^{-1}( \tilde  \om_{\L_1} \bar  \om_{\L_1^c}) \\&&
  = \int d  \mu (  \om_{\L_1^c})   \sum_{\om_{\L_1} }  f(\om)  \rho_{\L_1} (\om) 
  \rho_\L^{-1}(\om) .      
\end  {eqnarray*}  
Applying once more \eqref{paris4}, 
we obtain 
$$ 
    \int d  \mu (  \om_{\L_1^c})   \sum_{\om_{\L_1} }  f(\om)  \rho_{\L_1} (\om) 
  \rho_\L^{-1}(\om)  
     = \int d \mu ( \om) f (\om )  \rho_\L^{-1} (\om ).      
$$
By applying the above equality to $ f(\om) \rho_\L (\om ) $ instead of $f (\om) $  we get  the result.    The above definition of $\rho_\L$ depends on the choice of $\L_1$ and $\L_2$; one needs to obtain an unambiguous definition of $\rho_\L$ to choose a definite strategy to exhausting $\L$ site by site. 
\end{proof}

\subsection{Continuation of the example 
\ref{ex:ex2}}

\noindent
{\bf Continuation of example \ref{ex:ex2}}\\
We prove formula (\ref{eq:oho}), using   (\ref{eq:oho2}). First note that $  i \in \Gamma_j (\omega)  $ implies that $ \| i - j \| \le L .$ Now 
if $i \in \Gamma_j (\omega ), $ we have two cases. Either $i \in \hat \Gamma^1_i ( \omega ) ,$
in which case $\Gamma_i^1 (\omega ) = \Gamma_j^1 (\omega) .$  Or $ i \in \partial \Gamma_j^1 (\omega ) .$ Then $\omega(i)i = 1,$ and in this case,
$ i \in \hat \Gamma_j (\omega^i) .$ Then the same arguments as above show that 
$$ \Gamma^1_j (\omega^i ) =  \Gamma_i^1 (\omega )  .$$
Hence 
$$  \left[ \bigcup_{j \in \Z^2} \{ \Gamma^1_j (\omega ) : i \in \Gamma_j (\omega) \}
  \cup \bigcup_{j \in \Z^2} \{ \Gamma^1_j (\omega^i ) : i \in \Gamma_j (\omega^i) \}
\right] =  \Gamma_i^1 (\omega )  .$$
Finally, by definition of $ \Gamma_j (\omega), $ 
\begin{multline*}
 \left[ \bigcup_{j \in \Z^2} \{ \Gamma_j (\omega ) : i \in \Gamma_j (\omega) \}
  \cup \bigcup_{j \in \Z^2} \{ \Gamma_j (\omega^i ) : i \in \Gamma_j (\omega^i) \}
\right] \\
= \left[ \bigcup_{j \in \Z^2} \{ \Gamma^1_j (\omega ) : i \in \Gamma_j (\omega) \}
  \cup \bigcup_{j \in \Z^2} \{ \Gamma^1_j (\omega^i ) : i \in \Gamma_j (\omega^i) \}
\right]  \cap  \bigcup_{ j \in V_i (L) } V_j (L) \\
= \Gamma_i^1 (\omega) \cap V_i(2L) .
\end{multline*} 
This concludes the proof.
{\hfill $\bullet$ \vspace{0.25cm}}

We close with the following version of Bernstein's inequality obtained by Friedman (1975),  for discrete-time martingales having bounded jumps, see for instance Dzhaparidze and van Zanten\cite{vanzanten}.

\noindent
\begin{lemma}\label{le:bernstein}
Let $M_n= \xi_1 + \ldots + \xi_n$ be a discrete martingale with respect to some filtration $({\cal F}_n)_{n \ge 0} $ having bounded jumps $|\xi_n| \le a .$ Let 
$$ <M>_n = \sum_{i = 1}^n E ( \xi_i^2 | {\cal F}_{i-1})   .$$
Then
$$ P ( \max_{k \le n } |M_k| > z ; <M>_n \le L ) \le 2 \exp \left( - \frac12 \left( \frac{z^2}{L} + \frac{a z }{3} \right) \right) .$$
 \end{lemma}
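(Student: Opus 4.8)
The plan is to reproduce the classical exponential--supermartingale argument of Freedman (see \cite{vanzanten}). It suffices to prove the one-sided estimate
$$\P\big(\exists\,k\le n:\ M_k\ge z,\ \langle M\rangle_n\le L\big)\ \le\ \exp\!\Big(-\frac{z^{2}/2}{L+az/3}\Big),$$
because $-M=(-\xi_1)+\cdots+(-\xi_n)$ is again a martingale with jump bound $a$ and with the same predictable quadratic variation $\langle M\rangle$; applying the one-sided estimate to $-M$ as well and taking a union bound produces both the factor $2$ in front of the exponential and the passage from $M_k$ to $|M_k|$.

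First I would control the conditional exponential moments. Fix $\lambda\in(0,3/a)$. Since $x\mapsto(e^{x}-1-x)/x^{2}$ is nondecreasing on $\R$, for $|x|\le\lambda a$ one has $e^{x}-1-x\le a^{-2}(e^{\lambda a}-1-\lambda a)\,x^{2}$; applying this conditionally with $x=\lambda\xi_i$ and using the martingale property $\E(\xi_i\mid{\cal F}_{i-1})=0$ gives
$$\E\big(e^{\lambda\xi_i}\,\big|\,{\cal F}_{i-1}\big)\ \le\ 1+\psi(\lambda)\,\E(\xi_i^{2}\mid{\cal F}_{i-1})\ \le\ \exp\!\big(\psi(\lambda)\,\E(\xi_i^{2}\mid{\cal F}_{i-1})\big),\qquad \psi(\lambda):=\frac{e^{\lambda a}-1-\lambda a}{a^{2}}.$$
Because $\langle M\rangle_n=\sum_{i\le n}\E(\xi_i^{2}\mid{\cal F}_{i-1})$ is ${\cal F}_{n-1}$-measurable, the process $Z_n:=\exp\!\big(\lambda M_n-\psi(\lambda)\langle M\rangle_n\big)$ is adapted and nonnegative, and the display above, together with $\langle M\rangle_n-\langle M\rangle_{n-1}=\E(\xi_n^{2}\mid{\cal F}_{n-1})$, shows $\E(Z_n\mid{\cal F}_{n-1})\le Z_{n-1}$, so $(Z_n)$ is a supermartingale with $\E Z_0=1$.

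Next I would introduce the stopping time $\tau=\inf\{k\ge1:\ M_k\ge z\}$ and apply optional stopping at the bounded time $\tau\wedge n$, which gives $\E Z_{\tau\wedge n}\le1$. On the event $\{\tau\le n,\ \langle M\rangle_n\le L\}$ one has $M_\tau\ge z$ and, since $\langle M\rangle$ is nondecreasing and $\tau\le n$, also $\langle M\rangle_\tau\le\langle M\rangle_n\le L$; hence $Z_{\tau\wedge n}=Z_\tau\ge e^{\lambda z-\psi(\lambda)L}$ there, and Markov's inequality yields $\P(\exists\,k\le n:\ M_k\ge z,\ \langle M\rangle_n\le L)\le e^{-\lambda z+\psi(\lambda)L}$. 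Finally I would optimise over $\lambda$: using the elementary bound $\psi(\lambda)\le\frac{\lambda^{2}/2}{1-\lambda a/3}$ for $0\le\lambda<3/a$ and the choice $\lambda=z/(L+az/3)$, the exponent becomes $-z^{2}/\big(2(L+az/3)\big)$, giving the claimed Bernstein-type bound for $\P(\exists\,k\le n:M_k\ge z,\langle M\rangle_n\le L)$, after which the two-sided statement of the lemma follows as explained at the outset.

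The argument is essentially routine once the exponential supermartingale is in place, so I do not expect a genuine obstacle; the one point deserving care — and the first thing I would verify — is the stopping step, where the hypothesis only bounds $\langle M\rangle$ at the deterministic time $n$ whereas the quantity entering $Z_\tau$ is $\langle M\rangle$ at the random crossing time $\tau$. This is precisely why one truncates at $\tau\wedge n$ and invokes monotonicity of $\langle M\rangle$ to obtain $\langle M\rangle_\tau\le\langle M\rangle_n\le L$ on the relevant event. A secondary routine check is the predictability of $\langle M\rangle_n$, which is what legitimises pulling it out of the conditional expectation in the supermartingale computation.
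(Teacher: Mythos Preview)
The paper does not actually prove this lemma: it is stated in the appendix without proof, with a reference to Freedman (1975) and to Dzhaparidze and van Zanten \cite{vanzanten}. Your proposal reproduces precisely the exponential--supermartingale argument of Freedman, and the steps are correct: the conditional moment bound via the monotonicity of $x\mapsto(e^x-1-x)/x^2$, the supermartingale property of $Z_n=\exp(\lambda M_n-\psi(\lambda)\langle M\rangle_n)$, optional stopping at $\tau\wedge n$, and the optimisation in $\lambda$ are all standard and sound, and your remark about needing $\langle M\rangle_\tau\le\langle M\rangle_n$ on $\{\tau\le n\}$ is exactly the right point to flag.

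One thing worth noting: the exponent you obtain, $-\dfrac{z^2/2}{L+az/3}$, is the classical Freedman/Bernstein bound, whereas the lemma as printed in the paper has $-\dfrac12\Big(\dfrac{z^2}{L}+\dfrac{az}{3}\Big)$, which is a different (and in fact stronger, hence suspicious) expression. This is almost certainly a typographical slip in the paper --- the form you prove is the one that actually appears in \cite{vanzanten} and in Freedman's original paper, and it is the form consistent with how the lemma is invoked later. So your argument is correct and supplies exactly the proof the paper omits.
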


Eva L\"ocherbach

CNRS UMR 8088

D\'epartement de Math\'ematiques

Universit\'e de Cergy-Pontoise

95 000 CERGY-PONTOISE,  France

email: {\tt eva.loecherbach@u-cergy.fr}

\bigskip

  Enza Orlandi
 
Dipartimento di Matematica

Universit\`a  di Roma Tre

 L.go S.Murialdo 1, 00146 Roma,  Italy. 

email: {\tt orlandi@mat.uniroma3.it}

\begin{thebibliography}{99}
  

\bibitem{Az} Azencott, R.
{\it Image analysis and Markov fields. }
ICIAM 87: Proceedings of the first international conference on industrial and applied mathematics (Paris 1987),  SIAM Philadelphia, PA.  53-61 (1987).

\bibitem{Be} Besag, J.
{\it Spatial interaction and the statistical analysis of lattice systems.}
J. Ro.  Stat. Soc. Ser. B 36,   192-236 (1974).


\bibitem{comets} Comets, F. {\it On Consistency of a Class of Estimators for Exponential Families of Markov Random Fields on the Lattice.} Ann. Statist. 20, No. 1, 455-468 (1992).

\bibitem{CT} Csisz\`ar, I.,Talata, Z.
{\it Consistent estimation of the basic neighborhood of Markov random fields. }
Ann. Statist. 34, No. 1, 123-145 (2006).

\bibitem{CTbis} Csisz\`ar, I.,Talata, Z. 
{\it Context tree estimation for not necessarily finite memory
processes, via BIC and MDL.}
IEEE Trans. Inform. Theory 52, 1007-1016 (2006).

\bibitem{dedecker} Dedecker, J. 
{\it Exponential inequalities and functional central limit theorems for a random field.} ESAIM, Probab. Stat. 5, 77-104 (2001). 

\bibitem{DDG} Dereudre, D., Drouilhet, R., Georgii, H.O.
{\it Existence of Gibbsian point processes with geometry-dependent interactions.}  
arxiv.org/abs/1003.2875 (2010).
 
\bibitem{DL} Dereudre, D., Lavancier, F.
{\it Practical simulation and estimation for Gibbs Delaunay-Voronoi tessellations with geometric hardcors interaction.}
Comp. Statistics and Data Analysis 55, 498-519 (2011). 

 \bibitem{D1}  Dobrushin, R. L.,    {\it   Gibbsian random fields for lattice systems with pairwise interactions.}  Funct. Anal. Appl. 2, 292-301,  (1968).
 
\bibitem{D2}  Dobrushin, R. L.,    {\it  The problem of uniqueness of  Gibbs  random field  and the problem of phase transition.}  Funct. Anal. Appl. 2, 302-312,  (1968).
 
\bibitem{vanzanten} Dzhaparidze, K., van Zanten, J.H. {\it On Bernstein-type inequalities for martingales.} Stochastic Processes Appl. { 93}, 109-117 (2001).

\bibitem{EL} Efros, A.A., Leung, T.K. 
{\it Texture Synthesis by Non-parametric sampling.} IEEE International Conference on Computer Vision, Corfu, Greece, September 1999 (1999). 


\bibitem{fedotov} Fedotov, A.,  Harremo{�e}s, P. and Tops{\o}e, F. {\it Refinements of Pinsker's Inequality.} IEEE Trans. 
Inf. Theory, vol. 49, pp. 1491--1498 (2003).

\bibitem{ferrariwyner} Ferrari, F.F., Wyner, A. {\it  Estimation of general stationary processes by variable length Markov Chains.} Scand. J. Stat. 30, No. 3, 459--480 (2003).

\bibitem{finesso} Finesso L., Liu C.C., Narayan P. {\it The optimal error exponent for Markov order estimation.} IEEE Trans. Inf. Theory, vol. 42, pp. 1488--1497 (1996).

 \bibitem{galvesleonardi} Galves, A., Leonardi, F.
{\it Exponential inequalities for empirical unbounded context trees. }
Sidoravicius, Vladas (ed.) et al., In and out of equilibrium 2. Papers celebrating the 10th edition of the Brazilian school of probability (EBP), Rio de Janiero, Brazil, July 30 to August 4, 2006. Basel: Birkh\"auser. Progress in Probability 60, 257-269 (2008). 

\bibitem{GL} Galves, A.,   L\"ocherbach, E.   {\it  Stochastic chains with memory of variable length.} ``Festschrift in honour of the 75th birthday of Jorma Rissanen", Tampere University Press, 2008, arxiv.org/abs/0804.2050.   


\bibitem{GMS} Galves, A.,   Maume-Deschamps,  V.,   Schmitt, B   {\it   Exponential inequalities for VLMC empirical trees.}   
  ESAIM Prob. Stat., vol 12, pp.219-229 (2008).
 
 
\bibitem{GOT} Galves, A.,   Orlandi, E.,   Takahashi, D.Y   {\it   Identifying interacting pairs of sites in infinite range Ising models.}   
 arxiv.org/abs/1006.0272   

\bibitem{geo88} Georgii, Hans-Otto
{\it Gibbs measures and phase transitions}. De Gruyter Studies in Mathematics, 9. Berlin etc.: Walter de Gruyter. (1988). 

\bibitem {gidas} Gidas, B. 
{\it  Parameter estimation for Gibbs distributions from fully observed data} In {\it Markov Random Fields: Theory and Application }  471-498. Academic Press, Boston 

\bibitem{Gido} Gidofalvi, G. 
{\it Texture Synthesis by
Non-parametric Sampling / Image Quilting
for Texture Synthesis $\&$ Transfer} Seminar CSE 291: Seminar on Vision $\&$ Learning, Department of Computer Science and Engineering, Universy of California, San Diego, cseweb.ucsd.edu/classes/fa01/cse291/TextureSynthesis.ppt .

\bibitem{Gr}Grunwald P. D. {\it  The minimum description length principle.}  MIT Press (2007). 


\bibitem{JS} Ji, C.,  Seymour, L. 
{\it A consistent  model selection procedure for  Markov random fields based on penalized pseudolikelihood.}  
Ann. App. Probab. 6,  423-443 (1996).

\bibitem{merhav} Merhav, N., Gutman, M., Ziv, J. {\it On the estimation of the order of a Markov chain and universal data compression.} 
IEEE Trans. Inf. Theory, vol. 35, pp. 1014--1019 (1989).


\bibitem{P} Presutti, E. {\it Scaling Limits in Statistical Mechanics and Microstructures in Continuum Mechanics.}
Springer Berlin Heidelberg, Series: Theoretical and Mathematical Physics,
(2009).

\bibitem{R} Rissanen, J. {\it   A  universal data compression system.}
IEEE,  Trans. Inform. Theory,{ 29}, 656-664 
(1983).


\end{thebibliography}
 \end{document}